\pdfoutput=1

\documentclass{shinyart}

\usepackage[utf8]{inputenc}

\usepackage{shinybib}

\usepackage{enumitem}
\usepackage{autonum}

\usepackage{asymptote}
\begin{asydef}
    import graph;
    import math;
\end{asydef}

\usepackage{graphicx}
\graphicspath{{./pics/}}
\usepackage{tikz,pgfplots}
\usepgfplotslibrary{colorbrewer}
\pgfplotsset{compat=newest}
\pgfplotsset{plot coordinates/math parser=false}
\newcommand{% cheap draft mode
    \input{.tikz}% 
}[1]{% cheap draft mode
    \input{#1.tikz}% 
}

% === macros ===
\newcommand{\eps}{\varepsilon}
\newcommand{\N}{\mathbb{N}}
\newcommand{\R}{\mathbb{R}}
\newcommand{\extR}{\overline{\R}}

\newcommand{\norm}[1]{\|#1\|}
\newcommand{\adaptnorm}[1]{\left\|#1\right\|}
\newcommand{\iprod}[2]{\langle #1,#2\rangle}
\newcommand{\wkto}{\rightharpoonup}
\DeclareMathOperator{\sign}{\mathrm{sign}}
\DeclareMathOperator{\dom}{\mathrm{dom}}
\DeclareMathOperator{\Id}{\mathrm{Id}}
\DeclareMathOperator{\interior}{int}
\DeclareMathOperator{\conv}{conv}
\DeclareMathOperator{\graph}{Graph}
\newcommand{\prox}{\mathrm{prox}}
\newcommand{\proj}{\mathrm{proj}}
\newcommand{\polar}[1]{#1^{\circ}}

\newcommand{\inv}[1]{#1^{-1}}
\newcommand{\grad}[1]{#1'}
\newcommand{\hess}[1]{#1''}
\newcommand{\freevar}{\,\boldsymbol\cdot\,}
\newcommand{\union}\cup
\newcommand{\defeq}{:=}
\newcommand{\downto}{\searrow}
\newcommand{\setto}{\rightrightarrows}
\newcommand{\subdiff}{\partial}
\newcommand{\ind}{\iota}
\newcommand{\noise}{\delta}
\newcommand{\theData}{y^\noise}
\newcommand{\target}{y^d}

\def\DerivConeSym{V}
\newcommand{\DerivCone}[1][\relax]{\DerivConeSym\ifx\relax#1\relax\else(#1)\fi}
\newcommand{\DerivConeX}[2][]{\DerivConeSym_{#2}\ifx\relax#1\relax\else(#1)\fi}
\newcommand{\DerivConeF}[1][]{\DerivConeX[#1]{\subdiff F^*}}
\newcommand{\DerivConeGSimple}[1][]{\DerivConeX[#1]{\bar G}}
\newcommand{\DerivConeFSimple}[1][]{\DerivConeX[#1]{\bar F}}
\newcommand{\frechetCod}[1][]{\widehat D^*_{#1}}

\def\opt#1{\widetilde #1}
\def\realopt#1{\widehat #1}
\def\dir#1{\Delta #1}
\def\alt#1{#1'}
\def\Primal{u}
\def\Dual{v}
\def\coPrimal{\xi}
\def\coDual{\eta}

\def\coFdVar{\zeta}
\def\dcVar{z}

\def\baseu{{\bar{u}}}
\def\realoptq{{\realopt{q}}}
\def\realoptu{{\realopt{u}}}
\def\realoptpsi{{\realopt{\Dual}}}
\def\thisq{{{q}^i}}
\def\thisu{{{u}^i}}
\def\nextq{{{q}^{i+1}}}
\def\nextu{{{u}^{i+1}}}
\def\overnextu{{{\bar u}^{i+1}}}
\def\nextpsi{{{\Dual}^{i+1}}}

\def\thisoptq{{\opt{q}^i}}
\def\thisoptu{{\opt{u}^i}}
\def\thisoptpsi{{\opt{\Dual}^i}}
\def\nextrealoptq{{\realopt{q}^{i+1}}}

\def\thisrealoptq{{\realopt{q}^i}}

\def\somesetmap{R}
\def\somesetmapfindim{R}
\def\MMax{\Theta}
\def\MMin{\theta}
\def\Rad{R}
\def\lipnum{\ell}
\newcommand{\lip}[1]{\lipnum_{#1}}
\def\lipopt{\lip{\inv H_\realoptu}}
\def\MCond{\kappa}
\def\accel{\mu}
\newcommand{\kvbound}{b}
\def\fsquaredpart{\beta}

% ==============
\hyphenation{non-smooth}

\addbibresource{pdex2.bib}

\title{Primal-dual extragradient methods for nonlinear nonsmooth PDE-constrained optimization}
\author{%
    Christian Clason\thanks{Faculty of Mathematics, University Duisburg-Essen, 45117 Essen, Germany (\email{christian.clason@uni-due.de})}
    \and 
    Tuomo Valkonen\thanks{Department of Mathematical Sciences, University  of Liverpool, UK (\email{tuomov@iki.fi})}
}
\date{March 15, 2017}

\hypersetup{
    pdftitle={Primal-dual extragradient methods for nonlinear nonsmooth PDE-constrained optimization},
    pdfauthor={Christian Clason, Tuomo Valkonen},
    pdfkeywords={primal-dual extragradient, PDE-constrained optimization}
}

\begin{document}

\maketitle

\begin{abstract}
    We study the extension of the Chambolle--Pock primal-dual algorithm to nonsmooth optimization problems involving nonlinear operators between function spaces. Local convergence is shown under technical conditions including metric regularity of the corresponding primal-dual optimality conditions. We also show convergence for a Nesterov-type accelerated variant provided one part of the functional is strongly convex.
    We show the applicability of the accelerated algorithm to examples of inverse problems with $L^1$ and $L^\infty$ fitting terms as well as of state-constrained optimal control problems, where convergence can be guaranteed after introducing an (arbitrary small, still nonsmooth) Moreau--Yosida regularization. This is verified in numerical examples.
\end{abstract}

\section{Introduction}

This work is concerned with the numerical solution of optimization problems of the form
\begin{equation}
    \label{eq:prob_convex}
    \min_{u\in X} F(K(u))+G(u),
\end{equation}
where $F:Y\to\extR\defeq \R\cup\{+\infty\}$ and $G:X\to\extR$ are proper, convex, and lower semicontinuous functionals, and $K:X\to Y$ is a (nonlinear) Fréchet-differentiable operator between two Hilbert spaces $X$ and $Y$ with locally Lipschitz-continuous derivative $\grad K$. 
Such problems arise for example in inverse problems with nonsmooth discrepancy or regularization terms or in optimal control problems subject to state or control constraints. We are particularly interested in the situation where $K$ is a \emph{nonlinear} operator involving the solution of a partial differential equation and $F$ is a \emph{nonsmooth} discrepancy or tracking term. 

To fix ideas, a prototypical example is the $L^1$ fitting problem
\begin{equation}
    \label{eq:l1fit_problem}
    \min_{u\in L^2(\Omega)} \norm{S(u) - \theData}_{L^1} + \frac{\alpha}2 \norm{u}_{L^2}^2,
\end{equation}
i.e., $G(u) = \frac{\alpha}2\norm{u}_{L^2}^2$, $F(y) = \norm{y}_{L^1}$, and $K(u)=S(u) - \theData$, where $S$ maps $u$ to the solution $y$ of $-\Delta y+uy=f$ for given $f$ and $\theData$ is a given noisy measurement; see \cite{ClasonJin:2011}. This problem occurs in parameter identification from data corrupted by impulsive noise instead of the usual Gaussian noise. Other examples are the $L^\infty$ fitting problem from \cite{Clason:2012} or optimal control with state constraints; see \cref{sec:application} for details.

One possible approach to solving \eqref{eq:prob_convex} is to apply a Moreau--Yosida regularization to the nonsmooth functional $F$, which allows deriving classical first-order necessary optimality conditions that can be solved by a semismooth Newton method in function spaces; see, e.g. \cite{Kunisch:2008a,Ulbrich:2011} for semismooth Newton methods in general as well as \cite{ClasonJin:2011,Clason:2012,ItoKunischState} for their application to $L^1$ fitting, $L^\infty$ fitting, and optimal control with state constraints, respectively. Such methods are very efficient due to their superlinear convergence and mesh independence; however, they suffer from local convergence, with the convergence region depending strongly on the choice of the Moreau--Yosida parameter. For this reason, usually continuation methods are employed where a sequence of minimization problems with gradually diminishing parameter are solved, although the range of parameter values for which convergence can be observed is still limited in practice.

An alternative approach which has become very popular in the context of imaging problems are primal-dual extragradient methods. 
One widely used example, introduced in \cite{Pock_PD_2010} for linear operators and extended in \cite{Valkonen:2014} to nonlinear operators, applied to \eqref{eq:prob_convex} reads as follows.

\begin{algorithm}
    \begin{algorithmic}[1]
        \State choose $u^0,v^0$
        \For {$i=0,\dots$}
        \State $u^{i+1} = \prox_{\tau G}(u^i - \tau K'(u^i)^*v^i)$
        \State $\bar u^{i+1} = 2u^{i+1}-u^i$
        \State $v^{i+1} = \prox_{\sigma F^*}(v^{i} + \sigma K(\bar u^{i+1}))$
        \EndFor
    \end{algorithmic}
    \caption{Nonlinear primal-dual extragradient method}\label{alg:nlpdegm}
\end{algorithm}
Here, $\sigma,\tau>0$ are appropriately chosen step lengths, $K'(u)^*$ denotes the adjoint of the Fréchet derivative of $K$, and $\prox_{F^*}$ denotes the proximal mapping of the Fenchel conjugate of $F$; we postpone precise definitions to later and only remark that if $F^*$ is the indicator function of a convex set $C$, the proximal mapping coincides with the metric projection onto $C$.
Such methods do not require (for linear $K$) choosing the initial guess sufficiently close to the solution to ensure convergence or solving---possibly ill-conditioned---linear systems in each iteration.
Consequently, they recently have received increasing interest also in the context of optimal control; see, e.g., \cite{Khoroshilova,Kalise}.
In addition, other proximal point methods for optimal control problems have been treated in \cite{Azhmyakov} and \cite{Borzi}; in particular, the latter is concerned with classical forward--backward splitting for sparse control of linear elliptic PDEs.
However, so far these methods have only been considered in the finite-dimensional setting, i.e., after discretizing \eqref{eq:prob_convex}, or for specific (linear) problems. 
One of the goals of this work is therefore to show convergence of \cref{alg:nlpdegm} in Hilbert spaces and to demonstrate that it can be applied to problems of the form~\eqref{eq:l1fit_problem}. 

While the general convergence theory is a straightforward extension of the analysis in \cite{Valkonen:2014} (in fact, the proof is virtually identical), it requires verifying a set-valued Lipschitz property---known as the \emph{Aubin} or \emph{pseudo-Lipschitz property}---on the inverse of a monotone operator $H_{\realopt\Primal}$ encoding the optimality conditions. This is also called \emph{metric regularity} of $H_{\realopt\Primal}$. This verification is significantly more involved in infinite dimensions. For problems of the form \eqref{eq:prob_convex} where $F$ and $G$ are given by integral functionals for regular integrands, we can apply the theory from \cite{ClasonValkonen15} to obtain an explicit, verifiable, condition for metric regularity to hold. While our analysis will show that for problems such as \eqref{eq:l1fit_problem}, this condition does in fact not hold in general unless a Moreau--Yosida regularization is introduced---or the data $y^\delta$ and the fitting term are finite-dimensional---we do obtain convergence for arbitrarily small regularization parameter, and numerical examples show that this can be observed in practice independent of the discretization. 
Similarly, although for nonlinear operators, the convergence is only local since smallness conditions on the distance to the solution enter via bounds on the nonlinearity of the operator, in contrast to semismooth Newton methods we actually observe convergence for any starting point and arbitrarily small regularization parameter without the need for continuation. 

In addition, Moreau--Yosida regularization results in a strongly convex functional, which can be exploited for accelerating \cref{alg:nlpdegm} as in \cite{ChambollePock:2014} via adaptive step length and extrapolation parameters. This leads to the following iteration.

\begin{algorithm}
    \begin{algorithmic}[1]
        \State choose $u^0,v^0$
        \For {$i=0,\dots$}
        \State $u^{i+1} = \prox_{\tau G}(u^i - \tau K'(u^i)^*v^i)$
        \State $\omega_i = 1/\sqrt{1+2\accel\sigma_i}, \quad \tau_{i+1}=\tau_i/\omega_i, \quad \sigma_{i+1} = \sigma_{i}\omega_i$ \label{alg:nlpdegm-accel:stepsize}
        \State $\bar u^{i+1} = u^{i+1}+\omega_i(u^{i+1}-u^i)$
        \State $v^{i+1} = \prox_{\sigma F^*}(v^{i} + \sigma K(\bar u^{i+1}))$
        \EndFor
    \end{algorithmic}
    \caption{Accelerated nonlinear primal-dual extragradient method}\label{alg:nlpdegm-accel}
\end{algorithm}
Here, $\accel\geq 0$ is a fixed acceleration parameter; setting $\accel=0$ coincides with the unaccelerated \cref{alg:nlpdegm}.
The appropriate choice for $\accel>0$ is related to the constant of strong convexity of $F^*$, and in the convex case  yields the optimal convergence rate of $\mathcal{O}(1/k^2)$ for the functional values rather than the rate $\mathcal{O}(1/k)$ for the original version; see \cite{Pock_PD_2010,ChambollePock:2014,ValkonenPock:2015}.
A similar acceleration is possible if $G$ is strongly convex by swapping the roles of $\sigma_i$ and $\tau_i$ in \cref{alg:nlpdegm-accel:stepsize}; we will refer to both variants as \cref{alg:nlpdegm-accel} in the following.
Such an acceleration was not considered in \cite{Valkonen:2014}. While a proof of the optimal convergence rate is outside the scope of this work, we show that \cref{alg:nlpdegm-accel} converges (locally) in infinite-dimensional Hilbert spaces under the same conditions as for \cref{alg:nlpdegm} and demonstrate the accelerated convergence in numerical examples.

\bigskip

This work is organized as follows. In the remainder of this section, we summarize some notations and definitions necessary for what follows. \Cref{sec:convergence} is concerned with the convergence analysis of the accelerated \cref{alg:nlpdegm-accel} in infinite-dimensional Hilbert spaces, where we discriminate the case of $F^*$ (\cref{sec:convergence:F}) or $G$ (\cref{sec:convergence:G}) being strongly convex. We also briefly address the verification of metric regularity for functionals of the form \eqref{eq:prob_convex} in \cref{sec:convergence:metric}. A more detailed discussion for the specific case of the motivating problems ($L^1$ fitting, $L^\infty$ fitting, and optimal control with state constraints) is given in \cref{sec:application}, where we also derive the explicit form of the accelerated \cref{alg:nlpdegm-accel} in these cases. \Cref{sec:results} concludes with numerical examples for the three model problems.

\subsection{Notation and definitions}

\paragraph{Convex analysis}
We assume $G: X \to \extR$ and $F: Y \to \extR$ to be convex, proper, lower semicontinuous functionals on Hilbert spaces $X$ and $Y$, satisfying $\interior \dom G\ne \emptyset$ and $\interior \dom F \ne \emptyset$. 
We call, e.g., $F$ strongly convex with constant $\gamma_{F} > 0$ if
\begin{equation}\label{eq:stronglyconvex}
    F(\alt{\Dual})-F(\Dual) \ge \iprod{z}{\alt{\Dual}-\Dual} + \frac{\gamma_{F}}{2}\norm{\alt{\Dual}-\Dual}^2
    \quad
    (\Dual, \alt{\Dual} \in Y;\, z \in \subdiff F(\Dual)),
\end{equation}
where $\partial F$ denotes the convex subdifferential of $F$.
We denote by 
\begin{equation}
    F^*:Y^*\to\extR,\qquad F^*(p) = \sup_{y\in Y}\ \iprod{p}{y}_Y - F(y),
\end{equation}
the \emph{Fenchel conjugate} of $F$, which is convex, proper, and lower semicontinous. As usual, we identify the topological dual $Y^*$ of $Y$ with itself. 
The \emph{Moreau--Yosida} regularization of $F$ for the parameter $\gamma>0$ is defined as
\begin{equation}\label{eq:my_def}
    F_\gamma(y) \defeq \min_{y' \in Y}~ F(y') + \frac{1}{2\gamma}\norm{y'-y}^2,
\end{equation}
whose Fenchel conjugate is (cf., e.g., \cite[Prop.~13.21\,(i)]{Bauschke:2011})
\begin{equation}\label{eq:my_conjugate}
    F_\gamma^*(p) = F^*(p) + \frac{\gamma}{2}\norm{p}^2.
\end{equation}
Note that $F^*_\gamma$ is strongly convex with constant at least $\gamma$.

For convex $F,G$ and continuously Fréchet-differentiable $K$, we can apply the calculus of Clarke's generalized derivative (which reduces to the convex subdifferential for convex functionals; see, e.g., \cite[Chap.~2.3]{Clarke}) to deduce for \eqref{eq:prob_convex} the overall system of critical point conditions
\begin{equation}
    \label{eq:oc-v2}
    \left\{ \begin{aligned}
            K(\realoptu) &\in \subdiff F^*(\realoptpsi),\\
            - \grad K(\realoptu)^* \realoptpsi &\in \subdiff G(\realoptu).
    \end{aligned} \right.
\end{equation}
\Cref{alg:nlpdegm} can be derived from these conditions with the help of the \emph{proximal mapping} (or \emph{resolvent}) of $G$,
\begin{equation}
    \prox_G(v) = \arg\min_{w\in X} \frac12\norm{w-v}_X^2 + G(w) = (\Id + \partial G)^{-1}(v),
\end{equation}
and similarly for $F^*$.
We recall the following useful calculus rules for proximal mappings, e.g., from \cite[Prop.~23.29\,(i),\,(viii)]{Bauschke:2011}:
\begin{enumerate}[label=(\textsc{p}\arabic*),ref=\textsc{p}\arabic*]
    \item For any $\sigma>0$ it holds that
        \begin{equation}
            \prox_{\sigma F^*}(v) = v - \sigma\, \prox_{\sigma^{-1} F}(\sigma^{-1}v).
        \end{equation}
    \item\label{enum:proxrule2} For any $\gamma>0$ it holds that
        \begin{equation}
            \prox_{F_\gamma^*}(v) = \prox_{(1+\gamma)^{-1}F^*}((1+\gamma)^{-1} v).
        \end{equation}
\end{enumerate}

\paragraph{Set-valued analysis}
We first define for $U \subset X$ the set of \emph{Fréchet (or regular) normals} to $U$ at $u\in U$ by 
\[
    \widehat N(u; U)
    \defeq
    \left\{
        z \in X
        \,\middle|\,
        \limsup_{U \ni \alt{u} \to u}\frac{\iprod{z}{\alt{u}-u}}{\norm{\alt{u}-u}} \le 0
    \right\}
\]
and the set of \emph{tangent vectors} by 
\begin{equation}
    \label{eq:tangent}
    T(u; U) \defeq \left\{ z \in X \,\middle|\, \text{exist } \tau^i \downto 0 \text{ and } u^i \in U \text{ such that } z=\lim_{i \to \infty} \frac{u^i-u}{\tau^i} \right\}.
\end{equation}
For a convex set $U$, these coincide with the usual normal and tangent cones of convex analysis.

For any cone $\DerivCone \subset X$, we also define the \emph{polar cone}
\begin{equation}
    \label{eq:polar}
    \polar \DerivCone \defeq \{ \dcVar \in X \mid \iprod{\dcVar}{\dcVar'} \le 0 \text{ for all } \dcVar' \in \DerivCone\}.
\end{equation}

We use the notation $\somesetmap: Q \setto W$ to denote a set-valued mapping $\somesetmap$ from $Q$ to $W$; i.e., for every $q \in Q$ holds $\somesetmap(q) \subset W$. For $\somesetmap: Q \setto W$, we define the domain $\dom \somesetmap \defeq \{q\in Q\mid\somesetmap(q)\neq \emptyset\}$ and graph $\graph\somesetmap \defeq \{(q,w)\subset Q\times W\mid w\in\somesetmap(q)\}$. 
The regular coderivatives of such maps are defined graphically with the help of the normal cones. 
Let $Q$ and $W$ be Hilbert spaces, and
$\somesetmap: Q \setto W$ with $\dom \somesetmap \ne \emptyset$. 
We then define the \emph{regular coderivative} 
$\frechetCod \somesetmap(q | w): W \setto Q$ of $\somesetmap$ at $q\in Q$ for $w\in W$ as 
\begin{equation}
    \label{eq:frechetcod}
    \frechetCod \somesetmap(q | w)(\dir{w}) \defeq
    \left\{ \dir{q} \in Q \mid (\dir{q}, -\dir{w}) \in \widehat N((q, w); \graph \somesetmap) \right\}.
\end{equation}
We also define the \emph{graphical derivative} $D\somesetmap(q|w): Q \setto W$ by
\begin{equation}
    \label{eq:graphderiv-0}
    D \somesetmap(q | w)(\dir{q}) \defeq
    \left\{ \dir{w} \in W \mid (\dir{q}, \dir{w}) \in T((q, w); \graph \somesetmap) \right\}
\end{equation}
and its convexification $\widetilde{D \somesetmapfindim}(q|w)$ via
\[
    \graph \widetilde{D \somesetmapfindim}(q|w)
    =\conv \graph[D \somesetmapfindim(q|w)].
\]

Finally, we say  that the set-valued mapping $\somesetmap: Q \setto W$ is \emph{metrically regular} at $\realopt w$ for $\realopt q$ if $\graph \somesetmap$ is locally closed and there exist $\rho, \delta, \lipnum > 0$ such that
\begin{equation}
    \label{eq:inverse-aubin}
    \inf_{p\,:\, w \in \somesetmap(p)} \norm{q-p}
    \le \lipnum \norm{w-\somesetmap(q)}
    \quad\text{ for any $q,w$ such that }
    \norm{q-\realopt{q}} \le \delta,
    \,
    \norm{w-\realopt{w}} \le \rho
    .
\end{equation}
We note that metric regularity of $\somesetmap$ is equivalent to the \emph{Aubin property} of $\inv\somesetmap$.
Hence, for the sake of consistency with \cite{ClasonValkonen15}, we denote the infimum over valid constants $\lipnum$ by $\lip{\inv \somesetmap}(\realopt w|\realoptq)$,
or $\lip{\inv \somesetmap}$ for short when there is no ambiguity about the point $(\realopt w, \realoptq)$. 
Metric regularity is then equivalent to $\lip{\inv \somesetmap}(\realopt w|\realoptq)>0$.

\section{Convergence}\label{sec:convergence}

We now show the convergence in infinite-dimensional Hilbert spaces of \cref{alg:nlpdegm-accel}, where the acceleration is stopped at some iteration $N$.
We begin by observing from the definition of the proximal mapping that \cref{alg:nlpdegm-accel} may be written in the form
\begin{equation}
    \label{eq:prox-update-discrepancy}
    0 \in H_{\thisu}(\nextq) + \nu^i + M_{i}(\nextq-\thisq)
\end{equation}
for the family, over a base point $\baseu\in X$, of monotone operators
\begin{equation}
    \label{eq:h-def}
    H_\baseu(u, \Dual) \defeq
    \begin{pmatrix}
        \subdiff G(u) + \grad K(\baseu)^* \Dual \\
        \subdiff F^*(\Dual) -\grad K(\baseu) u - c_\baseu
    \end{pmatrix}
    \quad
    \text{where}
    \quad
    c_\baseu \defeq K(\baseu)-\grad K(\baseu)\baseu,
\end{equation}
the preconditioning operator
\[
    M_i \defeq
    \begin{pmatrix}
        \tau_i^{-1} \Id & -\grad K(u^i)^* \\
        -\grad K(u^i) & \sigma_i^{-1} \Id
    \end{pmatrix},
\]
and the discrepancy term
\[
    \nu^i \defeq \bar \nu^i + \nu^i_\omega \defeq
    \underbrace{
        \begin{pmatrix}
            0 \\
            \grad K(u^i) \overnextu + c_{\thisu} - K(\overnextu)
        \end{pmatrix}
    }_{\text{linearization discrepancy}}
    +
    \underbrace{
        \begin{pmatrix}
            0 \\
            (1-\omega_i) \grad K(u^i)(\nextu-\thisu)
        \end{pmatrix}
    }_{\text{acceleration discrepancy}}.
\]
Observe (or see \cite[Lem.~3.2]{Valkonen:2014}) that $|\nu^i|\le C|u^{i+1}-u^i|$ for some constant $C>0$. This is the only property needed from $\nu^i$ for the convergence proof. Therefore $\nu^i$ can also incorporate further discrepancies, e.g., from inexact evaluation of $K$ which can be useful in the context of PDE-constrained optimization.

\begin{subequations}
    \label{eq:constants}
    Throughout, we set
    \[
        q=(u, \Dual) \in X \times Y 
        \quad\text{and}\quad
        w=(\coPrimal, \coDual) \in X \times Y,
    \]
    extending this notation to $\realoptq$, etc., in the obvious way.
    Here we fix $\Rad > 0$ such that there exists a solution $\realoptq$ to
    \begin{equation}
        \label{eq:rad}
        0 \in H_{\realoptu}(\realoptq) \quad \text{with} \quad \norm{\realoptq} \le \Rad/2.
    \end{equation}
    Note that the condition $0 \in H_{\realoptu}(\realoptq)$ is equivalent to the necessary optimality condition \eqref{eq:oc-v2} for \eqref{eq:prob_convex}. 
    Regarding the operator $K: X \to Y$ and the step length parameters $\sigma_i, \tau_i > 0$, we require that $K$ is Fréchet-differentiable with locally Lipschitz-continuous derivative $\grad K$ satisfying
    \begin{equation}
        \label{eq:ass-k}
        \sigma_i\tau_i\left(\sup_{\norm{u} \le \Rad}\norm{\grad K(u)}^2\right) < 1.
    \end{equation}
    Observe that $\sigma_i\tau_i=\sigma_0\tau_0$ is maintained under acceleration schemes such as the one in \cref{alg:nlpdegm-accel}; it is therefore sufficient to ensure this condition for the initial choice. 
    We denote by $L_2$ the local Lipschitz factor of $u \mapsto \grad K(u)$ on the
    closed ball $B(0, \Rad) \subset X$.
    We define the uniform condition number
    \begin{equation+} 
        \label{eq:mcond}
        \MCond \defeq \MMax/\MMin
    \end{equation+}
\end{subequations}
based on $\MMax$ and $\MMin$ from the condition
\begin{equation}
    \label{eq:m-bounds}
    \MMin^2 \Id \le M_i \le \MMax^2 \Id.
\end{equation}
If $\tau_i,\sigma_i>0$ are constant, $\norm{u^i} \le \Rad$,
and \eqref{eq:ass-k} holds, such $\MMin$ and $\MMax$ exist \cite[Lem.~3.1]{Valkonen:2014}.
Easily this extends to $0 < C_1 \le \tau_i,\sigma_i \le C_2 < \infty$.
\begin{remark}
    The bound \eqref{eq:m-bounds}, on which the analysis from \cite{Valkonen:2014} depends, is the reason we need to stop the acceleration: Since $\tau_i \to 0$ and $\sigma_i \to \infty$, no uniform bound exists for $M_i$ if the acceleration is not stopped. Possibly the convergence proofs from \cite{Valkonen:2014} can be extended to the fully accelerated case, but such an endeavour is outside the scope of the present work. In numerical practice, in any case, we stop the algorithm---and hence a fortiori the acceleration---at a suitable iteration $N$.
\end{remark}

We now distinguish whether $F^*$ or $G$ is strongly convex. The former is always guaranteed by Moreau--Yosida regularization of $F$, while the latter---if it holds in addition, which is the case in the examples considered here---might allow stronger acceleration, independent of the Moreau--Yosida parameter.
In both cases, the convergence proof follows closely the original proof in \cite[\S\,2--3]{Valkonen:2014}. Although this was stated in finite-dimensional spaces, none of the arguments rely on this fact. Aside from the inverse mapping theorem for set-valued functions extracted from \cite[Lem.~3.8]{Valkonen:2014}, which holds in general complete metric spaces, the arguments are entirely algebraic manipulations. They therefore hold in infinite-dimensional Hilbert spaces as well. 

Some modifications are, however, necessary since the accelerated step sizes are no longer constant.
The original proof starts with a basic descent inequality obtained from the monotonicity of $H$ and assumes strong convexity properties. It then modifies this inequality through a sequence of lemmas to obtain an estimate from which a generic telescoping result quickly  produces convergence \cite[Thm.~2.1]{Valkonen:2014}. In the following, we detail the first two elementary steps which contain changes to the original proof (for $F^*$ strongly convex only the second step changes, for $G$ strongly convex both do). The remaining steps heavily employ the metric regularity of $H$ and are unchanged and therefore only summarized briefly. 

\subsection{Convergence for strongly convex \texorpdfstring{$\scriptstyle F^*$}{F*}}\label{sec:convergence:F}

We begin by considering the case of $F^*$ being strongly convex, which is closest to the setting of \cite{Valkonen:2014}. In this case, we chose for $\accel\geq 0$ the acceleration sequence 
\begin{equation}
    \label{eq:sigma-accel-rule}
    \sigma_{i+1} \defeq \omega_i \sigma_i
    \quad\text{and}\quad \tau_{i+1} \defeq \tau_i/\omega_i
    \quad\text{with}\quad \omega_i \defeq 1/\sqrt{1+2\accel\sigma_i}.
\end{equation}

Under the above assumptions, and if metric regularity holds for $H_{\realoptu}$, \cref{alg:nlpdegm-accel} locally converges to a solution of \eqref{eq:rad}.

We begin from the basic descent estimate obtained from the monotonicity of $H_{\thisu}$ and the assumed strong convexity.
\begin{lemma}[\protect{\cite[Lem.~2.1]{Valkonen:2014}}]
    \label{lem:convergence:descent_estim}
    Let $\thisq \in X \times Y$ and $\baseu \in X$.
    Suppose $\nextq \in X \times Y$ solves \eqref{eq:prox-update-discrepancy} 
    and that $\thisoptq\in X\times Y$ is a solution to
    \begin{equation}
        0\in H_{\thisu}(\thisoptq) + \nu^i.
    \end{equation}
    If $F^*$ is strongly convex on $Y$ with constant $\gamma_{F^*}>0$, then we have 
    \begin{equation}
        \label{eq:linear-descent-strong-fstar}
        \norm{\thisq-\thisoptq}_{M_i}^2
        \ge \norm{\nextq-\thisq}_{M_i}^2
        + \norm{\nextq-\thisoptq}_{M_i}^2
        + \gamma_{F^*} 
        \norm{\nextpsi-\thisoptpsi}^2.
        \tag{$\widehat{\mathrm{D}^2}$-loc-$\gamma$-F$^*$}
    \end{equation}
\end{lemma}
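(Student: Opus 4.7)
The plan is to run the standard three-point telescoping argument for monotone-operator proximal steps, adapted to the saddle-point operator $H_{\thisu}$. The structural ingredients I will exploit are monotonicity of $\subdiff G$, the skew pair $\pm\grad K(\thisu)$ off-diagonal in $H_{\thisu}$, strong monotonicity of $\subdiff F^*$ with constant $\gamma_{F^*}$ (from \eqref{eq:stronglyconvex} summed at the two subgradient selections), and positive definiteness of $M_i$ from \eqref{eq:m-bounds}.

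First I would extract matching subgradient selections: the inclusion \eqref{eq:prox-update-discrepancy} for $\nextq$ produces some $\coFdVar^{i+1}\in H_{\thisu}(\nextq)$ with $\coFdVar^{i+1}=M_i(\thisq-\nextq)-\nu^i$, and the inclusion defining $\thisoptq$ produces $\opt{\coFdVar}^{i}=-\nu^i \in H_{\thisu}(\thisoptq)$. Subtracting cancels $\nu^i$ and yields
\[
    \coFdVar^{i+1}-\opt{\coFdVar}^{i} = M_i(\thisq-\nextq).
\]
Pairing with $\nextq-\thisoptq$, the $\subdiff G$ contribution is nonnegative by monotonicity, the linear $\pm\grad K(\thisu)$ cross terms cancel exactly (opposite signs in the two rows of $H_{\thisu}$), and summing the two strong-convexity inequalities for $\subdiff F^*$ at the selections in $\subdiff F^*(\nextpsi)$ and $\subdiff F^*(\thisoptpsi)$ contributes at least $\gamma_{F^*}\norm{\nextpsi-\thisoptpsi}^2$, so
\[
    \iprod{M_i(\thisq-\nextq)}{\nextq-\thisoptq}\ge\gamma_{F^*}\norm{\nextpsi-\thisoptpsi}^2.
\]

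Second, since $M_i$ is symmetric and positive definite by \eqref{eq:m-bounds}, it induces an inner product on $X\times Y$ in which the polarization identity
\[
    \norm{\thisq-\thisoptq}_{M_i}^2 = \norm{\thisq-\nextq}_{M_i}^2+\norm{\nextq-\thisoptq}_{M_i}^2+2\iprod{M_i(\thisq-\nextq)}{\nextq-\thisoptq}
\]
applies. Substituting the previous bound delivers \eqref{eq:linear-descent-strong-fstar} (the direct calculation in fact yields the improved constant $2\gamma_{F^*}$, of which the lemma records only $\gamma_{F^*}$).

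I do not anticipate a substantive obstacle: each step is a direct algebraic manipulation. The only point that demands care is the exact cancellation of the $\grad K(\thisu)$ cross terms, which depends on both inclusions being linearized at the same base point $\thisu$; this is indeed what the hypothesis provides, and is the reason the argument would not go through for generic $H_{\baseu}$ with $\baseu\ne\thisu$.
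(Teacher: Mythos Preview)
Your argument is correct and is exactly the standard proof underlying the cited \cite[Lem.~2.1]{Valkonen:2014}: extract matching selections from the two inclusions, pair their difference with $\nextq-\thisoptq$, use monotonicity of $\subdiff G$, skew-adjointness of the $\pm\grad K(\thisu)$ block (and cancellation of $c_{\thisu}$ and $\nu^i$), strong monotonicity of $\subdiff F^*$, and the three-point identity in the $M_i$-inner product. The paper does not reprove the lemma but only cites it; your observation about the sharper constant $2\gamma_{F^*}$ is precisely the content of the Remark following the lemma.
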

\begin{remark}
    Strong convexity of $F^*$ with factor $\gamma_{F^*}$ is equivalent \cite{Hiriart-Urruty:1993} to strong monotonicity of $\subdiff F^*$ in the sense that
    \[
        \iprod{\subdiff F^*(\alt{\Dual})-\subdiff F^*(\Dual)}{\alt{\Dual}-\Dual} \ge \gamma_{F^*}\norm{\alt{\Dual}-\Dual}^2
        \qquad(\alt{\Dual},\Dual \in Y),
    \]
    observing that there is no factor $1/2$ in the latter unlike mistakenly written at \cite[the end of page 7]{Valkonen:2014}. Hence the slight difference in the statement of \eqref{eq:linear-descent-strong-fstar} in comparison to the similarly-named equation in \cite{Valkonen:2014}. In the cited article, the exact factors make no difference however; in the present work they do for the acceleration.
\end{remark}

Note that \eqref{eq:linear-descent-strong-fstar} still uses the old norm $\norm{\freevar}_{M_i}$ for the new iterate. To pass to  $\norm{\freevar}_{M_{i+1}}$ under acceleration requires replacing \cite[Lem.~3.6]{Valkonen:2014}.
For this, we first need the following bound on the step lengths.
\begin{lemma}
    \label{lemma:step-lengths_F}
    If $\{\sigma_i\}_{i\in\N}$ satisfies \eqref{eq:sigma-accel-rule},
    then $\accel+(\inv\sigma_i - \inv\sigma_{i+1}) \ge 0$. 
\end{lemma}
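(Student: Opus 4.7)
The plan is a direct algebraic verification using the explicit form of the update rule. First I would unfold the definitions: by \eqref{eq:sigma-accel-rule},
$\sigma_{i+1} = \omega_i\sigma_i = \sigma_i/\sqrt{1+2\accel\sigma_i}$, so that
\[
    \inv\sigma_{i+1} = \frac{\sqrt{1+2\accel\sigma_i}}{\sigma_i}.
\]
Substituting this into the desired inequality and multiplying through by $\sigma_i>0$, the claim $\accel+(\inv\sigma_i-\inv\sigma_{i+1})\ge 0$ becomes $\accel\sigma_i+1\ge \sqrt{1+2\accel\sigma_i}$.

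Next I would justify squaring both sides. Since $\accel\ge 0$ and $\sigma_i>0$, both sides are nonnegative, so the inequality is equivalent to $(\accel\sigma_i+1)^2\ge 1+2\accel\sigma_i$. Expanding the left-hand side gives $\accel^2\sigma_i^2+2\accel\sigma_i+1$, and cancelling the shared terms leaves $\accel^2\sigma_i^2\ge 0$, which is trivial.

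There is no real obstacle here: the lemma is essentially a bookkeeping identity packaged to be used later when passing from $\norm{\freevar}_{M_i}$ to $\norm{\freevar}_{M_{i+1}}$ and replacing \cite[Lem.~3.6]{Valkonen:2014}. The only point requiring a word of care is the positivity of both sides before squaring, which is immediate from the standing assumption $\accel\ge 0$ together with $\sigma_0>0$ and the fact that the recursion $\sigma_{i+1}=\omega_i\sigma_i$ preserves positivity (as $\omega_i\in(0,1]$).
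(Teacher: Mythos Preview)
Your proof is correct. Both you and the paper reduce the claim to the equivalent inequality $\accel\sigma_i + 1 \ge \sqrt{1+2\accel\sigma_i}$ (the paper writes this as $2\accel\sigma_i + 1 - \inv\omega_i \ge \accel\sigma_i$), but the arguments diverge from there. The paper rewrites this further as $\accel \le (\inv\omega_i - 1)/(\omega_i\sigma_i)$ and then invokes concavity of the square root to obtain the lower bound $\inv\omega_i - 1 \ge \accel\sigma_i\omega_i$. Your approach---squaring both nonnegative sides and reducing to $\accel^2\sigma_i^2 \ge 0$---is more direct and avoids the concavity estimate entirely. It is the cleaner argument here; the paper's route would perhaps generalize more readily to other step-size rules where an exact square is not available, but for the specific rule \eqref{eq:sigma-accel-rule} your method is preferable.
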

\begin{proof}
    We first note that
    \[
        2\accel+(\inv\sigma_i - \inv\sigma_{i+1})
        = \inv\sigma_i(2\accel\sigma_i + 1 - \inv\omega_i).
    \]
    Thus the claim holds if
    \[
        2\accel\sigma_i +1 - \inv\omega_i \ge \accel \sigma_i,
    \]
    i.e., after multiplying both sides by $\omega_i^2$ and using the definition of $\omega_i$,
    \[
        1-\omega_i \ge \accel \omega_i^2 \sigma_i.
    \]
    In other words, we need to show that
    \begin{equation}
        \label{eq:update-c-cond}
        \accel \le \frac{1-\omega_i}{\omega_i^2\sigma_i} = \frac{\inv\omega_i-1}{\omega_i\sigma_i}.
    \end{equation}
    But using the concavity of the square root, we can estimate
    \[
        \inv\omega_i - 1 = (-\sqrt{1})-(-\sqrt{1+2\accel\sigma_i})
        \ge -\frac{1}{2\sqrt{1+2\accel\sigma_i}}(1-(1+2\accel\sigma_i))
        =\accel\sigma_i\omega_i.
    \]
    This proves \eqref{eq:update-c-cond}.
\end{proof}
The following lemma is the crucial step towards extending the results of \cite{Valkonen:2014} to the accelerated case.
\begin{lemma}
    \label{lemma:norm-switch_F}
    Suppose \eqref{eq:constants} and \eqref{eq:linear-descent-strong-fstar} hold.
    Let $\Rad$, $L_2$, $\MCond$ be as in \eqref{eq:constants},
    and choose $\coPrimal_1 \in (0, 1)$. 
    If 
    \begin{equation}
        \label{eq:norm-switch-ass}
        \norm{\thisq-\realoptq} \le \Rad/4
        \quad
        \text{and}
        \quad
        \norm{\thisq-\thisoptq} \le 
        C
    \end{equation}
    for a suitable constant $C=C(\gamma_{F^*}, \accel, \coPrimal_1, \MMin, L_2, \MCond, \Rad)$, then 
    \begin{equation}
        \label{eq:descent-norm-switch}
        \tag{${\widehat{\mathrm{D}}}^2$-M}
        \norm{\thisq-\thisoptq}_{M_i}^2
        \ge \coPrimal_1 \norm{\nextq-\thisq}_{M_i}^2
        + \norm{\nextq-\thisoptq}_{M_{i+1}}^2.
    \end{equation}
    holds.
\end{lemma}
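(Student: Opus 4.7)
The plan is to begin from the strong-convexity descent \eqref{eq:linear-descent-strong-fstar} and convert the $M_i$-norm on the right to an $M_{i+1}$-norm. Write
\[
    \norm{\nextq-\thisoptq}_{M_i}^2
    = \norm{\nextq-\thisoptq}_{M_{i+1}}^2
    + \iprod{(M_i-M_{i+1})(\nextq-\thisoptq)}{\nextq-\thisoptq},
\]
and decompose $M_i-M_{i+1}$ into its diagonal part (coming from the step-size updates $\tau_i\to\tau_{i+1}$, $\sigma_i\to\sigma_{i+1}$) and its off-diagonal part (coming from the change of linearisation point in $\grad K(u^i)\to\grad K(u^{i+1})$). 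Thus the excess $\norm{\nextq-\thisoptq}_{M_i}^2-\norm{\nextq-\thisoptq}_{M_{i+1}}^2$ splits into
\[
    (\inv\tau_i-\inv\tau_{i+1})\norm{\nextu-\thisoptu}^2
    +(\inv\sigma_i-\inv\sigma_{i+1})\norm{\nextpsi-\thisoptpsi}^2
    -2\iprod{[\grad K(u^i)-\grad K(u^{i+1})](\nextu-\thisoptu)}{\nextpsi-\thisoptpsi}.
\]

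For the diagonal part, $\tau_{i+1}\ge\tau_i$ gives a nonnegative primal contribution, which is harmless. The dual contribution is nonpositive, but \cref{lemma:step-lengths_F} yields $\inv\sigma_i-\inv\sigma_{i+1}\ge -\accel$, so the loss is at most $\accel\norm{\nextpsi-\thisoptpsi}^2$. Adding back the strong-convexity surplus $\gamma_{F^*}\norm{\nextpsi-\thisoptpsi}^2$ from \eqref{eq:linear-descent-strong-fstar}, there remains (provided $\accel\le\gamma_{F^*}$, which is implicit in the choice of $\accel$) a nonnegative residual of at least $(\gamma_{F^*}-\accel)\norm{\nextpsi-\thisoptpsi}^2$ to offset the off-diagonal error.

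For the off-diagonal part, the local Lipschitz bound $\norm{\grad K(u^i)-\grad K(u^{i+1})}\le L_2\norm{\nextu-\thisu}$ (valid since $\norm{\thisq-\realoptq}\le\Rad/4$ and $\norm{\realoptq}\le\Rad/2$ keep $\thisu,\nextu$ in $B(0,\Rad)$) gives the pointwise bound $2L_2\norm{\nextu-\thisu}\norm{\nextu-\thisoptu}\norm{\nextpsi-\thisoptpsi}$. The factor $\norm{\nextu-\thisoptu}$ is the one that will have to be made small: from \eqref{eq:linear-descent-strong-fstar} and \eqref{eq:m-bounds} we obtain $\norm{\nextq-\thisoptq}\le\MCond\norm{\thisq-\thisoptq}\le\MCond C$. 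A Young inequality of the form $2ab\le \epsilon a^2+\inv\epsilon b^2$ then splits the off-diagonal term into a piece controlled by $\norm{\nextu-\thisu}^2$ (to be absorbed into the slack $(1-\coPrimal_1)\norm{\nextq-\thisq}_{M_i}^2\ge (1-\coPrimal_1)\MMin^2\norm{\nextu-\thisu}^2$) and a piece controlled by $\norm{\nextpsi-\thisoptpsi}^2$ (to be absorbed into the residual $(\gamma_{F^*}-\accel)\norm{\nextpsi-\thisoptpsi}^2$); choosing $\epsilon$ balanced between these two sides yields a sufficient smallness condition of the form $C\le c(\gamma_{F^*}-\accel)^{1/2}(1-\coPrimal_1)^{1/2}\MMin/(L_2\MCond)$ for a universal $c$.

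The main obstacle is the joint bookkeeping of these competing smallness constants: the same factor $\norm{\nextu-\thisoptu}\le\MCond C$ must be split between the primal and dual absorptions in a way that simultaneously respects the fixed slack $(1-\coPrimal_1)$ on the step term and the fixed slack $(\gamma_{F^*}-\accel)$ on the gap term. The payoff is that provided $C$ is chosen small enough in terms of $\gamma_{F^*}$, $\accel$, $\coPrimal_1$, $\MMin$, $L_2$, $\MCond$, and $\Rad$, the off-diagonal error is fully absorbed and the estimate \eqref{eq:descent-norm-switch} follows.
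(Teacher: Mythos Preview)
Your proposal is correct and follows essentially the same route as the paper: expand $M_i-M_{i+1}$ into diagonal and off-diagonal parts, discard the nonnegative $\tau$-diagonal term, use \cref{lemma:step-lengths_F} to control the $\sigma$-diagonal loss against the strong-convexity surplus, bound the off-diagonal via Lipschitz continuity of $\grad K$, and absorb it by Young's inequality into the $(1-\coPrimal_1)$ slack and the $(\gamma_{F^*}-\accel)$ residual. One small imprecision: the containment $u^{i+1}\in B(0,\Rad)$ does not follow from $\norm{\thisq-\realoptq}\le\Rad/4$ and $\norm{\realoptq}\le\Rad/2$ alone---you also need $\norm{\nextq-\thisq}\le\MCond\norm{\thisq-\thisoptq}\le\MCond C$ from \eqref{eq:linear-descent-strong-fstar}, which forces the additional constraint $C\le\Rad/(4\MCond)$; this is what the dependence of $C$ on $\Rad$ actually buys, and the paper makes it explicit.
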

\begin{proof}
    Using \eqref{eq:norm-switch-ass} and the property $\norm{\realoptq} \le \Rad/2$ from \eqref{eq:rad}, we have
    \begin{equation}
        \label{eq:norm-switch-thisq-est}
        \norm{\thisq} \le 
        \norm{\thisq-\realoptq} + \norm{\realoptq}
        \le 3\Rad/4.
    \end{equation}
    The estimate \eqref{eq:linear-descent-strong-fstar} implies
    \[
        \norm{\nextq-\thisq} \le \MCond \norm{\thisoptq-\thisq}.
    \]
    Choosing $C\le \Rad/(4\kappa)$ and using \eqref{eq:norm-switch-ass} and \eqref{eq:norm-switch-thisq-est}, we thus get
    \[
        \norm{\nextq}
        \le
        \norm{\nextq-\thisq}+\norm{\thisq}
        \le
        \MCond \norm{\thisoptq-\thisq}+\norm{\thisq}
        \le \Rad.
    \]
    As both $\norm{\thisq}\le \Rad$ and $\norm{\nextq} \le \Rad$, by local Lipschitz continuity of $\grad K$ we have again
    \begin{equation}
        \label{eq:norm-switch-k-estim}
        \norm{\grad K(\nextu)-\grad K(\thisu)} \le L_2 \norm{\nextu-\thisu}.
    \end{equation}
    We now expand
    \begin{equation}
        \label{eq:norm-switch-0}
        \begin{aligned}
            \norm{\nextq-\thisoptq}_{M_i}^2 - \norm{\nextq-\thisoptq}_{M_{i+1}}^2
            &
            =
            -2 \iprod{\nextpsi-\thisoptpsi}{(\grad K(\nextu)-\grad K(\thisu))(\nextu-\thisoptu)}
            \\ \MoveEqLeft[-1]
            +
            (\inv\tau_i-\inv\tau_{i+1})\norm{\nextu-\thisoptu}^2
            +
            (\inv\sigma_i-\inv\sigma_{i+1})\norm{\nextpsi-\thisoptpsi}^2
            \\
            &
            \ge
            -2 \iprod{\nextpsi-\thisoptpsi}{(\grad K(\nextu)-\grad K(\thisu))(\nextu-\thisoptu)}
            \\ \MoveEqLeft[-1]
            +
            (\inv\sigma_i-\inv\sigma_{i+1})\norm{\nextpsi-\thisoptpsi}^2.
        \end{aligned}
    \end{equation}
    In the final step, we have used the fact that $\{\tau_i\}_{i\in\N}$ is non-decreasing.
    Using \eqref{eq:norm-switch-k-estim}, we further derive by application of Young's inequality
    \begin{equation}
        \label{eq:norm-switch-1}
        \begin{aligned}[t]
            \norm{\nextq-\thisoptq}_{M_i}^2 - \norm{\nextq-\thisoptq}_{M_{i+1}}^2
            &
            \ge
            (\inv\sigma_i-\inv\sigma_{i+1})\norm{\nextpsi-\thisoptpsi}^2
            \\ \MoveEqLeft[-1]
            -2L_2
            \norm{\nextpsi-\thisoptpsi}\norm{\nextq-\thisq}\norm{\nextq-\thisoptq}.
        \end{aligned}        
    \end{equation}
    Using once more Young's inequality, \eqref{eq:norm-switch-1}, and \cref{lemma:step-lengths_F}, we deduce
    \begin{equation}
        \label{eq:sigma-accel-rule-gamma-effect_F} 
        \begin{aligned}[t]
            \norm{\nextq-\thisoptq}_{M_i}^2  - \norm{\nextq-\thisoptq}_{M_{i+1}}^2
            +\gamma_{F^*}\norm{\nextpsi-\thisoptpsi}^2
            &
            \ge
            (\accel+\inv\sigma_i-\inv\sigma_{i+1})\norm{\nextpsi-\thisoptpsi}^2\\
            \MoveEqLeft[-1]
            -\frac{L_2^2}{\gamma_{F^*}-\accel}\norm{\nextq-\thisq}^2\norm{\nextq-\thisoptq}^2
            \\
            &
            \ge
            -\frac{L_2^2}{\gamma_{F^*}-\accel}\norm{\nextq-\thisq}^2\norm{\nextq-\thisoptq}^2.
        \end{aligned}
    \end{equation}
    By application of \eqref{eq:m-bounds} and \eqref{eq:linear-descent-strong-fstar}, we bound
    \begin{align}
        \norm{\nextq-\thisoptq}^2 & 
        \le \MMin^{-2} \norm{\nextq-\thisoptq}_{M_i}^2
        \le \MCond^2 \norm{\thisq-\thisoptq}^2
        \intertext{and}
        \norm{\nextq-\thisq}^2 & 
        \le \MMin^{-2} \norm{\nextq-\thisq}_{M_i}^2.
    \end{align}
    Setting
    \[
        c \defeq \frac{L_2^2}{\gamma_{F^*}-\accel}
        \quad\text{and}\quad
        C \defeq (1-\coPrimal_1)\frac{\theta^2}{c\kappa^2}
    \]
    and using \eqref{eq:norm-switch-ass} therefore yields
    \[
        c\norm{\nextq-\thisq}^2\norm{\nextq-\thisoptq}^2
        \le \frac{c\MCond^2}{\MMin^2} \norm{\nextq-\thisq}_{M_i}^2 \norm{\thisq-\thisoptq}^2
        \le (1-\coPrimal_1) \norm{\nextq-\thisq}_{M_i}^2.
    \]
    Using \eqref{eq:sigma-accel-rule-gamma-effect_F} and this estimate in \eqref{eq:linear-descent-strong-fstar}, we obtain \eqref{eq:descent-norm-switch}. 
\end{proof}

The remaining proof now proceeds as in \cite{Valkonen:2014}. Metric regularity---whose verification is the main difficulty in function spaces and will be investigated based on the results of \cite{ClasonValkonen15} at the end of this section---allows removing the squares from \eqref{eq:linear-descent-strong-fstar} and bridging from the perturbed local solutions $\thisoptq$ to local solutions $\thisrealoptq$. This is done through a sequence of technical lemmas in \cite[\S\,3.4--3.8]{Valkonen:2014} which culminate in the general descent estimate \eqref{eq:nonlinear-descent-general} of \cite[Thm.~3.1]{Valkonen:2014}. From there, a generic telescoping argument given in \cite[Thm.~2.1]{Valkonen:2014} yields convergence, which we summarize in the following statement.
\begin{theorem}
    \label{thm:convergence_F}
    Let \eqref{eq:constants} be satisfied with the corresponding constants $\Rad$, $\MMax$, $\MCond$ and $L_2$, and suppose $F^*$ is strongly convex with factor $\gamma_{F^*}$.
    Let $\realoptq$ solve $0 \in H_{\realoptu}(\realoptq)$
    and $H_{\realoptu}$ be metrically regular at $0$ for $\realoptq$ with 
    \begin{equation}
        \label{eq:pnl-realoptpsi-bound}
        \lipopt \MCond L_2 \norm{\realoptpsi} < 1-1/\sqrt{1+1/(2\lipopt^2\MMax^4)}.
    \end{equation}

    If $\accel \in [0, \gamma_{F^*})$ and we use the rule \eqref{eq:sigma-accel-rule} for $i=1,\ldots,N$ for some $N\in\N$, after which $\tau_i=\tau_N$ and $\sigma_i=\sigma_N$ for $i>N$, there exists $\delta > 0$ 
    such that for any $q^1 \in X \times Y$ with
    \begin{equation}
        \label{eq:u1-assumption}
        \norm{q^1-\realoptq} \le \delta,
    \end{equation}
    the iterates $\nextq = (\nextu, \nextpsi)$ generated by \cref{alg:nlpdegm-accel} converge to a solution $q^*=(u^*, \Dual^*)$ of~\eqref{eq:oc-v2}.
\end{theorem}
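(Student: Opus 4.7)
The plan is to follow the telescoping strategy of \cite[Thms.~2.1 and 3.1]{Valkonen:2014}, with the accelerated-version modifications already captured by \cref{lemma:step-lengths_F,lemma:norm-switch_F}. The overall structure is inductive: for $\delta>0$ sufficiently small, I expect to show inductively that every iterate satisfies $\norm{\thisq-\realoptq} \le \Rad/4$, so that the local Lipschitz continuity of $\grad K$ on $B(0,\Rad)$, the two-sided bound \eqref{eq:m-bounds} on $M_i$, and the smallness threshold $C$ from \cref{lemma:norm-switch_F} all remain in force throughout. During the accelerated phase $i<N$ the step sizes $\sigma_i,\tau_i$ vary but their product $\sigma_i\tau_i=\sigma_0\tau_0$ is constant, so uniform $\MMin,\MMax$ in \eqref{eq:m-bounds} exist; after iteration $N$ we have $M_i\equiv M_N$ fixed.

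The main work is then to upgrade the perturbed descent inequality \eqref{eq:descent-norm-switch}, which features the auxiliary iterate $\thisoptq$ solving $0 \in H_{\thisu}(\thisoptq)+\nu^i$, to one directly comparing $\thisq$ with $\realoptq$. This is exactly where metric regularity of $H_{\realoptu}$ enters. Combined with the bound $\norm{\nu^i}\le C\norm{\nextu-\thisu}$ and the set-valued inverse mapping lemma \cite[Lem.~3.8]{Valkonen:2014} (which is stated in complete metric spaces and hence dimension-free), it controls $\norm{\thisoptq-\realoptq}$ by the linearization error of $K$ near $\realoptu$. A Young-type manipulation is then used to remove the squares from the cross term featuring $\realoptpsi$; here the hypothesis \eqref{eq:pnl-realoptpsi-bound} is precisely calibrated so that the residual is absorbed into the leading term, yielding an inequality of the schematic form
\[
    \norm{\nextq-\realoptq}_{M_{i+1}}^2
    \le \norm{\thisq-\realoptq}_{M_i}^2
    - c\,\norm{\nextq-\thisq}_{M_i}^2
\]
for some $c>0$ independent of $i$. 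This is the Hilbert-space analogue of \eqref{eq:nonlinear-descent-general} in \cite[Thm.~3.1]{Valkonen:2014}; its proof consists of purely algebraic manipulations apart from the cited inverse mapping lemma, so it transfers without change.

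From here the generic telescoping argument of \cite[Thm.~2.1]{Valkonen:2014} applies verbatim: summing in $i$ yields that $\norm{\nextq-\realoptq}_{M_i}$ is non-increasing (which closes the induction on the $\Rad/4$-ball) and that $\nextq-\thisq\to 0$. Once $i\ge N$ the preconditioner freezes to $M_N$, so the descent becomes genuine Fejér monotonicity in a fixed Hilbert norm; weak subsequential limits of $\nextq$ can then be passed through the proximal updates by the weak--strong closedness of $\subdiff G$ and $\subdiff F^*$ and the continuity of $K,\grad K$ on the bounded ball $B(0,\Rad)$, identifying any such limit as a solution $q^*$ of \eqref{eq:oc-v2}. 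An Opial-type argument applied to the monotonicity of $\norm{\nextq-q^*}_{M_N}$ then promotes this to convergence of the full sequence. The main obstacle, in my view, is the coupled choice of $\delta$ in \eqref{eq:u1-assumption}: it must simultaneously ensure that \eqref{eq:norm-switch-k-estim} is available at every step, that the smallness bound on $\norm{\thisq-\thisoptq}$ required by \cref{lemma:norm-switch_F} holds uniformly over $i$, and that one stays within the metric-regularity radii $\rho,\delta$ of \eqref{eq:inverse-aubin}; the strict inequality \eqref{eq:pnl-realoptpsi-bound} is exactly what makes these three requirements mutually compatible and produces a positive $c$ above.
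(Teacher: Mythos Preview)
Your overall plan---carry the template of \cite{Valkonen:2014} into Hilbert space, with \cref{lemma:step-lengths_F,lemma:norm-switch_F} absorbing the acceleration---matches the paper. But the core descent estimate you quote, and consequently your endgame, diverge from the paper in a way that matters.

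The machinery of \cite[\S3.4--3.8]{Valkonen:2014} does \emph{not} produce the squared Fej\'er-type inequality you write, anchored at the fixed point $\realoptq$. What it actually delivers is the \emph{linear} (unsquared) estimate \eqref{eq:nonlinear-descent-general},
\[
    \norm{\thisq-\thisrealoptq}_{M_i}
    \ge \xi\,\norm{\nextq-\thisq}_{M_i}
    + \norm{\nextq-\nextrealoptq}_{M_{i+1}},
\]
featuring a sequence of \emph{moving} local solutions $\thisrealoptq$ with $0 \in H_{\thisu}(\thisrealoptq)$. Metric regularity and the inverse mapping lemma bridge from the perturbed $\thisoptq$ to these $\thisrealoptq$; bridging all the way to the fixed $\realoptq$ would require control of $\norm{\thisrealoptq-\realoptq}$ in terms of $\norm{\thisu-\realoptu}$, which is precisely what one is still trying to prove, so your displayed inequality is not obviously available.

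The linear form is decisive for the conclusion. Telescoping \eqref{eq:nonlinear-descent-general} gives $\sum_i \norm{\nextq-\thisq}_{M_i}<\infty$, hence $\{\thisq\}$ is \emph{Cauchy} and converges strongly; one then passes to the limit in \eqref{eq:prox-update-discrepancy} using outer semicontinuity of $\subdiff G,\subdiff F^*$ together with the strong continuity of $K,\grad K$. Your Fej\'er/Opial route would yield only weak subsequential limits, and passing a weak limit of $\thisu$ through the nonlinear $K$ (needed to identify the limit as a solution of \eqref{eq:oc-v2}) is not justified by mere continuity on $B(0,\Rad)$; the paper's Cauchy argument sidesteps this entirely.
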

\begin{proof}
    The proof is identical to that of \cite[Thm.~2.1]{Valkonen:2014} and given here for the sake of completeness. 
    Under the given assumptions, we can, for some $\xi>0$, obtain from \eqref{eq:descent-norm-switch} as in the proof of \cite[Thm.~3.1]{Valkonen:2014} the inequality
    \begin{equation}
        \label{eq:nonlinear-descent-general}
        \norm{\thisq-\thisrealoptq}_{M_i}
        \ge \xi \norm{\nextq-\thisq}_{M_i}
        + \norm{\nextq-\nextrealoptq}_{M_{i+1}}.
        \tag{$\realopt{{\mathrm D}}$}
    \end{equation}
    It follows from \eqref{eq:nonlinear-descent-general} that
    \[
        \sum_{i=1}^\infty \norm{\nextq-\thisq}_{M_i} < \infty,
    \]
    and consequently an application of \eqref{eq:m-bounds} shows that
    \begin{equation}
        \sum_{i=1}^\infty \norm{\nextq-\thisq}
        \le \MMax \sum_{i=1}^\infty \norm{\nextq-\thisq}_{M_i} < \infty.
    \end{equation}
    This says that $\{\thisq\}_{i=1}^\infty$ is a Cauchy sequence and hence
    converges to some $\realoptq$.
    It also implies that
    \[
        \norm{M_{i}(\nextq-\thisq)} \le \MMax \norm{\nextq-\thisq} \to 0.
    \]

    Now \cite[Lem.~3.5]{Valkonen:2014} states that under the given assumptions, it follows from \eqref{eq:nonlinear-descent-general} that $\nu^i\to 0$ and hence that
    \[
        z^i \defeq \nu^i + M_{i}(\nextq-\thisq) \to 0.
    \]
    By \eqref{eq:prox-update-discrepancy}, we moreover have $-z^i \in H_{\thisu}(\nextq)$.
    Using $K \in C^1(X; Y)$ and the outer semicontinuity of the subgradient mappings
    $\subdiff G$ and $\subdiff F^*$, we see that
    \[
        \limsup_{i \to \infty} H_{\thisu}(\nextq) \subset H_{\realoptu}(\realoptq).
    \]
    Here the $\limsup$ is in the sense of an outer limit \cite{Rockafellar:1998},
    consisting of the limits of all converging subsequences of elements $v^i \in H_{\thisu}(\nextq)$.
    As by \eqref{eq:prox-update-discrepancy} we have $-z^i \in H_{\thisu}(\nextq)$,
    it follows in particular that $0 \in H_{\realoptu}(\realoptq)$, which is precisely \eqref{eq:oc-v2}.
\end{proof}

\begin{remark}
    \Cref{thm:convergence_F} holds if $F^*$ is merely strongly convex on the ``nonlinear'' subspace
    \begin{equation}
        Y_{NL}:=\{y\in Y: \iprod{z}{K(\cdot)}\in L(X,Y)\}^\bot,
    \end{equation}
    i.e., if \eqref{eq:stronglyconvex} holds merely for all $v,v'\in Y_{NL}$. In this case, $\realoptpsi$ in \eqref{eq:pnl-realoptpsi-bound} can be replaced by $P_{NL}\realoptpsi$, the orthogonal projection of $\realoptpsi$ on $Y_{NL}$.
    Indeed, \cite[Lem.~2.1]{Valkonen:2014} directly applies to $V=Y_{NL}\subsetneq Y$ to yield \eqref{eq:linear-descent-strong-fstar} for $P_{NL}(v^{i+1}-\widetilde v^i)$, and a straightforward modification of \cref{lemma:norm-switch_F} yields \eqref{eq:descent-norm-switch}. Since the Moreau--Yosida regularization, required for metric regularity in our examples, already implies strong convexity on the full space, we do not treat this more general case in detail.
\end{remark}

\subsection{Convergence for strongly convex \texorpdfstring{$\scriptstyle G$}{G}}
\label{sec:convergence:G}

In this case, we chose for $\accel\geq 0$ the acceleration sequence 
\begin{equation}
    \label{eq:tau-accel-rule}
    \sigma_{i+1} \defeq \sigma_i/\omega_i 
    \quad\text{and}\quad \tau_{i+1} \defeq \tau_i\omega_i
    \quad\text{with}\quad \omega_i \defeq 1/\sqrt{1+2\accel\tau_i}.
\end{equation}

Under the above assumptions, and if metric regularity holds for $H_{\realoptu}$, \cref{alg:nlpdegm-accel} converges to a solution of \eqref{eq:rad} as before. First, a trivial modification of the proof of \cite[Lem.~2.1]{Valkonen:2014} yields again the basic descent estimate.
\begin{lemma}
    \label{lem:convergence:descent_estim_g}
    Let $\thisq \in X \times Y$ and $\baseu \in X$.
    Suppose $\nextq \in X \times Y$ solves \eqref{eq:prox-update-discrepancy} 
    and that $\thisoptq\in X\times Y$ is a solution to
    \begin{equation}
        0\in H_{\thisu}(\thisoptq) + \nu^i.
    \end{equation}
    If $G$ is strongly convex on $X$ with constant $\gamma_{G}>0$, then we have 
    \begin{equation}
        \label{eq:linear-descent-strong-g}
        \tag{$\widehat{\mathrm{D^2}}$-loc-$\gamma$-G}
        \norm{\thisq-\thisoptq}_{M_i}^2
        \ge \norm{\nextq-\thisq}_{M_i}^2
        + \norm{\nextq-\thisoptq}_{M_i}^2
        + \gamma_G \norm{\nextu-\thisoptu}^2.
    \end{equation}
\end{lemma}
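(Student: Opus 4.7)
The proof will mirror the proof of \cref{lem:convergence:descent_estim}, with the strong convexity of $G$ playing the role previously played by the strong convexity of $F^*$. The key structural observation is that the bilinear ``saddle'' part of $H_\baseu$, namely $(u,\Dual)\mapsto (\grad K(\baseu)^*\Dual,\,-\grad K(\baseu)u)$, is skew-symmetric, so it contributes nothing to any monotonicity pairing. Consequently, strong convexity of $G$ transfers one-to-one to strong monotonicity of $H_{\thisu}$ in the primal component: for any $(\coPrimal_1,\coDual_1)\in H_{\thisu}(q_1)$ and $(\coPrimal_2,\coDual_2)\in H_{\thisu}(q_2)$,
\[
    \iprod{(\coPrimal_1,\coDual_1)-(\coPrimal_2,\coDual_2)}{q_1-q_2}
    \ge \gamma_G\norm{u_1-u_2}^2,
\]
by strong monotonicity of $\subdiff G$ plus mere monotonicity of $\subdiff F^*$.

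The plan is then to apply this strong monotonicity to the two inclusions in the statement. From \eqref{eq:prox-update-discrepancy} we read off $-\nu^i - M_i(\nextq-\thisq)\in H_{\thisu}(\nextq)$, and from the defining inclusion for $\thisoptq$ we read off $-\nu^i\in H_{\thisu}(\thisoptq)$. Taking the difference and pairing with $\nextq-\thisoptq$ yields
\[
    \iprod{-M_i(\nextq-\thisq)}{\nextq-\thisoptq}
    \ge \gamma_G\norm{\nextu-\thisoptu}^2,
\]
which is the counterpart of the bound on $\norm{\nextpsi-\thisoptpsi}^2$ used in \cref{lem:convergence:descent_estim}.

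To convert this pairing into the three-term $M_i$-norm inequality, I will invoke the three-point polarization identity for the symmetric form $\iprod{M_i\freevar}{\freevar}$:
\[
    \norm{\thisq-\thisoptq}_{M_i}^2
    = \norm{\nextq-\thisq}_{M_i}^2 + \norm{\nextq-\thisoptq}_{M_i}^2
    + 2\iprod{M_i(\thisq-\nextq)}{\nextq-\thisoptq}.
\]
Substituting the strong-monotonicity bound into the cross term gives exactly \eqref{eq:linear-descent-strong-g} (in fact with constant $2\gamma_G$, so the stated $\gamma_G$ is a fortiori valid, just as in the $F^*$ case and in line with the remark following \cref{lem:convergence:descent_estim}).

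There is no genuine obstacle here: the proof is algebraic once one notices that the skew-symmetric part of $H_{\thisu}$ drops out of every monotonicity pairing, so the calculation is identical to that of \cref{lem:convergence:descent_estim} with $(u,G,\gamma_G)$ in the place of $(\Dual,F^*,\gamma_{F^*})$. The statement therefore follows by the ``trivial modification'' announced in the text immediately preceding the lemma.
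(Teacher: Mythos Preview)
Your argument is correct and is precisely the ``trivial modification'' the paper points to: you use strong monotonicity of $\subdiff G$, monotonicity of $\subdiff F^*$, and the skew-symmetry of the $\grad K(\baseu)$-coupling to get partial strong monotonicity of $H_{\thisu}$ in the primal component, then combine this with the three-point identity for the symmetric positive form $\iprod{M_i\freevar}{\freevar}$, exactly as in the $F^*$ case of \cref{lem:convergence:descent_estim}. Your observation that the argument in fact yields $2\gamma_G$ (so $\gamma_G$ holds a fortiori) is also consistent with the remark following \cref{lem:convergence:descent_estim}.
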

Analogously to \cref{lemma:step-lengths_F}, one now derives the following bounds.
\begin{lemma}
    \label{lemma:step-lengths_G}
    Let $\{\tau_i\}_{i\in\N}$ satisfy \eqref{eq:tau-accel-rule}.
    Then $\accel+(\inv\tau_i - \inv\tau_{i+1}) \ge 0$.
\end{lemma}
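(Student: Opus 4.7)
The plan is to carry over the proof of \cref{lemma:step-lengths_F} essentially verbatim, since the acceleration rule \eqref{eq:tau-accel-rule} is obtained from \eqref{eq:sigma-accel-rule} by swapping the roles of $\sigma$ and $\tau$: the recurrence $\tau_{i+1}=\tau_i\omega_i$ with $\omega_i=1/\sqrt{1+2\accel\tau_i}$ has exactly the same structure as $\sigma_{i+1}=\sigma_i\omega_i$ with $\omega_i=1/\sqrt{1+2\accel\sigma_i}$ from the $F^*$-strongly-convex case.

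First, I would rewrite the quantity of interest as
\[
2\accel + (\inv\tau_i-\inv\tau_{i+1}) = \inv\tau_i(2\accel\tau_i + 1 - \inv\omega_i),
\]
and reduce the desired inequality $\accel + (\inv\tau_i-\inv\tau_{i+1}) \ge 0$ to showing $2\accel\tau_i + 1 - \inv\omega_i \ge \accel\tau_i$, which after multiplying through by $\omega_i^2$ and using the definition of $\omega_i$ becomes
\[
1 - \omega_i \ge \accel\omega_i^2 \tau_i,
\qquad\text{i.e.,}\qquad
\accel \le \frac{\inv\omega_i - 1}{\omega_i \tau_i}.
\]

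Then I would establish this bound by concavity of the square root: writing $\inv\omega_i - 1 = \sqrt{1+2\accel\tau_i} - \sqrt{1}$ and applying the tangent-line inequality at $1+2\accel\tau_i$ (in the form used in the proof of \cref{lemma:step-lengths_F}) yields
\[
\inv\omega_i - 1 \ge \frac{1}{2\sqrt{1+2\accel\tau_i}}(2\accel\tau_i) = \accel\tau_i\omega_i,
\]
which is exactly the required inequality.

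I do not expect any obstacle: the derivation is a purely algebraic manipulation, structurally identical to the one already carried out for $\sigma_i$, and the remark preceding the lemma already flags the analogy. The only thing to be careful about is the direction of monotonicity---here $\{\tau_i\}_{i\in\N}$ is non-increasing (rather than non-decreasing as $\{\sigma_i\}$ in the $F^*$ case)---but this plays no role in the algebraic estimate itself; it will matter only later when \cref{lemma:norm-switch_F} is transported to a norm-switching lemma for the $G$-strongly-convex setting, where the sign of $(\inv\sigma_i-\inv\sigma_{i+1})\norm{\nextpsi-\thisoptpsi}^2$ is the one that has to be discarded.
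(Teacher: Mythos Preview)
Your proposal is correct and follows exactly the approach intended by the paper, which simply states that the result is derived analogously to \cref{lemma:step-lengths_F} without spelling out the details. The algebraic steps you give are the direct transcription of that proof with $\tau_i$ in place of $\sigma_i$, and they go through without change.
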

We can now show the main lemma to account for the acceleration in the case of strongly convex $G$.
\begin{lemma}
    \label{lemma:norm-switch_G}
    Suppose \eqref{eq:constants} and \eqref{eq:linear-descent-strong-g} hold.
    Let $\Rad$, $L_2$, $\MCond$ be as in \eqref{eq:constants},
    and choose $\coPrimal_1 \in (0, 1)$. 
    If 
    \begin{equation}
        \norm{\thisq-\realoptq} \le \Rad/4
        \quad
        \text{and}
        \quad
        \norm{\thisq-\thisoptq} \le 
        C
    \end{equation}
    for a suitable constant $C=C(\gamma_{G}, \accel, \coPrimal_1, \MMin, L_2, \MCond, \Rad)$, then \eqref{eq:descent-norm-switch} holds.
\end{lemma}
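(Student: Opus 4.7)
The plan is to mirror the proof of \cref{lemma:norm-switch_F}, with the roles of the primal and dual components of $q$ swapped. Under the acceleration rule \eqref{eq:tau-accel-rule} it is $\tau_i$ that is non-increasing and $\sigma_i$ that is non-decreasing, the exact reverse of the situation in \cref{sec:convergence:F}, and the strong-convexity budget from \eqref{eq:linear-descent-strong-g} now controls $\norm{\nextu-\thisoptu}^2$ in place of $\norm{\nextpsi-\thisoptpsi}^2$. Everything else is algebraically identical, so the statement follows by a sign-for-sign transposition of the earlier argument.

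The first step is unchanged: the hypotheses together with $\norm{\realoptq}\le\Rad/2$ from \eqref{eq:rad} give $\norm{\thisq}\le 3\Rad/4$, and \eqref{eq:linear-descent-strong-g} combined with \eqref{eq:m-bounds} yields $\norm{\nextq-\thisq}\le\MCond\norm{\thisoptq-\thisq}$. Requiring $C\le\Rad/(4\MCond)$ then forces $\norm{\nextq}\le\Rad$, and local Lipschitz continuity of $\grad K$ on $B(0,\Rad)$ supplies the estimate $\norm{\grad K(\nextu)-\grad K(\thisu)}\le L_2\norm{\nextu-\thisu}$.

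Next I would expand $\norm{\nextq-\thisoptq}_{M_i}^2-\norm{\nextq-\thisoptq}_{M_{i+1}}^2$ exactly as in \eqref{eq:norm-switch-0}. Since $\sigma_{i+1}\ge\sigma_i$ under \eqref{eq:tau-accel-rule}, the term $(\inv\sigma_i-\inv\sigma_{i+1})\norm{\nextpsi-\thisoptpsi}^2$ is now non-negative and may be discarded. Adding $\gamma_G\norm{\nextu-\thisoptu}^2$ from \eqref{eq:linear-descent-strong-g} and invoking \cref{lemma:step-lengths_G} in the form $\accel+(\inv\tau_i-\inv\tau_{i+1})\ge 0$ absorbs the $(\inv\tau_i-\inv\tau_{i+1})\norm{\nextu-\thisoptu}^2$ contribution and leaves the positive residual $(\gamma_G-\accel)\norm{\nextu-\thisoptu}^2$, which plays the role that $(\gamma_{F^*}-\accel)\norm{\nextpsi-\thisoptpsi}^2$ did in the $F^*$ case (this uses the standing assumption $\accel<\gamma_G$, matching $\accel\in[0,\gamma_{F^*})$ in \cref{thm:convergence_F}). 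Applying Young's inequality to the cross term $-2\iprod{\nextpsi-\thisoptpsi}{(\grad K(\nextu)-\grad K(\thisu))(\nextu-\thisoptu)}$, pairing $\norm{\nextu-\thisoptu}$ against this residual, produces a remainder of the form $\frac{L_2^2}{\gamma_G-\accel}\norm{\nextq-\thisq}^2\norm{\nextq-\thisoptq}^2$ after using $\norm{\nextpsi-\thisoptpsi}\le\norm{\nextq-\thisoptq}$.

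Finally I would absorb this remainder just as in the $F^*$ proof: bound $\norm{\nextq-\thisoptq}^2\le\MCond^2\norm{\thisq-\thisoptq}^2\le\MCond^2 C^2$ from \eqref{eq:linear-descent-strong-g} and \eqref{eq:m-bounds}, and $\norm{\nextq-\thisq}^2\le\MMin^{-2}\norm{\nextq-\thisq}_{M_i}^2$, so that choosing $C\defeq(1-\coPrimal_1)\MMin^2(\gamma_G-\accel)/(L_2^2\MCond^2)$ intersected with $\Rad/(4\MCond)$ collapses the remainder to $(1-\coPrimal_1)\norm{\nextq-\thisq}_{M_i}^2$ and delivers \eqref{eq:descent-norm-switch}. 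No genuine obstacle arises; the only care required is to track which step size is now the monotone one and which variable carries the strong-convexity budget, so that the discarded and absorbed terms land on the correct side of the chain of inequalities.
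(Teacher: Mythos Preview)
Your proposal is correct and follows essentially the same approach as the paper. The paper's proof likewise discards the $(\inv\sigma_i-\inv\sigma_{i+1})$ term using the monotonicity of $\sigma_i$, applies Young's inequality against the $(\gamma_G-\accel)\norm{\nextu-\thisoptu}^2$ budget to obtain the remainder $\frac{L_2^2}{\gamma_G-\accel}\norm{\nextq-\thisq}^2\norm{\nextq-\thisoptq}^2$ via \cref{lemma:step-lengths_G}, and then concludes verbatim as in \cref{lemma:norm-switch_F}.
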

\begin{proof}
    Proceeding as in the proof of \cref{lemma:norm-switch_F}, since now $\{\sigma_i\}_{i\in\N}$ is non-decreasing, we derive from \eqref{eq:linear-descent-strong-g} instead of \eqref{eq:norm-switch-1} the estimate
    \begin{equation}
        \label{eq:norm-switch-2}
        \begin{aligned}[t]
            \norm{\nextq-\thisoptq}_{M_i}^2 - \norm{\nextq-\thisoptq}_{M_{i+1}}^2
            &
            \ge
            (\inv\tau_i-\inv\tau_{i+1})\norm{\nextu-\thisoptu}^2
            \\ \MoveEqLeft[-1]
            -2L_2
            \norm{\nextq-\thisoptq}\norm{\nextq-\thisq}\norm{\nextu-\thisoptu}.
        \end{aligned}        
    \end{equation}
    Aplying Young's inequality, \eqref{eq:norm-switch-2}, and \cref{lemma:step-lengths_G}, we deduce
    \begin{multline}
        \label{eq:accel-rule-gamma-effect_G}
        \norm{\nextq-\thisoptq}_{M_i}^2  - \norm{\nextq-\thisoptq}_{M_{i+1}}^2
        +\gamma_G\norm{\nextu-\thisoptu}^2
        \\
        \begin{aligned}[t]
            &
            \ge
            (\accel+\inv\tau_i-\inv\tau_{i+1})\norm{\nextu-\thisoptu}^2
            -\frac{L_2^2}{\gamma_G-\accel}\norm{\nextq-\thisq}^2\norm{\nextq-\thisoptq}^2
            \\
            &
            \ge
            -\frac{L_2^2}{\gamma_G-\accel}\norm{\nextq-\thisq}^2\norm{\nextq-\thisoptq}^2.
        \end{aligned}
    \end{multline}
    We now conclude analogously to the proof of \cref{lemma:norm-switch_F}.
\end{proof}

The remaining proof now follows as in the case of strongly convex $F^*$, and we obtain the following convergence result.
\begin{theorem}
    \label{thm:convergence_G}
    Let \eqref{eq:constants} be satisfied with the corresponding constants $\Rad$, $\MMax$, $\MCond$ and $L_2$, and suppose $G$ is strongly convex with factor $\gamma_{G}$.
    Let $\realoptq$ solve $0 \in H_{\realoptu}(\realoptq)$
    and $H_{\realoptu}$ be metrically regular at $0$ for $\realoptq$ with 
    \begin{equation}
        \lipopt \MCond L_2 \norm{\realoptpsi} < 1-1/\sqrt{1+1/(2\lipopt^2\MMax^4)}.
    \end{equation}
    If $\accel \in [0, \gamma_{G})$ and we use the rule \eqref{eq:tau-accel-rule} for $i=1,\ldots,N$ for some $N\in\N$, after which $\tau_i=\tau_N$ and $\sigma_i=\sigma_N$ for $i>N$, there exists $\delta > 0$ 
    such that for any $q^1 \in X \times Y$ with
    \begin{equation}
        \norm{q^1-\realoptq} \le \delta,
    \end{equation}
    the iterates $\nextq = (\nextu, \nextpsi)$ generated by \cref{alg:nlpdegm-accel} converge to a solution $q^*=(u^*, \Dual^*)$ of~\eqref{eq:oc-v2}.
\end{theorem}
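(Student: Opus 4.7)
The plan is to mirror essentially word-for-word the proof of \cref{thm:convergence_F}, since the only structural difference between the two settings is which of $\sigma_i$, $\tau_i$ is now non-decreasing. The starting input has already been prepared: \cref{lemma:norm-switch_G} produces exactly the same descent inequality \eqref{eq:descent-norm-switch} as in the $F^*$-strongly convex case, so from that point onward every subsequent manipulation is formally identical.

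First I would invoke the chain of technical lemmas \cite[\S\,3.4--3.8]{Valkonen:2014}, using metric regularity of $H_{\realoptu}$ at $0$ for $\realoptq$ and the quantitative bound on $\lipopt \MCond L_2 \norm{\realoptpsi}$ to remove the squares from \eqref{eq:descent-norm-switch} and to replace the perturbed local solutions $\thisoptq$ (the object supplied by \cref{lem:convergence:descent_estim_g}) by genuine local solutions $\thisrealoptq$. As in the proof of \cite[Thm.~3.1]{Valkonen:2014} this produces, for some $\xi > 0$ and for all iterates sufficiently close to $\realoptq$, the inequality \eqref{eq:nonlinear-descent-general}, namely
\[
  \norm{\thisq-\thisrealoptq}_{M_i}
  \ge \xi \norm{\nextq-\thisq}_{M_i}
  + \norm{\nextq-\nextrealoptq}_{M_{i+1}}.
\]
It is at this stage where the bound \eqref{eq:pnl-realoptpsi-bound} on $\lipopt \MCond L_2 \norm{\realoptpsi}$ is consumed, exactly as in the $F^*$-strongly convex case.

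Next I would apply the telescoping argument of \cite[Thm.~2.1]{Valkonen:2014} to \eqref{eq:nonlinear-descent-general}: summing over $i$ and exploiting the uniform lower bound \eqref{eq:m-bounds} on $M_i$ (which is available because the acceleration rule \eqref{eq:tau-accel-rule} is only applied for $i \le N$, after which $\sigma_i,\tau_i$ are constant), one obtains $\sum_{i=1}^\infty \norm{\nextq-\thisq}_{M_i} < \infty$, and then $\sum_{i=1}^\infty \norm{\nextq-\thisq} < \infty$ by the upper bound $\MMax$. Hence $\{\thisq\}$ is Cauchy and converges to some limit $q^*$, while also $M_i(\nextq-\thisq)\to 0$. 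Combining with $\nu^i \to 0$ (which follows from \eqref{eq:nonlinear-descent-general} via \cite[Lem.~3.5]{Valkonen:2014}), one gets $z^i := \nu^i + M_i(\nextq-\thisq)\to 0$ with $-z^i \in H_{\thisu}(\nextq)$. Finally, using $K \in C^1(X;Y)$ together with the outer semicontinuity of $\subdiff G$ and $\subdiff F^*$ yields $0 \in H_{\realoptu}(q^*)$, i.e., $q^*$ solves \eqref{eq:oc-v2}. The neighborhood radius $\delta$ is chosen small enough so that the smallness hypotheses of \cref{lemma:norm-switch_G} and of the metric regularity lemmas remain valid along the iteration, as in \cite[Thm.~2.1]{Valkonen:2014}.

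The only place where any substantive care is required is verifying that the direction swap in \eqref{eq:tau-accel-rule} (with $\{\sigma_i\}$ now non-decreasing rather than $\{\tau_i\}$) does not break the monotonicity arguments used to discard the $(\inv\tau_i-\inv\tau_{i+1})$ or $(\inv\sigma_i-\inv\sigma_{i+1})$ terms. This has already been handled inside \cref{lemma:norm-switch_G} and \cref{lemma:step-lengths_G}, and the downstream arguments only use the weakened estimate \eqref{eq:descent-norm-switch}, which is symmetric in the two cases. Hence no real obstacle remains; the rest is algebraic bookkeeping that proceeds verbatim from the proof of \cref{thm:convergence_F}.
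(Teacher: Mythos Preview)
Your proposal is correct and matches the paper's approach exactly: the paper does not even write out a separate proof for \cref{thm:convergence_G}, stating only that ``the remaining proof now follows as in the case of strongly convex $F^*$,'' which is precisely the strategy you describe. Your elaboration of the telescoping and outer-semicontinuity steps simply spells out what the paper leaves implicit by reference to the proof of \cref{thm:convergence_F} and \cite[Thm.~2.1]{Valkonen:2014}.
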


\subsection{Metric regularity}\label{sec:convergence:metric}

We finally address the verification of metric regularity in infinite-dimensional Hilbert spaces required for the convergence of \cref{alg:nlpdegm-accel}. Motivated by the problems considered in the next section, we assume that 
\begin{equation}
    \label{eq:pde-f-choice}
    F^*(\Dual)=\int_\Omega f^*(\Dual(x)) \,d x
\end{equation}
for a proper, convex, lower semicontinuous $f^*$ and (after rescaling $F+G$, see below)
\begin{equation}
    \label{eq:pde-g-choice}
    G(u)=\frac12 \norm{u}_{L^2}^2.
\end{equation}
We wish to apply the results from \cite{ClasonValkonen15}. Towards this end, we consider the Moreau--Yosida regularization \eqref{eq:my_def} of $F$ for some parameter $\gamma>0$, and assume (using \eqref{eq:my_conjugate}) that the convexified graphical derivative of the regularized subdifferential satisfies at least at non-degenerate points for some cone $\DerivConeF[\Dual|\coDual]$ and a pointwise-defined self-adjoint positive semi-definite linear superposition operator $T_\Dual:L^2(\Omega)\to L^2(\Omega)$---i.e., $[T_\Dual \Dual](x)=t_{\Dual(x)}(x) \Dual(x)$ for some $t:\R\to\R$---the expression 
\begin{equation}
    \label{eq:fstar-polar-form}
    \widetilde{D{[\subdiff F^*]}}(\Dual|\coDual)(\dir{\Dual})
    =
    \begin{cases}
        T_\Dual \dir{\Dual} + \polar{\DerivConeF[\Dual|\coDual]}, & \dir{\Dual} \in {\DerivConeF[\Dual|\coDual]}, \\
        \emptyset, & \dir{\Dual} \not\in {\DerivConeF[\Dual|\coDual]}.
    \end{cases}
\end{equation}
Using the sum rule for graphical coderivatives from \cite[Cor.~2.3]{ClasonValkonen15}, we deduce that $\widetilde{D{[\subdiff F_\gamma^*]}}$ has the same type of structure with
\begin{equation}
    \label{eq:fstar-polar-form-huber}
    \widetilde{D{[\subdiff F_\gamma^*]}}(\Dual|\coDual)(\dir{\Dual})
    =
    \widetilde{D{[\subdiff F^*]}}(\Dual|\coDual)(\dir{\Dual}) + \gamma \dir{\Dual}.
\end{equation}

For the Moreau--Yosida regularized problem, we denote the corresponding operator $H_\realoptu$ by $H_{\gamma,\realoptu}$.
Then we have the following result.
\begin{proposition}[\protect{\cite[Prop.~4.3]{ClasonValkonen15}}]
    \label{prop:pde-metric-regularity}
    Assume \eqref{eq:fstar-polar-form} holds and $K \in C^1(X; Y)$.
    Suppose further that $\realoptq$ solves $0 \in H_{\gamma,\realoptu}(\realoptq)$ for some $\bar F \ge 0$.  Then $H_{\gamma,\realoptu}$ is metrically regular at $0$ for $\realoptq$ if and only if $T_\Dual + \gamma I \succeq \fsquaredpart I$ for some $\fsquaredpart > 0$, or
    \begin{equation}
        \label{eq:kvbound-fstar-metric}
        \bar\kvbound(\realoptq|0; H_{\realoptu}) \defeq
        \sup_{t>0} \inf\left\{ \frac{\norm{\grad K(\realoptu)\grad K(\realoptu)^* \dcVar-\nu}}{\norm{\dcVar}}
            \,\middle|\, 
            \begin{array}{l}
                0 \ne \dcVar \in \DerivConeF[\alt{\Dual}|\alt{\coDual}],\, \nu \in \polar{\DerivConeF[\alt{\Dual}|\alt{\coDual}]}, \\
                \alt{\coDual} \in \subdiff F^*(\alt{\Dual}),\, \norm{\alt{\Dual}-\realopt{\Dual}} < t,\\ \norm{\alt{\coDual}-K(\realoptu)} < t
            \end{array}
        \right\}>0.
    \end{equation}
\end{proposition}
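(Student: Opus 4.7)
The plan is to invoke the Mordukhovich coderivative criterion for metric regularity, specialized to the block structure of $H_{\gamma,\realoptu}$ under the assumptions \eqref{eq:pde-f-choice}--\eqref{eq:pde-g-choice}. Concretely, I would first rewrite
\[
    H_{\gamma,\realoptu}(u,\Dual)
    =
    \begin{pmatrix}
        u + \grad K(\realoptu)^*\Dual \\
        \subdiff F_\gamma^*(\Dual) - \grad K(\realoptu)u - c_{\realoptu}
    \end{pmatrix},
\]
so that only the second block is genuinely set-valued. Since metric regularity of the full mapping at $0$ for $\realoptq$ is characterized by the surjectivity modulus of its graphical (co)derivative, I would compute that derivative using the sum rule of \cite[Cor.~2.3]{ClasonValkonen15}, which shows that the contribution from $\partial G$ is simply the identity on the primal block, the coupling is linear in $\grad K(\realoptu)$ and $\grad K(\realoptu)^*$, and the dual block is governed by $\widetilde{D[\subdiff F_\gamma^*]}(\Dual|\coDual)$. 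The structural assumption \eqref{eq:fstar-polar-form} together with the additive rule \eqref{eq:fstar-polar-form-huber} then gives the pointwise expression $T_\Dual + \gamma I$ on the cone $\DerivConeF[\Dual|\coDual]$, plus a polar-cone degeneracy.

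Next, I would reduce the inversion condition on the graphical derivative to a test on pairs $(\dir u,\dir\Dual)$ satisfying $\dir u + \grad K(\realoptu)^*\dir\Dual = 0$ (obtained by eliminating the primal block). Substituting $\dir u = -\grad K(\realoptu)^*\dir\Dual$ into the dual block turns the regularity question into a coercivity estimate for the operator $\dir\Dual \mapsto (T_\Dual + \gamma I)\dir\Dual + \grad K(\realoptu)\grad K(\realoptu)^*\dir\Dual$ on $\DerivConeF[\alt\Dual|\alt\coDual]$ modulo $\polar{\DerivConeF[\alt\Dual|\alt\coDual]}$, uniformly for $(\alt\Dual,\alt\coDual)$ in a neighbourhood of $(\realopt\Dual,K(\realoptu))$ along $\graph\subdiff F^*$.

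From this reduced formulation, the dichotomy in the statement follows naturally. In the first case $T_\Dual + \gamma I \succeq \fsquaredpart I$, coercivity is immediate and purely from the dual block, so metric regularity holds regardless of the coupling. In the second, degenerate case, I would project the estimate onto the kernel-type directions $\dcVar \in \DerivConeF[\alt\Dual|\alt\coDual]$ where $T_\Dual + \gamma I$ provides no help; what remains must come entirely from $\grad K(\realoptu)\grad K(\realoptu)^*\dcVar$, measured modulo $\polar{\DerivConeF[\alt\Dual|\alt\coDual]}$. Quantifying this and taking an infimum produces exactly the quantity $\bar\kvbound(\realoptq|0;H_{\realoptu})$ in \eqref{eq:kvbound-fstar-metric}, and positivity of this number is equivalent to the required coercivity, hence to metric regularity.

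The main obstacle I anticipate is the careful bookkeeping between the linearized dual condition and the original regularity definition \eqref{eq:inverse-aubin}: one needs to pass from a pointwise/cone-based criterion for the convexified graphical derivative to an actual Aubin property in function space, which requires that the superposition structure \eqref{eq:fstar-polar-form} be stable under perturbations of the base point $(\alt\Dual,\alt\coDual)$ within $\graph\subdiff F^*$, and that the supremum over $t > 0$ in \eqref{eq:kvbound-fstar-metric} faithfully captures the neighbourhood parameter $\rho$ in the Aubin definition. The Mordukhovich criterion handles this in finite dimensions, but in the Hilbert-space setting here one must additionally invoke the strong-slope or Ioffe-type criterion (as done in \cite{ClasonValkonen15}) to justify that the graphical-derivative condition is sufficient and not merely necessary.
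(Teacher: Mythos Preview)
Your outline is a correct reconstruction of the argument underlying \cite[Prop.~4.3]{ClasonValkonen15}: the coderivative criterion, the block reduction eliminating the primal variable, and the resulting dichotomy between uniform coercivity of $T_\Dual+\gamma I$ and the residual lower bound $\bar\kvbound$ are exactly the ingredients of that proof, and you correctly flag the infinite-dimensional subtlety handled there via an Ioffe-type slope criterion.

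The paper, however, does not reprove any of this. Its proof consists of a single observation: the cited result in \cite{ClasonValkonen15} was stated only for $T_\Dual=0$, and the sole place this restriction is used is \cite[Lem.~4.1]{ClasonValkonen15}. The paper therefore supplies in \cref{app:lem41} a generalized version of that lemma (\cref{lemma:general-polar-projection-lower-bound-general}) covering arbitrary self-adjoint $T_\Dual\succeq 0$, after which the original proof goes through verbatim. So where you propose to rebuild the entire coderivative argument, the paper isolates the one technical step that actually changes and patches only that---a much shorter route, at the cost of relying wholesale on the machinery of \cite{ClasonValkonen15}.
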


\begin{proof}
    In \cite[Prop.~4.3]{ClasonValkonen15}, we actually take $T_\Dual=0$. 
    However, the only place where this specific structure is used is \cite[Lem.~4.1]{ClasonValkonen15}. In \cref{lemma:general-polar-projection-lower-bound-general} in \cref{app:lem41}, we have updated the sufficient conditions of the former to be able to deal with general $T_\Dual \succeq 0$.
\end{proof}

This implies convergence for any choice of the Moreau--Yosida regularization parameter $\gamma > 0$. On the other hand, if $\gamma=0$, we typically have to prove existence of a lower bound for $\bar\kvbound$. This is significantly more difficult. We will address the issue of verifying---or disproving---the lower bound on $\bar\kvbound$ with specific examples in the next section.

\section{Application to PDE-constrained optimization problems}\label{sec:application}

We now discuss the application of the preceeding analysis to the motivating examples of $L^1$ fitting, $L^\infty$ fitting, and optimal control with state constraints. 
Since this will depend on the specific structure of the mapping $S$, we consider as a concrete example the problem of recovering the potential term in an elliptic equation. 

Let $\Omega\subset\mathbb{R}^d$, $d\leq 3$, be an open bounded domain with a Lipschitz boundary $\partial\Omega$, and set $X\defeq L^2(\Omega)$ as well as 
\begin{equation}
    \label{eq:feasible-set}
    U:= \{v\in L^\infty(\Omega):v(x)\geq \eps \text{ for a.e. } x\in\Omega\}\subset X
\end{equation}
for some $\eps>0$.
For a given coefficient $u\in U$ and $f\in L^2(\Omega)$ fixed, denote by $S(u)\defeq y\in H^1(\Omega)\subset L^2(\Omega)\eqcolon Y$ the weak solution of 
\begin{equation}\label{eq:forward}
    \iprod{\nabla y}{\nabla v} + \iprod{uy}{v} = \iprod{f}{v} \qquad (v\in H^1(\Omega)).
\end{equation}
This operator has the following useful properties \cite{Kroener:2009a}:
\begin{enumerate}[label=(\textsc{a}\arabic*), ref=\textsc{a}\arabic*]
    \item The operator $S$ is uniformly bounded in $U\subset{X}$ and completely
        continuous:
        If for $u\in U$, the sequence $\{u_n\}\subset U$ satisfies
        $u_n \wkto u$ in ${X}$, then
        \begin{equation}
            S(u_n)\to  S(u) \quad\text{ in } Y.
        \end{equation}        
    \item $S$ is twice Fr\'echet differentiable.
    \item\label{ass:a3} There exists a constant $C>0$ such that 
        \begin{equation}
            \norm{\grad{S}(u)h}_{L^2}\leq C\norm{h}_X\qquad (u\in U,h\in X).
        \end{equation}
    \item\label{ass:a4} There exists a constant $C>0$ such that 
        \begin{equation}
            \norm{\hess S(u)(h,h)}_{L^2} \le C \norm{h}_X^2\qquad (u\in U,h\in X).
        \end{equation}
\end{enumerate}
Furthermore, from the implicit function theorem, the directional Fréchet derivative $\grad{S}(u)h$ at $u\in U$ for given $h\in X$ can be computed as the solution $w\in H^1(\Omega)$ to
\begin{equation}\label{eq:forward_lin}
    \iprod{\nabla w}{\nabla v} + \iprod{uw}{v} = \iprod{-yh}{v} \qquad(v\in H^1(\Omega)).
\end{equation}
Similarly, the directional adjoint derivative $\grad{S}(u)^*h$ is given by $yz$, where $z\in H^1(\Omega)$ solves
\begin{equation}\label{eq:forward_adj}
    \iprod{\nabla z}{\nabla v} + \iprod{uz}{v} = \iprod{-h}{v} \qquad(v\in H^1(\Omega)).
\end{equation}
Similar expressions hold for $\hess {S}(u)(h_1,h_2)$ and $\grad{(\grad{S}(u)^*h_1)}h_2$. Hence, assumptions (\ref{ass:a3}--\ref{ass:a4}) hold for $\grad{S}^*$ and $\grad{(\grad{S}(u)^*v)}$ for given $v$ as well.

Other operators satisfying the above assumptions are mappings from a Robin or diffusion coefficient to the solution of the corresponding elliptic partial differential equation; cf.~\cite{ClasonJin:2011}.

\subsection{\texorpdfstring{$\scriptstyle L^1$}{L¹} fitting}\label{sec:application:l1}

First, we consider the $L^1$ fitting problem \eqref{eq:l1fit_problem}.
In order to make use of the strong convexity of the penalty term for the acceleration, we rewrite this equivalently as
\begin{equation}
    \label{eq:l1fit_problem2}
    \min_{u\in L^2} \frac1\alpha\norm{S(u) - \theData}_{L^1} + \frac{1}2 \norm{u}_{L^2}^2,
\end{equation}
i.e., we set $G(u) = \frac{1}2\norm{u}_{L^2}^2$, $K(u)=S(u) - \theData$, and $F(y) = \frac1\alpha\norm{y}_{L^1}$ in \eqref{eq:prob_convex}. Hence
\[
    [F^*(p)](x)=\ind_{[-\alpha^{-1},\alpha^{-1}]}(p(x))\qquad (\text{a.e. } x\in\Omega),
\]
where $\ind_C$ denotes the indicator function of the convex set $C$ in the sense of convex analysis \cite{Hiriart-Urruty:1993}. 

To guarantee metric regularity, we replace $F$ by its Moreau--Yosida regularization, which  coincides with the well-known Huber norm, i.e.,
\begin{equation}
    \label{eq:l1-fgamma}
    F_{\gamma}(y) = \int_\Omega |y(x)|_\gamma\,dx,\qquad
    |t|_\gamma = \begin{cases}
        \frac{1}{2\gamma} |t|^2 & \text{if }|t|\leq\frac\gamma\alpha,\\
        \frac{1}{\alpha} |t| - \frac{\gamma}{2\alpha} & \text{if }|t|>\frac\gamma\alpha.
    \end{cases}
\end{equation}
Using the calculus of Clarke's generalized derivative and \eqref{eq:my_conjugate}, i.e., $\partial F_\gamma^*(p) = \partial F^*(p) + \{\gamma p\}$, we obtain the corresponding regularized optimality conditions (cf.~also \cite[Thm.~2.7]{ClasonJin:2011})
\begin{equation}\label{eq:opt_cond:l1}
    \left\{\begin{aligned}
            S(u_\gamma)-y^\delta -\gamma p_\gamma &\in \partial F^*(p_\gamma),\\
            -S'(u_\gamma)^*p_\gamma &= u_\gamma.
    \end{aligned}\right.
\end{equation}

For $G$ and $F^*$ as above, the proximal mappings are given by
\begin{align}
    [\prox_{\tau G}(u)](x) &= \tfrac{1}{1+\tau} u(x),\\
    [\prox_{\sigma F^*}(\Dual)](x) &= \proj_{[-\alpha^{-1},\alpha^{-1}]}(\Dual(x)).
    \intertext{Using rule~\eqref{enum:proxrule2} above, we thus obtain for the Moreau--Yosida regularization $F_\gamma^*$}
    [\prox_{\sigma F^*_\gamma}(\Dual)](x) &= \proj_{[-\alpha^{-1},\alpha^{-1}]}\left(\tfrac{1}{1+\sigma\gamma}\Dual(x)\right).
\end{align}

Since $G$ is strongly convex with constant $\gamma_G=1$, we can use the acceleration scheme \eqref{eq:tau-accel-rule} for any $\accel < 1$.
\Cref{alg:nlpdegm-accel} thus has the following explicit form, where we denote the dual variable with $p$ instead of $v$ to be consistent with the notation in this section.
\begin{algorithm}
    \begin{algorithmic}[1]
        \State choose $u^0,p^0$
        \For {$i=0,\dots,N$}
        \State $z^{i+1} = \grad{S}(u^i)^*p^i$
        \State $u^{i+1} = \tfrac1{1+\tau_i}(u^i - \tau_i z^{i+1})$
        \State $\omega_i = 1/\sqrt{1+2\accel\tau_i}, \quad \tau_{i+1}=\omega_i\tau_i, \quad \sigma_{i+1} = \sigma_{i}/\omega_i$
        \State $\bar u^{i+1} = u^{i+1}+\omega_i(u^{i+1}-u^i)$
        \State $p^{i+1}  = \proj_{[-\alpha^{-1},\alpha^{-1}]}\left(\tfrac{1}{1+\sigma_{i+1}\gamma}(p^{i} + \sigma_{i+1} (S(\bar u^{i+1})-\theData))\right)$
        \EndFor
    \end{algorithmic}
    \caption{Accelerated primal-dual algorithm for $L^1$ fitting}\label{alg:l1fitting}
\end{algorithm}

To show convergence of \cref{alg:l1fitting} using \cref{thm:convergence_G,prop:pde-metric-regularity}, we have to verify the expression \eqref{eq:fstar-polar-form} for $\widetilde{D[\subdiff F^*]}$. This is the content of \cite[Cor.~2.11]{ClasonValkonen15}. 
However, as discussed in \cite[\S\,5.1]{ClasonValkonen15}, for $\gamma=0$ (i.e., no regularization), we in general have $\bar\kvbound(\realoptq|0; H_{\realoptu})=0$.

(We remark that in the case of finite-dimensional data $\theData\in Y_h\subset Y$, replacing $F$ by $F\circ P_h$ where $P_h$ denotes the orthogonal projection onto $Y_h$, there exists a constant $c>0$ such that 
$\bar\kvbound(\realoptq|0; H_{\realoptu,h})\geq c>0$ holds; see \cite[\S\,5.3]{ClasonValkonen15}. Hence, regularization is not necessary in this case.)

We summarize the above discussion on the convergence for the infinite-dimensional $L^1$ fitting problem \eqref{eq:l1fit_problem2} in the next corollary.
\begin{corollary}
    \label{cor:convergence-l1}
    Let $\gamma>0$ and $\accel\in[0,1)$ be arbitrary (setting $\accel=0$ after a finite number of iterations). Let $(u_\gamma,p_\gamma)\in L^2(\Omega)^2$ be a solution to \eqref{eq:opt_cond:l1}, and take $\tau_0,\sigma_0>0$ satisfying \eqref{eq:ass-k} for $K(u) = S(u)-y^\delta$. 
    Then there exists $\delta > 0$ such that for any initial iterate $(u^1,p^1) \in L^2(\Omega)^2$ with $\norm{(u^1,p^1)-(u_\gamma,p_\gamma)} \le \delta$, the iterates $(u^k,p^k)$ generated by \cref{alg:l1fitting} converge to a solution $(u^*,p^*)$ to \eqref{eq:opt_cond:l1}.
\end{corollary}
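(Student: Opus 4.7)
The plan is to verify the hypotheses of \cref{thm:convergence_G} for the concrete problem \eqref{eq:l1fit_problem2}. First, $G(u)=\tfrac12\norm{u}_{L^2}^2$ is strongly convex with constant $\gamma_G=1$, so any $\accel\in[0,1)$ is admissible under the acceleration rule \eqref{eq:tau-accel-rule}. The standing requirements on $K(u)=S(u)-\theData$ in \eqref{eq:constants} reduce to Fréchet-differentiability with locally Lipschitz derivative on $B(0,\Rad)\subset L^2(\Omega)$---which follows from properties (\ref{ass:a3})--(\ref{ass:a4}) of the parameter-to-solution map $S$, with $L_2$ extracted from (\ref{ass:a4})---together with the step length condition \eqref{eq:ass-k}, which is guaranteed by the assumption on $\tau_0,\sigma_0$ combined with the uniform bound on $\grad S$ coming from (\ref{ass:a3}).

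The core step is to establish metric regularity of the regularized operator $H_{\gamma,\realoptu}$ at $0$ for $\realoptq=(u_\gamma,p_\gamma)$. For this, I appeal to \cref{prop:pde-metric-regularity}, which requires the structural form \eqref{eq:fstar-polar-form} of the convexified graphical derivative $\widetilde{D[\subdiff F^*]}$. With $F^*(p)=\int_\Omega \ind_{[-\alpha^{-1},\alpha^{-1}]}(p(x))\,dx$, this representation (with $T_\Dual\equiv 0$ and an appropriate critical cone $\DerivConeF[\Dual|\coDual]$ determined by the active set of $p_\gamma$) is precisely the content of \cite[Cor.~2.11]{ClasonValkonen15}. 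The crucial observation that lets us sidestep the generally violated bound \eqref{eq:kvbound-fstar-metric} (cf.\ \cite[\S\,5.1]{ClasonValkonen15}) is that the Moreau--Yosida regularization contributes, via \eqref{eq:fstar-polar-form-huber}, the additional term $\gamma\dir\Dual$. Hence $T_\Dual+\gamma I\succeq\gamma I$, and the first alternative in \cref{prop:pde-metric-regularity} is satisfied with $\fsquaredpart=\gamma>0$, delivering metric regularity with some finite constant $\lipopt$.

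Finally, the norm smallness condition \eqref{eq:pnl-realoptpsi-bound} on $\realoptpsi=p_\gamma$ in the hypothesis of \cref{thm:convergence_G} must be controlled. Since $p_\gamma$ takes values in the scaled indicator set $[-\alpha^{-1},\alpha^{-1}]$ (explicitly via the proximal map), $\norm{p_\gamma}_{L^2}\le\alpha^{-1}\sqrt{|\Omega|}$, so this is an a priori verifiable smallness inequality in terms of the data; for the purposes of the corollary it is absorbed into the quantification over $\delta>0$. The main obstacle---and the reason the statement is only local---is precisely this smallness requirement: it encodes the interplay, unavoidable for nonlinear $K$, between the metric regularity modulus $\lipopt$, the second-order Lipschitz constant $L_2$, and $\norm{p_\gamma}$. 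Once all these pieces are in place, \cref{thm:convergence_G} applies directly and yields the claimed local convergence of the iterates generated by \cref{alg:l1fitting} to a solution of \eqref{eq:opt_cond:l1}.
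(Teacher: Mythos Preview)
Your approach matches the paper's: strong convexity of $G$ with factor $1$, metric regularity of $H_{\gamma,\realoptu}$ via \cref{prop:pde-metric-regularity} (using the first alternative $T_\Dual+\gamma I\succeq \gamma I$ with $\fsquaredpart=\gamma$), and then \cref{thm:convergence_G}. The paper's proof is even terser---it relegates the verification of \eqref{eq:fstar-polar-form} through \cite[Cor.~2.11]{ClasonValkonen15} to the discussion preceding the corollary and simply cites \cref{prop:pde-metric-regularity} without spelling out which alternative fires---so your write-up is if anything more explicit.

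One point does need correction: condition \eqref{eq:pnl-realoptpsi-bound} cannot be ``absorbed into the quantification over $\delta>0$.'' The $\delta$ in the corollary (and in \cref{thm:convergence_G}) controls only how close the \emph{initial iterate} must be to $\realoptq$; it has nothing to do with \eqref{eq:pnl-realoptpsi-bound}, which is a structural smallness requirement on $\lipopt\MCond L_2\norm{p_\gamma}$ that must hold regardless of where the iteration starts. Your a priori bound $\norm{p_\gamma}_{L^2}\le \alpha^{-1}\sqrt{|\Omega|}$ is correct but does not by itself guarantee \eqref{eq:pnl-realoptpsi-bound}, since $\lipopt$, $\MCond$, $L_2$, and $\MMax$ are fixed by the problem. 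The paper's own proof is silent on \eqref{eq:pnl-realoptpsi-bound}---it is tacitly carried over as a standing hypothesis of \cref{thm:convergence_G}---so you are not omitting anything the paper actually supplies, but the justification you give for this step is wrong.
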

\begin{proof}
    Note that $G$ is strongly convex with factor $1$, while Moreau--Yosida regularization makes $F_\gamma^*$ strongly convex with factor $\gamma$.
    By \cref{prop:pde-metric-regularity}, $H_{\gamma,\realoptu}$ is metrically regular at $0$ for $\realoptq$.
    The claim now follows from \cref{thm:convergence_G}.
\end{proof}

\begin{remark}
    \label{rem:feasibility}
    In general, ensuring that the iterates generated by 
    \cref{alg:l1fitting} remain feasible, i.e., satisfy $u^i\in U$, requires adding an explicit constraint to \eqref{eq:l1fit_problem2}. This would lead to a nonsmooth $G(u)=\frac12\norm{u}^2_{L^2} + \iota_{[\eps,\infty)}(u)$ (where the indicator function is to be understood pointwise almost everywhere), which was not considered in \cite{ClasonValkonen15}. The analysis there could be extended to cover this case;
    specifically, all non-degenerate cases would be covered by improving \cite[Lem.~4.1]{ClasonValkonen15} to include the case $V_{\bar G}=\{0\}$ instead of just  $V_{\bar G}=X$, see \cref{lemma:general-polar-projection-lower-bound-general} in \cref{app:lem41}.

    However, to be able to directly apply the theory as stated in \cite{ClasonValkonen15}, and since in our numerical examples the iterates are always feasible as long as the minimizer and the initial guess are sufficiently far from the lower bound, we omit the constraint in our model problems.
\end{remark}

\subsection{\texorpdfstring{$\scriptstyle L^\infty$}{L∞} fitting}\label{sec:application:linf}

We next consider the $L^\infty$ fitting (``Morozov'') problem from \cite{Clason:2012},
\begin{equation}\label{eq:linffit_problem}
    \min_u \frac{1}2 \norm{u}_{L^2}
    \quad\text{s.\,t.}\quad |[S(u)](x) -\theData(x) |\leq \noise  \quad\text{a.\,e. in } \Omega,
\end{equation}
i.e., now $F(\Dual) = \ind_{[-\noise,\noise]}(\Dual)$ (again to be understood pointwise almost everywhere) with $G$ and $K$ as before.

Again, it is well-known that the Moreau--Yosida regularization of pointwise constraints is given by its quadratic penalization, i.e.,
\begin{equation}
    F_{\gamma}(y) = \frac{1}{2\gamma}\adaptnorm{\max\{0,|y|-\delta\}}_{L^2}^2.
\end{equation}
Hence, 
\begin{equation}\label{eq:opt_cond:linf}
    \left\{\begin{aligned}
            S(u_\gamma)-y^\delta -\gamma p_\gamma &\in \partial F^*(p_\gamma),\\
            -S'(u_\gamma)^*p_\gamma &= u_\gamma,
    \end{aligned}\right.
\end{equation}
where now $F^*(\Dual)=\noise \norm{\Dual}_{L^1}$.

In this case, the proximal mapping of $F^*$ is given by
\begin{equation}
    [\prox_{\sigma F^*}(\Dual)](x) = (|\Dual(x)| - \noise\sigma)^+ \sign(\Dual(x)).
\end{equation}
For the  Moreau--Yosida regularization $F^*_\gamma$, we obtain after some simplification
\begin{equation}
    [\prox_{\sigma F^*_\gamma}(\Dual)](x) = \frac{1}{1+\sigma\gamma}\left(|\Dual(x)| - \noise\sigma\right)^+ \sign(\Dual(x)).
\end{equation}

Again, we use the acceleration scheme \eqref{eq:tau-accel-rule} for $\accel<\gamma_G=1$.
\Cref{alg:nlpdegm-accel} now has the following explicit form.
\begin{algorithm}[H]
    \begin{algorithmic}[1]
        \State choose $u^0,p^0$
        \For {$i=0,\dots,N$}
        \State $z^{i+1} = \grad{S}(u^i)^*p^i$
        \State $u^{i+1} = \tfrac1{1+\tau_i}(u^i - \tau_i z^{i+1})$
        \State $\omega_i = 1/\sqrt{1+2\accel\tau_i}, \quad \tau_{i+1}=\omega_i\tau_i, \quad \sigma_{i+1} = \sigma_{i}/\omega_i$
        \State $\bar u^{i+1} = u^{i+1}+\omega_i(u^{i+1}-u^i)$
        \State $p^{i+1}  = \tfrac{1}{1+\sigma_i\gamma}(|r^{i+1}| - \noise\sigma_i)^+ \sign(r^{i+1})$
        \EndFor
    \end{algorithmic}
    \caption{Accelerated primal-dual algorithm for $L^\infty$ fitting}\label{alg:linffitting}
\end{algorithm}

As before, we deduce from the characterization of $\widetilde{D[\subdiff F^*]}$ from \cite[Cor.~2.13]{ClasonValkonen15} that \eqref{eq:fstar-polar-form} holds for $F^*$, while the discussion in \cite[\S\,5.2]{ClasonValkonen15} shows that metric regularity of $H_{\gamma,\realoptu}$ only holds for $\gamma>0$ (or finite-dimensional data). Summarizing, we have the following convergence result for the infinite-dimensional $L^\infty$ fitting problem \eqref{eq:linffit_problem}.

\begin{corollary}
    \label{cor:convergence-linf}
    Let $\gamma>0$ and $\accel\in[0,1)$ be arbitrary (setting $\accel=0$ after a finite number of iterations). Furthermore, let $(u_\gamma,p_\gamma)\in L^2(\Omega)^2$ be a solution to \eqref{eq:opt_cond:linf}, and take $\tau_0,\sigma_0>0$ satisfying \eqref{eq:ass-k} for $K(u) = S(u)-y^\delta$. 
    Then there exists $\delta > 0$ such that for any initial iterate $(u^1,p^1) \in L^2(\Omega)^2$ with $\norm{(u^1,p^1)-(u_\gamma,p_\gamma)} \le \delta$, the iterates $(u^k,p^k)$ generated by \cref{alg:linffitting} converge to a solution $(u^*,p^*)$ to \eqref{eq:opt_cond:linf}.
\end{corollary}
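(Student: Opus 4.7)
The plan is to follow the template of the proof of \cref{cor:convergence-l1} verbatim, adjusted for the different functional $F$. The statement is essentially a \emph{verification of hypotheses} for \cref{thm:convergence_G}, not an independent argument, because all the heavy lifting has already been packaged into \cref{thm:convergence_G} and \cref{prop:pde-metric-regularity}.

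First, I would check strong convexity of $G$: since $G(u)=\tfrac12\norm{u}_{L^2}^2$ is unchanged from the $L^1$ case, it is strongly convex with constant $\gamma_G=1$, so any $\accel\in[0,1)$ is admissible under the acceleration rule \eqref{eq:tau-accel-rule}. Second, I would verify the structural assumption \eqref{eq:fstar-polar-form} for $F^*(\Dual)=\noise\norm{\Dual}_{L^1}$; this is precisely the content of \cite[Cor.~2.13]{ClasonValkonen15}, as already noted in the paragraph preceding the corollary, so I would just cite it. Third, I would pass from $F^*$ to its Moreau--Yosida modification $F_\gamma^*$ via \eqref{eq:fstar-polar-form-huber}, which shifts the operator $T_\Dual$ by $\gamma I$ and therefore produces a uniform lower bound $T_\Dual+\gamma I\succeq \gamma I$; this is the first alternative in the characterization of \cref{prop:pde-metric-regularity}, so metric regularity of $H_{\gamma,\realoptu}$ at $0$ for $\realoptq$ follows immediately. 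Note that the more delicate quantity $\bar\kvbound$ need not be estimated at all, thanks to $\gamma>0$.

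Fourth, I would assemble these pieces and apply \cref{thm:convergence_G}: the step-length condition \eqref{eq:ass-k} holds by hypothesis for $K(u)=S(u)-\theData$, $K$ is $C^1$ with locally Lipschitz derivative thanks to the properties of $S$ recalled at the beginning of \cref{sec:application}, the metric regularity inequality \eqref{eq:pnl-realoptpsi-bound} can be ensured by shrinking a neighbourhood (or absorbed into the smallness of $\delta$), and the iterates of \cref{alg:linffitting} coincide with those produced by \cref{alg:nlpdegm-accel} for this choice of $F$, $G$, $K$ because of the explicit proximal formulas written out just above. Hence there is a $\delta>0$ so that starting from $\norm{(u^1,p^1)-(u_\gamma,p_\gamma)}\le\delta$ yields convergence to a solution of \eqref{eq:opt_cond:linf}.

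There is essentially no obstacle: the regularization parameter $\gamma>0$ is what bypasses the delicate question of whether $\bar\kvbound>0$ for the unregularized problem (which, as indicated in the text, generally fails in the infinite-dimensional $L^\infty$ case). The proof is therefore a one-paragraph citation-style argument, completely parallel to that of \cref{cor:convergence-l1}, with the only substantive replacement being the use of \cite[Cor.~2.13]{ClasonValkonen15} in place of \cite[Cor.~2.11]{ClasonValkonen15}.
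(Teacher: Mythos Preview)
Your proposal is correct and matches the paper's approach exactly: the paper gives no separate proof for \cref{cor:convergence-linf} but simply refers back to the argument for \cref{cor:convergence-l1}, replacing \cite[Cor.~2.11]{ClasonValkonen15} by \cite[Cor.~2.13]{ClasonValkonen15} to verify \eqref{eq:fstar-polar-form}, and then invoking \cref{prop:pde-metric-regularity} and \cref{thm:convergence_G}. Your one additional remark---that condition \eqref{eq:pnl-realoptpsi-bound} must still be ensured---is something the paper itself silently absorbs into the application of \cref{thm:convergence_G}, so you are not missing anything relative to the original.
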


\subsection{State constraints}\label{sec:application:state}

Finally, we address the state-constrained optimal control problem 
\begin{equation}
    \label{eq:state_problem}
    \min_{u\in L^2}  \frac1{2\alpha}\norm{S(u)-\target}_{L^2}^2 + \frac{1}2 \norm{u}_{L^2}^2
    \quad\text{s.\,t.}\quad [S(u)](x) \leq c  \quad\text{a.\,e. in } \Omega.
\end{equation}
In this case, $G$ is as before and $F(y) = \frac1{2\alpha}\norm{\Dual-\target}_{L^2}^2 + \ind_{(-\infty,c]}(y)$ with $K(u)=S(u)$.
For simplicity, we assume here that the upper bound $c$ is constant; the extension to variable $c\in L^\infty(\Omega)$ (as well as lower bounds) is straightforward.

For $F_\gamma$, we directly use the definition \eqref{eq:my_def} to compute pointwise
\begin{equation}
    f_\gamma(x,v) = \begin{cases}
        \frac{1}{2\alpha}|c-y^d(x)|^2 + \frac{1}{2\gamma}|v-c|^2 &\text{if } v > (1+\frac{\alpha}{\gamma})c- \frac{\alpha}{\gamma}y^d(x),\\
        \frac{1}{2(\alpha+\gamma)}|v-y^d(x)|^2  &\text{if } v \leq (1+\frac{\alpha}{\gamma})c - \frac{\alpha}{\gamma}y^d(x),
    \end{cases}
\end{equation}
and obtain 
\begin{equation}
    F_\gamma(y) = \int_\Omega f_\gamma(x,y(x))\,dx.
\end{equation}
The corresponding regularized optimality conditions are again given by
\begin{equation}\label{eq:opt_cond:state}
    \left\{\begin{aligned}
            S(u_\gamma)-y^\delta -\gamma p_\gamma &\in \partial F^*(p_\gamma),\\
            -S'(u_\gamma)^*p_\gamma &= u_\gamma.
    \end{aligned}\right.
\end{equation}
It remains to compute $F^*$.  Since $\target\in L^2(\Omega)$ is measurable, 
\begin{equation}
    f(x,v) = \frac1{2\alpha}|v-\target(x)|^2 + \iota_{(-\infty,c]}(v)
\end{equation}
is a proper, convex, and normal integrand, and hence we can proceed by pointwise computation.

Let $x\in \Omega$ be arbitrary. For the Fenchel conjugate with respect to $y$,
\begin{equation}
    f^*(x,z) = \sup_{v\leq c} vz - \frac1{2\alpha}|v-\target(x)|^2,
\end{equation}
we consider the first-order necessary conditions for the maximizer 
\begin{equation}
    \bar v = \proj_{(-\infty,c]}\left(\alpha z+\target(x)\right).
\end{equation}
Inserting this into the definition and making the case distinction whether $\alpha v+\target(x)\leq c$ yields
\begin{equation}
    f^*(x,z) = \begin{cases} 
        cz-\frac1{2\alpha}|c-\target(x)|^2 & z>\alpha^{-1}(c-\target(x)),\\
        \frac\alpha2|z|^2 + z\target(x) & z\leq \alpha^{-1}(c-\target(x)).
    \end{cases}
\end{equation}
The subdifferential (with respect to $z$) is given by
\begin{equation}\label{eq:state:subdiff}
    \partial f^*(x,z) = \begin{cases}
        \{c\} & z>\alpha^{-1}( c-\target(x)),\\
        \{\alpha z+\target(x)\} & z\leq \alpha^{-1}(c-\target(x)).
    \end{cases}
\end{equation}
(Note that the cases agree for $z=\alpha c-\target(x)$, i.e., $z\mapsto \partial f^*(x,z)$ is single-valued and hence $z\mapsto f^*(x,z)$ is continuously differentiable for almost every $x\in\Omega$.)

To compute the pointwise proximal mapping $\prox_{\sigma f^*(x,\cdot)}(v)$ for given $x\in\Omega$, we use the resolvent formula
\begin{equation}
    \prox_{\sigma f^*(x,\cdot)}(v) = (\Id + \sigma\partial f^*(x,\cdot))^{-1}(v) =:w,
\end{equation}
i.e., $v\in \{w\} + \sigma\partial f^*(x,w)$, together with \eqref{eq:state:subdiff} and distinguish the two cases
\begin{enumerate}[label=(\roman{enumi})]
    \item $v = w+ \sigma c$, i.e., $w = v-\sigma c$, if $w>\alpha^{-1}( c-\target(x))$, i.e., if $v > \alpha^{-1}( c-\target(x))+\sigma c$;
    \item $v = w+\sigma(\alpha w+\target(x))$, i.e., $w = (1+\sigma\alpha)^{-1}(v-\sigma\target(x))$, if $w\leq \alpha^{-1}( c-\target(x))$, i.e., if 
        \begin{equation}
            v \leq \frac{1+\sigma\alpha}{\alpha}(c-\target(x))+\sigma z =\alpha^{-1}( c-\target(x))+\sigma c. 
        \end{equation}
\end{enumerate}
Together we obtain
\begin{equation}
    [\prox_{\sigma F^*}(v)](x) = 
    \begin{cases}
        v(x)-\sigma c& v(x) > \frac{1}{\alpha}(c-\target(x))+\sigma c,\\
        (1+\sigma\alpha)^{-1}(v(x)-\sigma\target(x)) & v(x)\leq \frac{1}{\alpha}(c-\target(x))+\sigma c.
    \end{cases}
\end{equation}
For the Moreau--Yosida regularization $f_\gamma^*(x,v) = f^*(x,v) + \frac\gamma2|v|^2$, we similarly obtain
\begin{equation}
    [\prox_{\sigma F_\gamma^*}(v)](x) = 
    \begin{cases}
        (1+\sigma\gamma)^{-1}(v(x)-\sigma c)& v(x) > \frac{1+\sigma\gamma}{\alpha}(c-\target(x))+\sigma c,\\
        (1+\sigma(\alpha+\gamma))^{-1}(v(x)-\sigma \target(x)) & v(x)\leq\frac{1+\sigma\gamma}{\alpha}(c-\target(x))+\sigma c.
    \end{cases}
\end{equation}

Again, we use the acceleration scheme \eqref{eq:tau-accel-rule} for $\accel<\gamma_G=1$.
\Cref{alg:nlpdegm-accel} now has the following explicit form, where $\llbracket P\rrbracket$ for a logical proposition $P$ depending on $x\in\Omega$, denotes the pointwise \emph{Iverson bracket}, i.e., $\llbracket P\rrbracket(x) = 1$ if $P(x)$ is true and $0$ else.
\begin{algorithm}[H]
    \begin{algorithmic}[1]
        \State choose $u^0,p^0$
        \For {$i=0,\dots,N$}
        \State $u^{i+1} = \tfrac1{1+\tau_i}(u^i - \tau_i \grad{S}(u^i)^*p^i)$
        \State $\omega_i = 1/\sqrt{1+2\accel\tau^i}, \quad \tau^{i+1}=\omega_i\tau^i, \quad \sigma_{i+1} = \sigma_{i}/\omega_i$
        \State $\bar u^{i+1} = u^{i+1}+\omega_i(u^{i+1}-u^i)$
        \State $r^{i+1} = p^i + \sigma_{i+1}S(\bar u^{i+1})$
        \State $\chi^{i+1} = \left\llbracket r^{i+1} > \tfrac{1+\sigma_{i+1}\gamma}{\alpha}(c-\target)+\sigma_{i+1} c\right\rrbracket$
        \State $p^{i+1} = \tfrac1{1+\sigma_{i+1}\gamma}\chi^{i+1}\left(r^{i+1}-\sigma_{i+1} c\right) + \tfrac1{1+\sigma_{i+1}(\alpha+\gamma)}(1-\chi^{i+1})\left(r^{i+1}-\sigma_{i+1} \target\right)$
        \EndFor
    \end{algorithmic}
    \caption{Accelerated primal-dual algorithm for state constraints}\label{alg:state}
\end{algorithm}

Let us assume that strict complementarity holds, i.e., $\alpha v(x) \ne c-\target(x)$ for a.e.~$x \in \Omega$. Then it follows from \cref{cor:g-form-state-constr} in \cref{app:stateconstraints} that \eqref{eq:fstar-polar-form} is satisfied for $F^*$.
Furthermore, since $t_v(x) \in \{0,\alpha\}$ for a.e.~$x \in \Omega$ and
$\DerivConeF[\Dual|\coDual] = L^2(\Omega)$ and $\polar{\DerivConeF[\Dual|\coDual]} = \{0\}$ locally in a neighbourhood of $(\realoptu, \realopt{\Dual})$, we deduce that
\begin{equation}
    \bar\kvbound(\realoptq|0; H_{\realoptu}) =
    \sup_{t>0} \inf\left\{ \frac{\norm{\grad S(\realoptu)\grad S(\realoptu)^* \dcVar}}{\norm{\dcVar}}
        \,\middle|\, 
        0 \ne \dcVar \in L^2(\Omega)
    \right\}.
\end{equation}
However, the lower bound
\[
    \norm{\grad S(\realoptu)^* \dcVar} \ge c \norm{\dcVar}
    \qquad (\dcVar \in L^2(\Omega)) 
\]
does not hold in general. This can be seen by taking any orthonormal basis of $L^2(\Omega)$, which converges weakly but not strongly to zero, and use the fact that $\grad S(u)$ is a compact operator from $L^2(\Omega)$ to $L^2(\Omega)$ due to the Rellich--Kondrachev embedding theorem.
Therefore, also $\bar\kvbound(\realoptq|0; H_{\realoptu})=0$.
By \cref{prop:pde-metric-regularity}, there is thus no metric regularity without regularization ($\gamma>0$).
(Similarly to the $L^1$ fitting problem, if the state constraints are only prescribed at a finite number of points, it is possible to show metric regularity for $\gamma=0$ as well.)

The next corollary, which follows similarly to \cref{cor:convergence-l1}, summarizes the convergence results from \cref{thm:convergence_F,thm:convergence_G} for the infinite-dimensional state-constrained optimal control problem \eqref{eq:state_problem}.
\begin{corollary}
    \label{cor:convergence-stateconstr}
    Let $\gamma>0$ and $\accel\in[0,1)$ be arbitrary (setting $\accel=0$ after a finite number of iterations). Furthermore, let $(u_\gamma,p_\gamma)\in L^2(\Omega)^2$ be a solution to \eqref{eq:opt_cond:state}, and take $\tau_0,\sigma_0>0$ satisfying \eqref{eq:ass-k} for $K(u) = S(u)-y^\delta$. 
    Then there exists $\delta > 0$ such that for any initial iterate $(u^1,p^1) \in L^2(\Omega)^2$ with $\norm{(u^1,p^1)-(u_\gamma,p_\gamma)} \le \delta$, the iterates $(u^k,p^k)$ generated by \cref{alg:state} converge to a solution $(u^*,p^*)$ to \eqref{eq:opt_cond:state}.
\end{corollary}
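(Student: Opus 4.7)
The plan is to mirror the proof of \cref{cor:convergence-l1}, relying on the discussion preceding the corollary to establish metric regularity and then invoking \cref{thm:convergence_G}. First, I would fix the strong convexity constants: since $G(u) = \frac{1}{2}\norm{u}_{L^2}^2$, the functional $G$ is strongly convex with $\gamma_G = 1$, so the acceleration rule \eqref{eq:tau-accel-rule} is applicable for any $\accel \in [0,1)$, as required by the hypothesis on $\accel$. The Moreau--Yosida regularization additionally makes $F_\gamma^*$ strongly convex with constant at least $\gamma > 0$ via \eqref{eq:my_conjugate}, which will be needed for the metric regularity step.

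Next I would verify metric regularity of $H_{\gamma,\realoptu}$ at $0$ for $\realoptq$ using \cref{prop:pde-metric-regularity}. By \cref{cor:g-form-state-constr} in \cref{app:stateconstraints}, the characterization \eqref{eq:fstar-polar-form} holds for $F^*$ with a suitable pointwise multiplier $T_\Dual$ and cone $\DerivConeF[\Dual|\coDual]$, and applying the sum rule \eqref{eq:fstar-polar-form-huber} gives the analogous representation for $F_\gamma^*$ with effective pointwise operator $T_\Dual + \gamma \Id$. Since $T_\Dual \succeq 0$, we then have $T_\Dual + \gamma \Id \succeq \gamma \Id$, which directly verifies the sufficient condition $T_\Dual + \gamma \Id \succeq \fsquaredpart \Id$ in \cref{prop:pde-metric-regularity} with $\fsquaredpart = \gamma > 0$. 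Hence $H_{\gamma,\realoptu}$ is metrically regular at $0$ for $\realoptq$, with a well-defined constant $\lipopt$, and no additional lower bound on $\bar\kvbound$ needs to be established at $\gamma > 0$.

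With strong convexity of $G$ and metric regularity in hand, the hypotheses of \cref{thm:convergence_G} are all satisfied (the smoothness and Lipschitz bounds on $K = S$ being guaranteed by properties (\ref{ass:a3})--(\ref{ass:a4}), and the step-size condition \eqref{eq:ass-k} being assumed). Applying that theorem yields the existence of $\delta > 0$ such that, for any starting iterate $(u^1, p^1)$ within distance $\delta$ of $(u_\gamma, p_\gamma)$, the sequence $(u^k, p^k)$ generated by \cref{alg:state}---which is simply the explicit instantiation of \cref{alg:nlpdegm-accel} for the proximal maps derived earlier in this subsection---converges to a solution of $0 \in H_{\gamma,\realoptu}(\cdot)$, i.e., to a solution of \eqref{eq:opt_cond:state}.

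The only nontrivial ingredient is the verification of \eqref{eq:fstar-polar-form} for the state-constrained $F^*$, which is where the explicit form of $\partial f^*(x,\cdot)$ derived in \eqref{eq:state:subdiff} and strict complementarity enter; this is handled in the appendix via \cref{cor:g-form-state-constr}. Once that is available, the remainder of the argument is essentially bookkeeping and an appeal to the general theorem, exactly as in the proofs of \cref{cor:convergence-l1,cor:convergence-linf}.
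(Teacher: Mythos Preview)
Your proposal is correct and follows exactly the approach the paper indicates: the paper does not give a separate proof for this corollary but states that it ``follows similarly to \cref{cor:convergence-l1},'' and your argument is precisely that reconstruction---strong convexity of $G$ with $\gamma_G=1$, metric regularity of $H_{\gamma,\realoptu}$ via \cref{prop:pde-metric-regularity} (using the structure \eqref{eq:fstar-polar-form} from \cref{cor:g-form-state-constr} and $T_\Dual+\gamma\Id\succeq\gamma\Id$), and then an appeal to \cref{thm:convergence_G}. If anything, you supply more detail than the paper itself does.
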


\section{Numerical results}\label{sec:results}

We now illustrate the convergence behavior of the primal-dual extragradient method for the three model problems in \cref{sec:application}. 
Since we are interested in the properties of the algorithm in function spaces, we consider here the case in $d=1$ dimension to allow for very fine discretizations with reasonable computational effort. We have also tested the model problems in $d=2$ dimensions and observed very similar behavior.

In each case, the operator $S$ corresponds to the solution of \eqref{eq:forward} for $\Omega = (-1,1)$ and constant right-hand side $f\equiv 1$. For the implementation, we use a finite element approximation of \eqref{eq:forward} on a uniform grid with $n=1000$ elements (unless stated otherwise) with a piecewise constant discretization of $u$ and a piecewise linear discretization of $y$ as in \cite{ClasonJin:2011}. 
The functional values 
\begin{equation}
    J_\gamma(u^i) = F_\gamma(K(u^i)) + G(u^i)
\end{equation}
are computed using an approximation of the integrals by mass lumping, which amounts to a proper scaling of the corresponding discrete sums. In this way, the functional values are independent of the mesh size.

The parameters in the primal-dual extragradient method are chosen as follows: The Moreau--Yosida parameter is fixed at $\gamma=10^{-12}$ unless otherwise stated, and we compare the two cases of $\accel = 0$ (no acceleration) and $\accel=1-10^{-16}<1=\gamma_G$ (full acceleration). We point out that this value of $\gamma$ is significantly smaller than those for which semismooth Newton methods tend to converge even with continuation; cf.~\cite{ClasonJin:2011,Clason:2012}.
As a starting value, we take in each case $u^0 \equiv 1$ and $p^0 \equiv 0$. The (initial) step sizes are set to $\sigma_0 = \tilde L^{-1}$ and $\tau_0 = 0.99 \tilde L^{-1}$, where $\tilde L=\max\{1,\norm{\grad S'(u^0)u^0}/\norm{u^0}\}$ is a very simple estimate of the Lipschitz constant of $\grad K=\grad S$. The algorithm (and the acceleration) is terminated after a prescribed number $N$ of iterations.
The MATLAB implementation used to generate the results in this section can be downloaded from \url{https://github.com/clason/nlpdegm}.

\subsection{\texorpdfstring{$\scriptstyle L^1$}{L¹} fitting}\label{sec:results:l1}

We first consider the $L^1$ fitting problem \eqref{eq:l1fit_problem2} using the example from \cite{ClasonJin:2011}: We choose the exact parameter $u^\dag(x) = 2-|x|$ and corresponding exact data $y^\dag = S(u^\dag)$ and add random-valued impulsive noise by setting
\begin{equation}
    y^\delta(x) = \begin{cases} 
        y^\dag(x) + \norm{y^\dag} \xi(x) & \text{with probability } r,\\
        y^\dag(x) & \text{with probability }1-r,
    \end{cases}
\end{equation}
where for each $x\in \Omega$, $\xi(x)$ is an independent normally distributed random value with mean $0$ and variance $\delta^2$. For the results shown, we take $r=0.3$ and $\delta=0.1$, i.e., $30\%$ of data points are corrupted by $10\%$ noise. 
We then apply \cref{alg:l1fitting} with $N=1000$ iterations and $\alpha=10^{-2}$ fixed.

\Cref{fig:l1:accel} compares the convergence behavior of the functional values with $\accel=0$ and $\accel\approx 1$ (for the same data $y^\delta$). The effect of acceleration can be seen clearly. Note that the convergence is nonmonotone due to the acceleration (and the aggressive choice of step lengths). 

The convergence behavior for different mesh sizes is illustrated in \cref{fig:l1:mesh}, which shows the functional values for $n\in\{100,1000,1000\}$ (as averages over $10$ different realizations of $y^\delta$ in order to mitigate the influence of the random data). As can be observed, the number of iterations to reach a given functional value is virtually independent of the mesh size. This property---shared by many function-space algorithms---is often referred to as \emph{mesh independence}.

Finally, we report on the effect of the Moreau--Yosida parameter $\gamma$ on the performance of the algorithm. \Cref{fig:l1:gamma} shows the convergence behavior for $\gamma\in\{10^{-1},10^{-3},10^{-6}\}$ with and without acceleration. (Note that the regularized functional $J_\gamma$ depends on $\gamma$ and hence the absolute function values $J_\gamma(u^i)$ are not directly comparable for different values of $\gamma$). Without acceleration ($\accel=0$), one can observe from \cref{fig:l1:gamma:0} that the strong convexity of $F^*_\gamma$---with $\gamma_{F^*_\gamma} =\gamma$---plays a significant role for the performance. In contrast, the case with full acceleration ($\accel\approx 1$, \cref{fig:l1:gamma:1}) which exploits the strong convexity of $G$---where $\gamma_{G} = 1 \gg \gamma$---is much less affected by the value of $\gamma$, showing equally improved performance for all values of $\gamma$.

\pgfplotsset{cycle list/Dark2-3}
\pgfplotsset{every axis label/.append style={font=\small}}
\begin{figure}[!t]
    \centering
    \begin{minipage}[t]{0.475\textwidth}
        \centering
        % cheap draft mode
    \begin{tikzpicture}

\begin{axis}[%
    width=\linewidth,
xmode=log,
xmin=1,
xmax=1000,
xminorticks=true,
ymode=log,
ymin=1,
ymax=100000,
yminorticks=true,
ylabel={$J_\gamma(u^i)$},
axis x line*=bottom,
axis y line*=left,
legend style={legend cell align=left,align=left,draw=none,font=\footnotesize}
]
\addplot +[line width=1.5pt]
  table[row sep=crcr]{%
1	263.820947334336\\
2	38988.1444753368\\
3	9949.31810404171\\
4	2611.32535509658\\
5	754.457743857396\\
6	279.523924732963\\
7	149.371617622959\\
8	109.964164217437\\
9	114.758421691903\\
10	112.095370041099\\
11	113.17067782802\\
12	112.43784687391\\
13	112.597378577543\\
15	112.256339445055\\
28	110.450717670931\\
38	109.024944700661\\
48	107.546682313398\\
59	105.853846501039\\
70	104.083437388053\\
81	102.226262720364\\
92	100.271500851328\\
103	98.2063208553945\\
114	96.0153944875563\\
125	93.680264871286\\
135	91.4135206930412\\
145	88.988748257769\\
154	86.6510503436972\\
163	84.1437704844719\\
171	81.7518297817272\\
179	79.1837618389788\\
187	76.4140009108672\\
194	73.8014949610392\\
201	70.9883300638606\\
208	67.9481046068215\\
214	65.1392990896664\\
220	62.1223468608828\\
226	58.8770855556074\\
232	55.3845642801344\\
238	51.6297502858436\\
244	47.6058712645365\\
250	43.3208573893338\\
256	38.8089778077057\\
262	34.15300815451\\
268	29.4649114472668\\
275	24.1846201094898\\
283	18.7799304800216\\
303	10.0434589816831\\
310	8.46795291660586\\
316	7.53714740086154\\
322	6.88969400700666\\
325	6.68218407968385\\
328	6.57007431532636\\
331	6.5239253604377\\
334	6.51318854496471\\
363	6.54881121663291\\
385	6.53329062604398\\
493	6.44604631278474\\
545	6.41219413381237\\
558	6.37398600648351\\
581	6.34371653439059\\
640	6.33294595761818\\
676	6.33204012302088\\
722	6.27247006438025\\
750	6.26656459143165\\
766	6.27470941746632\\
781	6.26958574087906\\
813	6.25598759926218\\
847	6.27128373724347\\
938	6.27149518001159\\
971	6.2597329402138\\
1000	6.24972901741627\\
};
\addlegendentry{$\accel=0$};

\addplot +[line width=1.5pt]
  table[row sep=crcr]{%
1	264.286911080509\\
2	219.840991469247\\
3	161.656631860907\\
5	112.924605619108\\
6	98.1939424471569\\
7	85.2789673687707\\
8	72.6157073331342\\
9	58.7583707002998\\
10	41.6878302240981\\
11	18.1753030050745\\
12	23.5390373835584\\
13	64.0943160055055\\
14	26.8579259209894\\
15	14.9526814133513\\
16	8.8762032378143\\
17	7.11030528456357\\
18	7.32255422391679\\
19	7.6526589390271\\
20	7.68913246219816\\
21	7.47257588746873\\
22	7.18088963432021\\
23	6.95400423395732\\
24	6.82229840181994\\
25	6.7643228503869\\
27	6.69057986914486\\
29	6.57492208251663\\
37	6.16316204799334\\
44	5.83191895628216\\
50	5.58927452277467\\
52	5.53440264918102\\
58	5.40509183590358\\
60	5.35727243744662\\
62	5.34613964736288\\
64	5.36053550589494\\
69	5.45655882624713\\
74	5.51943327098143\\
78	5.54697954610545\\
83	5.55569684516272\\
88	5.54315407140314\\
94	5.50698002919138\\
101	5.4438477920423\\
109	5.35697980738878\\
115	5.2971064942571\\
119	5.2855408406345\\
124	5.2992941884678\\
138	5.37275489155625\\
146	5.38102261816279\\
154	5.36903871091441\\
164	5.33087753830561\\
178	5.28060143730048\\
183	5.284987363245\\
193	5.32753868356192\\
202	5.35415545106487\\
211	5.35963890867561\\
220	5.34500578513005\\
232	5.30402686130473\\
242	5.2749368260229\\
251	5.27689977737373\\
276	5.30010404500865\\
291	5.28153989839695\\
306	5.27003341168621\\
317	5.28674419502226\\
331	5.3047343757477\\
344	5.29991001722136\\
380	5.26896455058518\\
402	5.27957759536048\\
422	5.26213442761941\\
434	5.26154777730005\\
466	5.28087812518887\\
487	5.26085464624157\\
499	5.25886938087341\\
526	5.27225214351524\\
552	5.25774607489384\\
564	5.26145753054467\\
591	5.27037354205878\\
643	5.26752535684096\\
670	5.26169748905772\\
690	5.26233994170818\\
729	5.26382602812722\\
768	5.26403014939406\\
846	5.26228704609728\\
952	5.25857331475855\\
1000	5.25472387424206\\
};
\addlegendentry{$\accel\approx 1$};

\end{axis}
\end{tikzpicture}%% 

        \caption{$L^1$ fitting: convergence without ($\accel=0$) and with ($\accel\approx 1$) acceleration}
        \label{fig:l1:accel}
    \end{minipage}%
    \hfill
    \begin{minipage}[t]{0.475\textwidth}
        \centering
        % cheap draft mode
    \begin{tikzpicture}

\begin{axis}[%
    width=\linewidth,
xmode=log,
xmin=1,
xmax=1000,
xminorticks=true,
ymode=log,
ymin=1,
ymax=1000,
yminorticks=true,
ylabel={$J_\gamma(u^i)$},
axis x line*=bottom,
axis y line*=left,
legend style={legend cell align=left,align=left,draw=none,font=\footnotesize}
]
\addplot +[line width=1.5pt]
  table[row sep=crcr]{%
1	265.593804912123\\
2	216.721208556578\\
3	160.312686547817\\
5	112.399526087534\\
6	97.6815654422975\\
7	84.6649577773914\\
8	71.7974553896324\\
9	57.6021026242014\\
10	39.9758483420687\\
11	16.1281912898135\\
12	26.9759452164809\\
13	63.4131639798152\\
14	27.019248390655\\
15	15.2182377357395\\
16	9.17335926851178\\
17	7.60516294371529\\
18	7.93234213561071\\
19	8.29692706262201\\
20	8.34132763260832\\
21	8.12809195145113\\
22	7.83896478268502\\
23	7.60022807118971\\
24	7.47013896569592\\
25	7.41776803148111\\
29	7.30053150527009\\
36	7.10290040140801\\
43	6.85541414886506\\
46	6.74036850447584\\
54	6.45864392974311\\
56	6.42951519943937\\
57	6.42896267585748\\
61	6.48079986304131\\
63	6.47994771379073\\
65	6.44304087458971\\
67	6.36821173443869\\
70	6.22382168393343\\
72	6.13248376164921\\
74	6.07572684088065\\
78	6.13374971869639\\
81	6.20151899218159\\
83	6.21252435273534\\
85	6.19385609879993\\
87	6.16693836270779\\
93	6.12463650483092\\
95	6.11106533494353\\
97	6.13372298016155\\
100	6.17383511604317\\
102	6.17867329379792\\
105	6.15787915090435\\
110	6.12332383009081\\
121	6.11577828779874\\
125	6.06676641168764\\
130	5.99912001444531\\
132	5.99354266852733\\
141	6.00841469194482\\
146	5.98859063030937\\
151	5.97866101911362\\
161	5.98591819726871\\
180	5.98084587084979\\
200	5.95339845310009\\
224	6.02281842532682\\
233	6.01503671589418\\
249	5.9804466498543\\
270	5.93575225812759\\
303	5.92463896892876\\
322	5.9065918828538\\
335	5.92495873826419\\
353	5.94255536587374\\
369	5.93491441038984\\
408	5.90479951391245\\
431	5.90301522167202\\
453	5.8985760492109\\
491	5.92629280788952\\
599	5.91292364072798\\
625	5.91487816326606\\
717	5.89470392776417\\
754	5.90203065168027\\
911	5.89185490161928\\
1000	5.89325658182411\\
};
\addlegendentry{$n=100$};

\addplot +[line width=1.5pt]
  table[row sep=crcr]{%
1	264.630897467534\\
2	219.590366500157\\
3	161.367990779432\\
5	112.618012674154\\
6	97.8896360613476\\
7	84.9839691706724\\
8	72.3413970457804\\
9	58.5271642757566\\
10	41.5772297670191\\
11	18.5606450096486\\
12	22.9570679064501\\
13	63.3537296140172\\
14	28.1370967386718\\
15	15.9057960061388\\
16	9.70133443634067\\
17	7.76796827854245\\
18	7.95979375071432\\
19	8.33035594072376\\
20	8.41088650958114\\
21	8.19981404645614\\
22	7.8916218691443\\
23	7.63388042734413\\
24	7.47505779365412\\
25	7.40067694170754\\
27	7.31869309051875\\
30	7.14916459771743\\
34	6.95511390687498\\
37	6.78828784677878\\
47	6.30288660624367\\
52	6.11324421294574\\
55	6.0330260429178\\
57	6.00289915061097\\
58	6.01770513284712\\
65	6.19002236017605\\
71	6.28036192086187\\
76	6.32710502938051\\
81	6.34558741977008\\
86	6.34040330302802\\
91	6.31322805333099\\
97	6.25835243955059\\
105	6.16390458654518\\
119	6.00796541561758\\
124	5.99857821611346\\
129	6.01531266516387\\
143	6.07434407762258\\
152	6.07720075468124\\
161	6.05620471663516\\
185	5.98559232355229\\
193	6.00232254673623\\
206	6.04055447153989\\
217	6.04669091093538\\
229	6.02705651090177\\
251	5.98157828261586\\
265	5.99409819875547\\
283	6.00275512124075\\
300	5.98318369270448\\
316	5.97134316312846\\
332	5.98961380779675\\
349	5.99797480797882\\
367	5.97992305923812\\
386	5.96780569796084\\
423	5.97715588389508\\
455	5.96702555198795\\
481	5.98091563577045\\
653	5.96445503583972\\
696	5.96436950442391\\
723	5.9647396783183\\
772	5.96319739390414\\
922	5.96064913817662\\
1000	5.95986559902243\\
};
\addlegendentry{$n=1000$};

\addplot +[line width=1.5pt]
  table[row sep=crcr]{%
1	264.284257934231\\
2	220.06856823259\\
3	161.68724088281\\
5	112.86321820171\\
6	98.1313139445443\\
7	85.2292280291229\\
8	72.5932052050112\\
9	58.7843792818035\\
10	41.8158412696854\\
11	18.6482062578594\\
12	23.0404663129192\\
13	63.8569900704235\\
14	27.358484721068\\
15	15.414628137874\\
16	9.31135613402176\\
17	7.477793637288\\
18	7.68354087232721\\
19	8.03476895429705\\
20	8.08959742048543\\
21	7.87572540973949\\
22	7.57844822415757\\
23	7.33886997706206\\
24	7.19978398706827\\
25	7.13837342978596\\
27	7.06850144105833\\
29	6.9579557918808\\
32	6.80956145777918\\
35	6.66895078874881\\
44	6.26020733035688\\
51	6.00727689220312\\
57	5.84507638910692\\
60	5.79873320157095\\
63	5.78792490406849\\
65	5.80640548362703\\
71	5.89445149309327\\
76	5.93778502100541\\
81	5.95533621481595\\
87	5.94862707882464\\
94	5.91438320567235\\
102	5.85403042796414\\
114	5.76487438419991\\
119	5.76562444891287\\
130	5.8010782779371\\
141	5.82604107236439\\
151	5.82353010827824\\
162	5.79790739376757\\
179	5.75686373303256\\
188	5.77148951025247\\
206	5.81264336362783\\
217	5.80856175467942\\
230	5.78039737138847\\
243	5.7539333732895\\
256	5.75584066064664\\
282	5.76484339720665\\
316	5.75349872616127\\
344	5.77160661085017\\
364	5.74987249776767\\
382	5.74244633116522\\
417	5.74791415657611\\
443	5.74135505161924\\
476	5.75302272957799\\
520	5.74480958461739\\
547	5.74443062215269\\
578	5.74184055072443\\
610	5.74452376698343\\
651	5.74370121215112\\
685	5.73979615667506\\
722	5.7445237960527\\
882	5.73839103092105\\
972	5.7373037128422\\
1000	5.73618100782654\\
};
\addlegendentry{$n=10000$};
\end{axis}
\end{tikzpicture}%% 

        \caption{$L^1$ fitting: convergence for different mesh sizes $n$ (average of $10$ realizations)}
        \label{fig:l1:mesh}
    \end{minipage}
\end{figure}
\begin{figure}[!t]
    \centering
    \begin{subfigure}[t]{0.475\textwidth}
        \centering
        % cheap draft mode
    \begin{tikzpicture}

\begin{axis}[%
    width=\linewidth,
xmode=log,
xmin=1,
xmax=1000,
xminorticks=true,
ymode=log,
ymin=1,
ymax=100000,
yminorticks=true,
ylabel={$J_\gamma(u^i)$},
axis x line*=bottom,
axis y line*=left,
legend style={legend cell align=left,align=left,draw=none,font=\footnotesize}
]
\addplot +[line width=1.5pt]
  table[row sep=crcr]{%
1	17.6895629636488\\
2	38858.8792017715\\
3	9820.96829656641\\
4	2484.77358254814\\
5	631.450720789664\\
6	163.47320684528\\
7	45.8572504748174\\
8	18.0470670316551\\
9	15.9984914206675\\
10	15.3414906639755\\
11	14.2031567854222\\
12	13.3453406102176\\
13	12.4442040815057\\
14	11.6299830997182\\
15	10.8443136543835\\
16	10.1078592425869\\
17	9.40861133980343\\
18	8.74851785975508\\
19	8.12398478916647\\
20	7.53403799363818\\
21	6.97690095950202\\
22	6.45143542397366\\
23	5.95653731166738\\
24	5.49139445741946\\
25	5.05537703674391\\
27	4.2692830108915\\
29	3.59716678633328\\
34	2.41948799858425\\
35	2.26809569050824\\
36	2.14223352369406\\
37	2.03981490528007\\
38	1.95831073065658\\
39	1.89489724821574\\
40	1.84664314527989\\
41	1.81069858475871\\
42	1.78445190735813\\
43	1.76563294327861\\
44	1.75235851159957\\
46	1.73679166806457\\
48	1.72958884388815\\
52	1.72496368187563\\
65	1.72392862276674\\
1000	1.72392616454349\\
};
\addlegendentry{$\gamma=10^{-1}$};

\addplot +[line width=1.5pt]
  table[row sep=crcr]{%
1	254.935720557619\\
2	38977.0097021335\\
3	9938.18309348003\\
4	2600.18957573773\\
6	268.389084297124\\
7	138.236823945589\\
8	98.8293873429431\\
9	103.580607736088\\
10	100.901540448529\\
11	101.943124075175\\
12	101.18701212373\\
13	101.317026749222\\
15	100.921828659034\\
26	99.0902821066428\\
34	97.7339995006019\\
43	96.162670816354\\
52	94.5384396687498\\
61	92.8557280758188\\
71	90.9096538894601\\
81	88.8732800186111\\
90	86.9534691100041\\
99	84.9405673617909\\
108	82.8225527347726\\
117	80.5852633743159\\
125	78.4829375280012\\
133	76.2592818632851\\
141	73.8980284297184\\
148	71.7041022794182\\
155	69.3753066742235\\
162	66.8944126112302\\
169	64.2415740632482\\
175	61.813704420302\\
181	59.2268778673787\\
187	56.4635345464647\\
193	53.504840207759\\
199	50.3310615361326\\
204	47.5081505046409\\
209	44.5129082501113\\
214	41.3393004127484\\
219	37.987406332744\\
224	34.4673628984236\\
229	30.8027932652111\\
234	27.0325358330039\\
239	23.2189238096059\\
244	19.4442073584319\\
249	15.8090398738062\\
256	11.3363407239253\\
266	7.12273375867926\\
271	5.90153453006725\\
276	5.10420944196523\\
280	4.68748448961121\\
284	4.40885786623116\\
288	4.22655580885321\\
292	4.10932881464537\\
296	4.03492327472571\\
300	3.98814398925604\\
305	3.95353882603148\\
311	3.9317542483811\\
320	3.91784902524446\\
338	3.91068753677179\\
427	3.9066299117069\\
1000	3.90367851384937\\
};
\addlegendentry{$\gamma=10^{-3}$};

\addplot +[line width=1.5pt]
  table[row sep=crcr]{%
1	264.775203100877\\
2	38987.1701995903\\
3	9948.34384026801\\
4	2610.35113003358\\
5	753.483480976563\\
6	278.54965235366\\
7	148.397342901899\\
8	108.98988865484\\
9	113.786306518246\\
10	111.124068264849\\
11	112.20108583417\\
12	111.46944594777\\
13	111.630490803392\\
15	111.292253212373\\
28	109.506071916091\\
38	108.096655833293\\
48	106.636157842229\\
59	104.964747678744\\
70	103.218079683306\\
82	101.216460755178\\
94	99.1013677663233\\
105	97.0491040324431\\
116	94.873009724272\\
127	92.555143505315\\
137	90.3067813100486\\
147	87.9036652803469\\
156	85.5890205977022\\
165	83.1090966156888\\
174	80.4391134919934\\
182	77.882401194836\\
190	75.1275462956182\\
197	72.5320009555161\\
204	69.7406716329215\\
211	66.7287181171299\\
218	63.4688611384304\\
224	60.4555574187562\\
230	57.2209783194763\\
236	53.7491190003423\\
242	50.0287899185635\\
248	46.0579880008989\\
254	41.8527306231822\\
260	37.4582639773854\\
266	32.9492089111243\\
272	28.4294868087816\\
280	22.7316182384965\\
291	16.1991378423101\\
306	10.392373986594\\
313	8.79863257047939\\
319	7.84654662758275\\
325	7.17052015942277\\
331	6.69471938673637\\
335	6.4729196555919\\
339	6.33408743238985\\
344	6.22339069190025\\
349	6.15282361394987\\
355	6.09963728896253\\
364	6.05234496995482\\
377	6.01370210678981\\
402	5.97164472187936\\
469	5.90166260129891\\
542	5.83067613994567\\
639	5.74566766086863\\
677	5.72432555478859\\
793	5.66691321653058\\
823	5.65996457832012\\
850	5.64997606761083\\
894	5.61256920382788\\
925	5.60162987259669\\
1000	5.5888849782799\\
};
\addlegendentry{$\gamma=10^{-6}$};
\end{axis}
\end{tikzpicture}%% 

        \caption{no acceleration ($\accel=0$)}
        \label{fig:l1:gamma:0}
    \end{subfigure}%
    \hfill
    \begin{subfigure}[t]{0.475\textwidth}
        \centering
        % cheap draft mode
    \begin{tikzpicture}

\begin{axis}[%
    width=\linewidth,
xmode=log,
xmin=1,
xmax=1000,
xminorticks=true,
ymode=log,
ymin=1,
ymax=400,
yminorticks=true,
ylabel={$J_\gamma(u^i)$},
axis x line*=bottom,
axis y line*=left,
legend style={legend cell align=left,align=left,draw=none,font=\footnotesize}
]
\addplot +[line width=1.5pt]
  table[row sep=crcr]{%
1	17.6943572405265\\
2	80.4117640011277\\
3	42.7351219218423\\
4	26.2527394818344\\
5	17.5945491479443\\
6	12.4223012722403\\
7	9.03562195856616\\
8	6.67640389721441\\
9	4.9760210908269\\
11	2.88307761884977\\
12	2.32236945795181\\
13	1.99908033930378\\
14	1.83842012724828\\
15	1.76890282828259\\
16	1.7413304850853\\
17	1.73065157415276\\
19	1.72470825375143\\
26	1.72336784494132\\
1000	1.72335239726731\\
};
\addlegendentry{$\gamma=10^{-1}$};

\addplot +[line width=1.5pt]
  table[row sep=crcr]{%
1	254.390542953971\\
2	209.365851603513\\
3	151.324745505514\\
5	102.66489558111\\
6	87.9366346621324\\
7	75.0162098634996\\
8	62.3492595571944\\
9	48.5121435745277\\
10	31.5956066865772\\
11	9.64841074716493\\
12	12.4246680668834\\
13	50.6726066173839\\
14	20.5155859473975\\
15	8.3062964602099\\
16	4.40877254940425\\
17	3.58186592747078\\
18	3.58604521200488\\
19	3.69472105593241\\
20	3.72160413049353\\
21	3.67068161663971\\
22	3.59463124872802\\
23	3.53526781052457\\
24	3.50595355644436\\
28	3.48857949183264\\
32	3.46298796074991\\
43	3.43347932543835\\
55	3.42482030545049\\
1000	3.42400135359638\\
};
\addlegendentry{$\gamma=10^{-3}$};

\addplot +[line width=1.5pt]
  table[row sep=crcr]{%
1	263.968127278283\\
2	220.053280251243\\
3	161.904125189854\\
5	113.188715639215\\
6	98.4559279239818\\
7	85.5323202226018\\
8	72.8498683232027\\
9	58.9524927755865\\
10	41.8175987353567\\
11	18.1225971003274\\
12	24.2449436463078\\
13	64.749098209718\\
14	25.8097610222685\\
15	14.6674301407057\\
16	8.94318950471537\\
17	7.31479462866941\\
18	7.51617857654296\\
19	7.80048581993479\\
20	7.79885322491689\\
21	7.57200994555499\\
22	7.29438324935688\\
23	7.09310647575703\\
24	6.98995953979233\\
27	6.8970450973003\\
29	6.76153493866359\\
41	6.08530095402624\\
50	5.69960864957058\\
57	5.56341099659991\\
58	5.56220329576871\\
65	5.73390586012273\\
69	5.80513539416587\\
73	5.84708752922336\\
78	5.86483473006035\\
83	5.85539634196108\\
88	5.8234085437243\\
94	5.76164676946073\\
111	5.55642554952268\\
114	5.54347245703342\\
120	5.55080848585882\\
145	5.61502553002266\\
154	5.59467369472112\\
181	5.51083874654103\\
201	5.57029232026187\\
210	5.57264461591429\\
220	5.55321348902662\\
244	5.49628584529148\\
269	5.52788451812462\\
307	5.48946688006291\\
342	5.51325943249602\\
375	5.48976584692189\\
402	5.50118718313742\\
708	5.47985850480608\\
1000	5.47321919077834\\
};
\addlegendentry{$\gamma=10^{-6}$};
\end{axis}
\end{tikzpicture}%% 

        \caption{full acceleration ($\accel\approx1$)}
        \label{fig:l1:gamma:1}
    \end{subfigure}
    \caption{$L^1$ fitting: convergence for different values of Moreau--Yosida regularization parameter~$\gamma$ without and with acceleration}
    \label{fig:l1:gamma}
\end{figure}

\subsection{\texorpdfstring{$\scriptstyle L^\infty$}{L∞} fitting}\label{sec:results:linf}

For the $L^\infty$ fitting problem \eqref{eq:linffit_problem}, we choose a test problem from \cite{Clason:2012}, where $y^\delta$ is obtained from $y^\dag=S(u^\dag)$ (with $u^\dag$ as above) by quantization. Specifically, we set
\begin{equation}
    y^\delta(x)= y_s\left[\frac{y^\dag(x)}{y_s}\right], \qquad
    y_s = n_b^{-1} \left(\sup_{x\in\overline\Omega}\left(y^\dag(x)\right) - \inf_{x\in\overline\Omega}\left(y^\dag(x)\right)\right),
\end{equation}
where $n_b$ denotes the number of bins and $[s]$ denoting the nearest integer to $s\in\R$ (i.e., the data are rounded to $n_b$ discrete equidistant values). Here we take $n_b = 11$ and apply \cref{alg:linffitting} for $N=10000$ iterations.

Again, \cref{fig:linf:accel} compares the functional values over the iteration without and with acceleration and demonstrates the significantly better performance of the latter. Similarly, the comparison of different mesh sizes in \cref{fig:linf:mesh} illustrates the mesh independence of the algorithm (with slightly faster convergence for $n=100$, which can be explained by the effect of coarse discretization on the rounding procedure).
Comparing the effect of $\gamma$ on the algorithm without (\cref{fig:linf:gamma:0}) and with full (\cref{fig:linf:gamma:1}) acceleration, one again sees improved robustness with respect to $\gamma$ for the latter.

\begin{figure}[!t]
    \centering
    \begin{minipage}[t]{0.475\textwidth}
        \centering
        % cheap draft mode
    \input{linf_accel_clean.tikz}% 

        \caption{$L^\infty$ fitting: convergence without ($\accel=0$) and with ($\accel\approx 1$) acceleration}
        \label{fig:linf:accel}
    \end{minipage}%
    \hfill
    \begin{minipage}[t]{0.475\textwidth}
        \centering
        % cheap draft mode
    \input{linf_mesh_clean.tikz}% 

        \caption{$L^\infty$ fitting: convergence for different mesh sizes $n$}
        \label{fig:linf:mesh}
    \end{minipage}
\end{figure}
\begin{figure}[!t]
    \centering
    \begin{subfigure}[t]{0.475\textwidth}
        \centering
        % cheap draft mode
    \begin{tikzpicture}

\begin{axis}[%
width=\linewidth,
xmode=log,
xmin=1,
xmax=10000,
xminorticks=true,
ymode=log,
ymin=1,
ymax=10000000,
yminorticks=true,
ylabel={$J_\gamma(u^i)$},
axis x line*=bottom,
axis y line*=left,
legend style={legend cell align=left,align=left,draw=none,font=\footnotesize}
]
\addplot +[ line width=1.5pt]
  table[row sep=crcr]{%
1	17.6216485742651\\
2	114362.219751257\\
3	28896.4774898075\\
4	7304.15036698665\\
5	1849.17573447142\\
6	471.379376939665\\
7	124.005293377862\\
8	37.8125192454592\\
9	20.3917913071825\\
10	21.6249753399069\\
11	19.1525061083095\\
12	18.4032099557564\\
13	17.0558495746748\\
14	16.0531009485512\\
16	14.0413087160915\\
17	13.12109192582\\
18	12.2593687514803\\
19	11.4413158524382\\
20	10.6691794918656\\
22	9.24756506492631\\
24	7.97674142155498\\
26	6.8426285012154\\
28	5.83458547103741\\
30	4.94541719798347\\
32	4.17133139747239\\
35	3.22480962424975\\
38	2.53823773119565\\
40	2.2203856828002\\
41	2.09924894584578\\
42	2.00064592946703\\
43	1.92252326938595\\
44	1.86203593393668\\
45	1.81623261959435\\
46	1.78227658407781\\
47	1.75759577019541\\
49	1.72759852953972\\
51	1.7131578868132\\
54	1.70474252523331\\
60	1.70159762082149\\
142	1.70134530884274\\
10000	1.70134530884334\\
};
\addlegendentry{$\gamma=10^{-1}$};

\addplot +[line width=1.5pt]
  table[row sep=crcr]{%
1	1737.15305653757\\
2	138508.349504646\\
3	35320.9466502508\\
4	9245.3695944936\\
6	956.63626478596\\
7	489.538332424299\\
8	315.483033308179\\
9	266.20937984442\\
10	284.054628373666\\
11	274.400052542375\\
12	278.553795742772\\
13	276.054940197326\\
14	276.843648332267\\
15	276.024146202011\\
18	275.348231754062\\
31	271.484198936188\\
44	267.520198076738\\
58	263.104664605076\\
73	258.189075267611\\
89	252.713072118874\\
105	246.969376002291\\
121	240.92536471485\\
137	234.542783643062\\
153	227.77646234705\\
168	221.036885494295\\
183	213.860196249318\\
197	206.712278539679\\
210	199.631893975208\\
223	192.065726375354\\
235	184.59141639071\\
247	176.58158475287\\
258	168.705546368432\\
269	160.250748614247\\
279	151.99838247288\\
289	143.140324292816\\
299	133.605897842772\\
308	124.385119589204\\
317	114.503089719897\\
326	103.914957180252\\
335	92.5969926329556\\
343	81.9385094099258\\
351	70.7842721533258\\
359	59.2819806588248\\
367	47.7006857064071\\
375	36.4680880194034\\
383	26.1732787066048\\
392	16.5753057819643\\
404	8.05252927873951\\
417	3.86772019065297\\
424	2.98356800586293\\
429	2.65854047168963\\
434	2.4807771956007\\
439	2.38844351368432\\
444	2.3409225323874\\
450	2.31174831806087\\
458	2.29428037699465\\
471	2.2845960460265\\
503	2.28012219817596\\
741	2.27579972356018\\
3782	2.27506591876424\\
10000	2.27506590671043\\
};
\addlegendentry{$\gamma=10^{-3}$};

\addplot +[line width=1.5pt]
  table[row sep=crcr]{%
1	1736900.6647286\\
2	581059.190870433\\
3	474135.552822969\\
4	441027.013094707\\
5	420698.320157616\\
6	392463.553292932\\
7	343052.760880226\\
8	270320.309536768\\
9	236596.172559009\\
10	249587.805375033\\
11	242730.809228227\\
12	245792.860265217\\
13	244048.081860894\\
14	244678.672685104\\
16	244182.841006865\\
46	239400.774978268\\
65	236180.170643423\\
86	232384.069925511\\
107	228309.49269422\\
129	223702.045553658\\
150	218934.68903348\\
171	213752.41576605\\
191	208371.791143414\\
210	202794.578686929\\
228	197026.208853568\\
245	191076.786357767\\
261	184962.249484096\\
276	178705.656527035\\
290	172338.543774861\\
304	165384.190903838\\
317	158321.076747501\\
329	151206.686185649\\
341	143438.659797969\\
352	135663.846745417\\
363	127178.809438938\\
374	117892.825420077\\
384	108675.951631397\\
394	98650.7830326175\\
404	87766.765377001\\
413	77228.7612712581\\
422	66040.6334689652\\
431	54355.8526679945\\
440	42487.5867910588\\
449	31004.0886319386\\
458	20652.4189757934\\
467	12238.9567604941\\
476	6311.64588147001\\
486	2524.53836473267\\
498	659.533673663873\\
511	117.048544296559\\
532	7.13137734560177\\
539	4.32588579711756\\
545	3.41948382799072\\
550	3.08440364486853\\
555	2.92057710426198\\
561	2.82566534794069\\
568	2.76945500148708\\
579	2.71390970666814\\
605	2.61715654225787\\
633	2.53640112683425\\
662	2.47475806731592\\
695	2.42584160809445\\
733	2.38898172975069\\
776	2.36294449235407\\
832	2.34591112432279\\
880	2.34510481148272\\
931	2.35764541924654\\
1006	2.38937963622035\\
1088	2.43376459942458\\
1214	2.50934251926119\\
1485	2.65163660241946\\
1706	2.71050395010769\\
1875	2.72286231900127\\
2266	2.70865003535241\\
2855	2.6647199418271\\
3647	2.61167504588686\\
7545	2.45554645780731\\
10000	2.42749346862265\\
};
\addlegendentry{$\gamma=10^{-6}$};
\end{axis}
\end{tikzpicture}%% 

        \caption{no acceleration ($\accel=0$)}
        \label{fig:linf:gamma:0}
    \end{subfigure}%
    \hfill
    \begin{subfigure}[t]{0.475\textwidth}
        \centering
        % cheap draft mode
    \begin{tikzpicture}

\begin{axis}[%
width=\linewidth,
xmode=log,
xmin=1,
xmax=10000,
xminorticks=true,
ymode=log,
ymin=1,
ymax=10000000,
yminorticks=true,
ylabel={$J_\gamma(u^i)$},
axis x line*=bottom,
axis y line*=left,
legend style={legend cell align=left,align=left,draw=none,font=\footnotesize}
]
\addplot +[ line width=1.5pt]
  table[row sep=crcr]{%
1	17.6216485742651\\
2	80.2837698565678\\
3	42.6493071853466\\
4	26.1860393303207\\
5	17.5393415136577\\
6	12.3754851029674\\
7	8.99586835234042\\
8	6.6431727661924\\
9	4.94911125819187\\
11	2.86794786694354\\
12	2.31154186490963\\
13	1.98606526142922\\
14	1.82125446983049\\
15	1.74906742719584\\
16	1.72021221827924\\
17	1.70899976165441\\
19	1.70276067376167\\
28	1.70135353506155\\
10000	1.70134530845395\\
};
\addlegendentry{$\gamma=10^{-1}$};

\addplot +[line width=1.5pt]
  table[row sep=crcr]{%
1	1737.15305653757\\
2	429.912270713931\\
3	337.644156506394\\
4	275.700504836668\\
5	226.568956596359\\
6	184.280792625861\\
7	145.923471262047\\
8	109.538215478311\\
9	73.7137159190681\\
10	38.3197336458668\\
11	8.55303734373891\\
12	10.950684188818\\
13	85.0818047065063\\
14	23.5058425637391\\
15	7.67557455046111\\
16	3.36947038924973\\
17	2.44444768687516\\
18	2.39092173140039\\
19	2.47209840655521\\
20	2.49583790915531\\
21	2.45667327957585\\
22	2.39927776170935\\
23	2.35658095702221\\
24	2.3380420761554\\
29	2.31992495465578\\
32	2.30483641618485\\
42	2.28369921532776\\
55	2.2758982105457\\
113	2.27508594586332\\
10000	2.27506590621336\\
};
\addlegendentry{$\gamma=10^{-3}$};

\addplot +[line width=1.5pt]
  table[row sep=crcr]{%
1	1736900.6647286\\
2	325536.213283009\\
3	283657.765751608\\
4	243383.86285472\\
5	205351.445543027\\
6	169514.542222649\\
7	135302.198097891\\
8	101799.48347267\\
9	68076.1653712259\\
10	34197.0015206256\\
11	5547.62638846183\\
12	10689.8323366043\\
13	93371.6903011663\\
14	17866.3490515217\\
15	4524.35624293342\\
16	931.642292449679\\
17	195.012653384145\\
18	184.638990232161\\
19	267.181777658599\\
20	282.548939885979\\
21	236.990866551081\\
23	129.218646897894\\
24	110.158607095051\\
25	107.732225447683\\
26	108.559113816924\\
27	105.036782942613\\
28	96.5128835650475\\
29	85.8671734311217\\
31	67.9299835988632\\
35	47.1795634931821\\
37	38.7378570851813\\
39	31.590486242787\\
41	25.2599326896008\\
43	19.6577350162665\\
45	15.012676912489\\
48	9.83617136093238\\
55	3.84383759342788\\
57	3.13049805848275\\
58	2.88585077233289\\
59	2.70478307075397\\
60	2.57576480177876\\
61	2.48834823987495\\
62	2.43190785553866\\
63	2.39577747980634\\
64	2.37321747647983\\
66	2.35175071624348\\
68	2.34626791710094\\
71	2.35334902002743\\
74	2.37266130167769\\
77	2.40823107614189\\
80	2.44730847498732\\
82	2.46264366881086\\
84	2.46710256363177\\
87	2.4590287826506\\
91	2.43047893803188\\
98	2.38328109898039\\
101	2.36961570353466\\
103	2.37037828597089\\
106	2.38640947255103\\
113	2.43690406229058\\
119	2.46827574597372\\
122	2.49684770854304\\
126	2.54934442549771\\
141	2.76524311683168\\
144	2.785402234495\\
147	2.79222408752319\\
150	2.78596783786035\\
154	2.7597373012839\\
158	2.71784550898114\\
164	2.63826171888147\\
177	2.47589212551406\\
183	2.42541409848042\\
189	2.3911149908801\\
196	2.36689791637283\\
202	2.35964942209475\\
224	2.35775033377814\\
236	2.35775924523571\\
248	2.34803027152258\\
326	2.35562339523241\\
425	2.33172816769278\\
739	2.32977990497708\\
10000	2.32954867742355\\
};
\addlegendentry{$\gamma=10^{-6}$};
\end{axis}
\end{tikzpicture}%% 

        \caption{full acceleration ($\accel\approx1$)}
        \label{fig:linf:gamma:1}
    \end{subfigure}
    \caption{$L^\infty$ fitting: convergence for different values of Moreau--Yosida regularization parameter~$\gamma$ without and with acceleration}
    \label{fig:linf:gamma}
\end{figure}
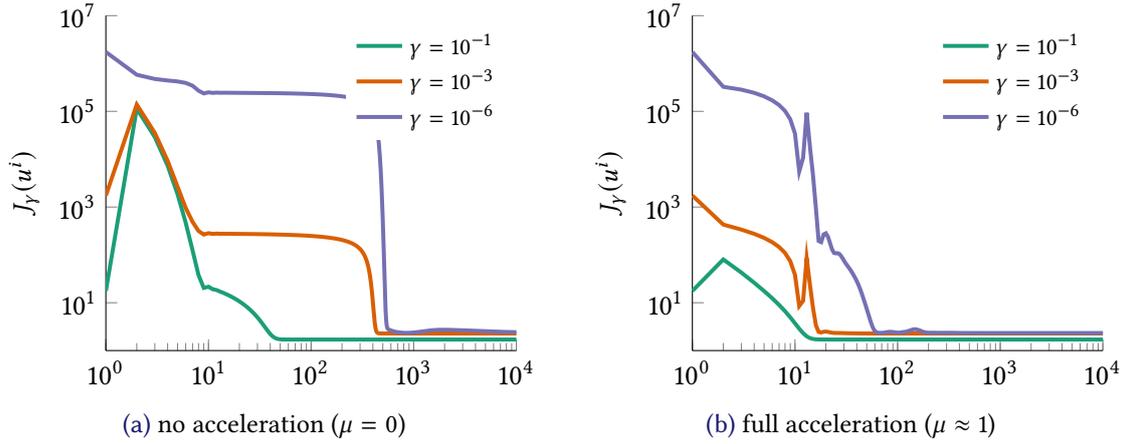

\subsection{State constraints}\label{sec:results:state}

Finally, we consider the state-constrained optimal control problem \eqref{eq:state_problem}. Here, we choose the desired state $y^d=S(u^\dag)$ (with $u^\dag$ again as before) and the constraint $c=0.68$. The control costs are set to $\alpha=10^{-12}$, and we again terminate acceleration (and the algorithm) after $N=10000$ iterations.

As before, \cref{fig:state:accel,fig:state:mesh} illustrate the benefit of acceleration and the mesh independence of the algorithm, respectively.
Since in this example, the solution only becomes feasible for very small values of $\gamma$, the visual comparison of the effect of $\gamma$ on the performance is more difficult. Nevertheless, comparing \cref{fig:state:gamma} with \cref{fig:state:accel} shows again that acceleration reduces the influence of $\gamma$ on the performance, although the effect is much less pronounced in this example.

\begin{figure}[!t]
    \centering
    \begin{minipage}[t]{0.475\textwidth}
        \centering
        % cheap draft mode
    \begin{tikzpicture}

\begin{axis}[%
width=\linewidth,
xmode=log,
xmin=1,
xmax=10000,
xminorticks=true,
ymode=log,
ymin=10000000,
ymax=2000000000000,
yminorticks=true,
ylabel={$J_\gamma(u^i)$},
axis x line*=bottom,
axis y line*=left,
legend style={legend cell align=left,align=left,draw=none,font=\footnotesize}
]
\addplot +[line width=1.5pt]
  table[row sep=crcr]{%
1	1716906611220.19\\
2	222966375774.923\\
3	221193016817.381\\
4	217686343232.436\\
5	210796108864.546\\
6	197461706820.35\\
7	172868708657.541\\
8	136396107737.733\\
9	119464895864.905\\
10	125990404664.21\\
11	122547710184.179\\
12	124086473658.831\\
13	123211094938.87\\
14	123528733671.065\\
16	123281559892.818\\
47	120828762866.929\\
66	119228746189.051\\
86.9999999999999	117344261286.203\\
109	115223408893.541\\
131	112932291950.371\\
153	110446278010.509\\
174	107864064227.332\\
194	105185513191.429\\
213	102411927472.736\\
231	99546465988.0077\\
248	96594631995.2176\\
264	93564828650.2109\\
279	90468967767.6503\\
294	87087979005.4304\\
308	83635679209.2231\\
321	80134441047.4462\\
334	76305968697.8491\\
346	72439195568.5871\\
358	68206957081.2188\\
369	63962956478.0177\\
380	59326157737.9049\\
390	54732758677.385\\
400	49747507124.0096\\
410	44349098488.0017\\
419	39136779748.2307\\
428	33618421609.0426\\
437	27870348079.112\\
446	22042046899.0893\\
455	16380489417.4416\\
464	11229821118.4058\\
473	6967249141.67123\\
483	3586375541.87143\\
494	1472819457.95582\\
510	331357985.889768\\
525	90887222.7859268\\
532	62533152.7374839\\
538	52125134.1176402\\
543	48080738.0825237\\
548	46133808.9448267\\
553	45250857.6135172\\
558	44886082.7033625\\
566	44734280.0383713\\
609	44501712.1779253\\
872	42021421.8881796\\
1037	41264359.5021197\\
1293	40549860.425597\\
1845	39664610.56361\\
3706.99999999999	38200844.1744777\\
5540	37569675.755648\\
8000	37202003.9916429\\
10000	37064513.2313392\\
};
\addlegendentry{$\accel=0$};

\addplot +[line width=1.5pt]
  table[row sep=crcr]{%
1	1716906611220.19\\
2	164170530699.792\\
3	143187817471.246\\
4	122988035506.828\\
5	103902961537.973\\
6	85914728642.205\\
7	68739251789.8715\\
8	51917665499.7787\\
9	34975275240.6864\\
10	17904853996.2533\\
11	3220546884.84653\\
12	8061695483.3424\\
13	85920187240.4363\\
15	3729145796.66866\\
17	123059395.186608\\
18	166280399.455752\\
19	251954628.183593\\
20	276795451.984945\\
21	244888558.822532\\
22	188890332.988829\\
23	137764912.564962\\
24	105377331.230047\\
25	91880864.483021\\
26	88434587.7437103\\
27	86767304.4416075\\
28	83644657.2129951\\
30	76294245.3040695\\
31	74071926.1684681\\
33	71039745.7045219\\
34	69011922.505133\\
35	66444187.4645932\\
40	54004827.7792133\\
42	50765393.0598385\\
44	48152802.3885542\\
46	46032838.8011942\\
48	44332745.7912161\\
50	43039820.5545534\\
52	42159324.1360856\\
54	41652017.2125638\\
56	41446496.9975867\\
58	41477660.7016473\\
60	41695989.4991386\\
63	42278012.7087963\\
74	44738979.6485641\\
77	45042163.2040812\\
80	45103119.2080227\\
83	44921170.7231769\\
86	44521593.0198585\\
90	43728403.0066142\\
95	42500948.0692236\\
106	39989474.0673025\\
111	39272182.5112756\\
116	38863282.6395855\\
121	38693353.0372414\\
132	38638924.4234915\\
140	38474123.0149639\\
149	38062737.9452609\\
162	37497805.9193004\\
170	37389526.3309136\\
183	37480169.5483861\\
194	37469016.3661231\\
207	37223041.5410955\\
221	37028819.4562451\\
233	37092126.6845523\\
253	37211152.3784658\\
435	36813653.3595974\\
669	36807267.6418637\\
773	36732530.325782\\
1093	36703455.9060965\\
2230	36676041.5618061\\
10000	36661316.2890655\\
};
\addlegendentry{$\accel\approx 1$};
\end{axis}
\end{tikzpicture}%% 

        \caption{State constraints: convergence without ($\accel=0$) and with ($\accel\approx 1$) acceleration}
        \label{fig:state:accel}
    \end{minipage}%
    \hfill
    \begin{minipage}[t]{0.475\textwidth}
        \centering
        % cheap draft mode
    \begin{tikzpicture}

\begin{axis}[%
width=\linewidth,
xmode=log,
xmin=1,
xmax=10000,
xminorticks=true,
ymode=log,
ymin=10000000,
ymax=2000000000000,
yminorticks=true,
ylabel={$J_\gamma(u^i)$},
axis x line*=bottom,
axis y line*=left,
legend style={legend cell align=left,align=left,draw=none,font=\footnotesize}
]
\addplot +[ line width=1.5pt]
  table[row sep=crcr]{%
1	1720673349145.76\\
2	164700429125.843\\
3	143255501134.97\\
4	122649881932.017\\
5	103219430334.51\\
6	84939548374.6981\\
7	67515213658.74\\
8	50483387095.8033\\
9	33395241040.1924\\
10	16368677497.5512\\
11	2372154833.78907\\
12	10800608890.845\\
13	82979926138.7959\\
15	3755567385.34408\\
17	125114538.508434\\
18	171233001.415457\\
19	259704297.898341\\
20	285970498.850571\\
21	253926519.535561\\
22	196553339.56191\\
23	143473661.132382\\
24	109247798.851773\\
25	94593549.9085172\\
26	90715479.7809698\\
27	89031572.039145\\
28	85981325.96563\\
30	78615751.6509075\\
31	76382074.7467095\\
33	73445849.0681305\\
34	71465350.6481511\\
35	68903074.6430566\\
37	63118242.830527\\
39	58151591.0984015\\
41	54397489.1564124\\
43	51441507.7614654\\
45	49045707.7221702\\
47	47107882.1315991\\
49	45581280.304438\\
51	44472770.8369815\\
53	43771887.4870049\\
55	43420089.146363\\
57	43343568.4794808\\
59	43483185.0909953\\
62	43999034.2255783\\
66	45017249.4739951\\
72	46503996.3195272\\
75	47001745.7595198\\
78	47266369.3149255\\
81	47277053.2767003\\
84	47038989.837988\\
86.9999999999999	46583126.3727489\\
91	45721221.5237878\\
96	44424553.2426953\\
106	42018644.0769882\\
111	41221260.1317093\\
116	40744725.5331145\\
121	40526913.6002883\\
129	40453394.1589854\\
139	40318261.3280848\\
147	39992657.0499719\\
165	39244910.0465872\\
174	39200525.3149988\\
197	39247920.1904745\\
230	38834991.0826877\\
259	38976973.9935904\\
296	38827857.1585432\\
321	38743758.2270852\\
351	38683066.0587448\\
1851	38463111.1168895\\
10000	38441723.5614358\\
};
\addlegendentry{$n=100$};

\addplot +[ line width=1.5pt]
  table[row sep=crcr]{%
1	1716906611220.19\\
2	164170530699.792\\
3	143187817471.246\\
4	122988035506.828\\
5	103902961537.973\\
6	85914728642.205\\
7	68739251789.8715\\
8	51917665499.7787\\
9	34975275240.6864\\
10	17904853996.2533\\
11	3220546884.84653\\
12	8061695483.3424\\
13	85920187240.4363\\
15	3729145796.66866\\
17	123059395.186608\\
18	166280399.455752\\
19	251954628.183593\\
20	276795451.984945\\
21	244888558.822532\\
22	188890332.988829\\
23	137764912.564962\\
24	105377331.230047\\
25	91880864.483021\\
26	88434587.7437103\\
27	86767304.4416075\\
28	83644657.2129951\\
30	76294245.3040695\\
31	74071926.1684681\\
33	71039745.7045219\\
34	69011922.505133\\
35	66444187.4645932\\
40	54004827.7792133\\
42	50765393.0598385\\
44	48152802.3885542\\
46	46032838.8011942\\
48	44332745.7912161\\
50	43039820.5545534\\
52	42159324.1360856\\
54	41652017.2125638\\
56	41446496.9975867\\
58	41477660.7016473\\
60	41695989.4991386\\
63	42278012.7087963\\
74	44738979.6485641\\
77	45042163.2040812\\
80	45103119.2080227\\
83	44921170.7231769\\
86	44521593.0198585\\
90	43728403.0066142\\
95	42500948.0692236\\
106	39989474.0673025\\
111	39272182.5112756\\
116	38863282.6395855\\
121	38693353.0372414\\
132	38638924.4234915\\
140	38474123.0149639\\
149	38062737.9452609\\
162	37497805.9193004\\
170	37389526.3309136\\
183	37480169.5483861\\
194	37469016.3661231\\
207	37223041.5410955\\
221	37028819.4562451\\
233	37092126.6845523\\
253	37211152.3784658\\
435	36813653.3595974\\
669	36807267.6418637\\
773	36732530.325782\\
1093	36703455.9060965\\
2230	36676041.5618061\\
10000	36661316.2890655\\
};
\addlegendentry{$n=1000$};

\addplot +[line width=1.5pt]
  table[row sep=crcr]{%
1	1716529215583.19\\
2	164116041070.261\\
3	143179398465.307\\
4	123020172449.804\\
5	103969724207.602\\
6	86010850359.1602\\
7	68860577067.7372\\
8	52060550181.0785\\
9	35133767017.2122\\
10	18061217871.4591\\
11	3312959441.23603\\
12	7802157158.33435\\
13	86135049071.2171\\
15	3729956996.38258\\
17	122892952.566104\\
18	165728526.511198\\
19	251150017.070569\\
20	275888666.507352\\
21	244018397.925646\\
22	188159852.866594\\
23	137219462.863045\\
24	105001188.213128\\
25	91609652.9596526\\
26	88202332.4370686\\
27	86536692.6707958\\
28	83407699.1362862\\
30	76058318.8453289\\
31	73836552.3423554\\
33	70795353.8884382\\
34	68763544.3783926\\
35	66195864.7956779\\
40	53787001.6918652\\
42	50554031.5157558\\
44	47945722.473818\\
46	45829124.0201011\\
48	44131808.8895968\\
50	42841196.0829265\\
52	41962230.9775037\\
54	41455445.8176618\\
56	41249622.5497429\\
58	41279918.6661438\\
60	41496891.243034\\
63	42075862.3281607\\
74	44520245.4164176\\
77	44819528.9195383\\
80	44877387.7023201\\
83	44693431.8986284\\
86	44293121.6014471\\
90	43500978.4240711\\
95	42277766.5013572\\
106	39782624.8114267\\
111	39073093.973211\\
116	38670597.429509\\
121	38505099.78295\\
144	38124776.1869434\\
167	37220520.7297068\\
176	37230020.5836035\\
193	37292131.7684089\\
205	37080019.5015\\
220	36848209.7011525\\
232	36898643.8434784\\
254	37022216.1218893\\
395	36605579.5898593\\
484	36647003.1197037\\
4498	36481318.7716386\\
10000	36477431.2020913\\
};
\addlegendentry{$n=10000$};
\end{axis}
\end{tikzpicture}%% 

        \caption{State constraints: convergence for different mesh sizes $n$}
        \label{fig:state:mesh}
    \end{minipage}
\end{figure}
\begin{figure}[!t]
    \centering
    \begin{subfigure}[t]{0.475\textwidth}
        \centering
        % cheap draft mode
    \begin{tikzpicture}

\begin{axis}[%
width=\linewidth,
xmode=log,
xmin=1,
xmax=10000,
xminorticks=true,
ymode=log,
ymin=10000000,
ymax=3000000000,
yminorticks=true,
ylabel={$J_\gamma(u^i)$},
axis x line*=bottom,
axis y line*=left,
legend style={legend cell align=left,align=left,draw=none,font=\footnotesize}
]
\addplot +[line width=1.5pt]
  table[row sep=crcr]{%
1	36661473.6645807\\
2	36798236.9910093\\
4	36668966.249646\\
141	36659954.1015787\\
10000	36659723.8677087\\
};
\addlegendentry{$\gamma=10^{-3}$};
\addplot +[line width=1.5pt]
  table[row sep=crcr]{%
1	38411554.6904718\\
2	37239547.0157672\\
5	37079209.8784553\\
29	36900949.7167395\\
359	36792850.0566096\\
851	36659730.7721879\\
10000	36659724.4993015\\
};
\addlegendentry{$\gamma=10^{-6}$};
\addplot +[line width=1.5pt]
  table[row sep=crcr]{%
1	1787365258.64501\\
2	477994930.704864\\
3	474364645.861855\\
4	467364238.294522\\
5	453654651.70033\\
6	427137363.790161\\
7	378246092.047358\\
8	305775838.465119\\
9	272153439.924509\\
10	285110247.519896\\
11	278274292.304402\\
12	281329656.97062\\
13	279591502.556194\\
14	280222211.415664\\
16	279731437.440888\\
48	274698989.878428\\
69	271165992.120742\\
92	267014541.124996\\
116	262325627.291992\\
140	257217373.410403\\
163	251865722.427397\\
185	246260172.677562\\
206	240391444.672279\\
226	234252199.017198\\
245	227837975.290171\\
263	221148395.934398\\
280	214188675.141687\\
296	206971457.783488\\
311	199518981.045373\\
325	191865493.645902\\
338	184059777.267152\\
351	175477119.424224\\
363	166763369.749113\\
375	157182883.805742\\
386	147544110.003283\\
397	136999866.732363\\
408	125485749.959673\\
419	112988963.869543\\
430	99608972.4635388\\
441	85658927.086776\\
453	70581277.8574926\\
484	42768959.8048352\\
492	39768371.8357848\\
499	38256720.5169645\\
505	37521729.3614714\\
512	37063950.8237691\\
521	36808832.5027342\\
535	36695734.6347681\\
579.999999999999	36669772.4003518\\
10000	36660299.2258162\\
};
\addlegendentry{$\gamma=10^{-9}$};
\end{axis}
\end{tikzpicture}%% 

        \caption{no acceleration ($\accel=0$)}
        \label{fig:state:gamma:0}
    \end{subfigure}%
    \hfill
    \begin{subfigure}[t]{0.475\textwidth}
        \centering
        % cheap draft mode
    \begin{tikzpicture}

\begin{axis}[%
width=\linewidth,
xmode=log,
xmin=1,
xmax=10000,
xminorticks=true,
ymode=log,
ymin=10000000,
ymax=3000000000,
yminorticks=true,
ylabel={$J_\gamma(u^i)$},
axis x line*=bottom,
axis y line*=left,
legend style={legend cell align=left,align=left,draw=none,font=\footnotesize}
]
\addplot +[line width=1.5pt]
  table[row sep=crcr]{%
1	36661473.6645807\\
10000	36659723.8677088\\
};
\addlegendentry{$\gamma=10^{-3}$};
\addplot +[line width=1.5pt]
  table[row sep=crcr]{%
1	38411554.6904718\\
2	36984329.5769192\\
8	36761364.0961917\\
12	36670064.4083411\\
13	36753363.7744458\\
15	36664916.5178411\\
72	36659732.956552\\
10000	36659723.9908135\\
};
\addlegendentry{$\gamma=10^{-6}$};
\addplot +[line width=1.5pt]
  table[row sep=crcr]{%
1	1787365258.64501\\
2	360958579.313762\\
3	319266880.423506\\
4	279148611.227448\\
5	241264357.962465\\
6	205580455.971257\\
7	171536802.682966\\
8	138231941.336053\\
9	104745962.850282\\
10	71118987.1488369\\
11	42486406.4222238\\
12	46994767.3088551\\
13	130248232.113006\\
14	56856085.449943\\
15	41847433.359595\\
16	37701983.6235273\\
17	36824242.1346067\\
18	36837346.7468412\\
20	37009517.0795462\\
28	36746905.100627\\
55	36667339.0935269\\
10000	36659726.3185263\\
};
\addlegendentry{$\gamma=10^{-9}$};
\end{axis}
\end{tikzpicture}%% 

        \caption{full acceleration ($\accel\approx1$)}
        \label{fig:state:gamma:1}
    \end{subfigure}
    \caption{State constraints: convergence for different values of Moreau--Yosida regularization parameter~$\gamma$ without and with acceleration}
    \label{fig:state:gamma}
\end{figure}
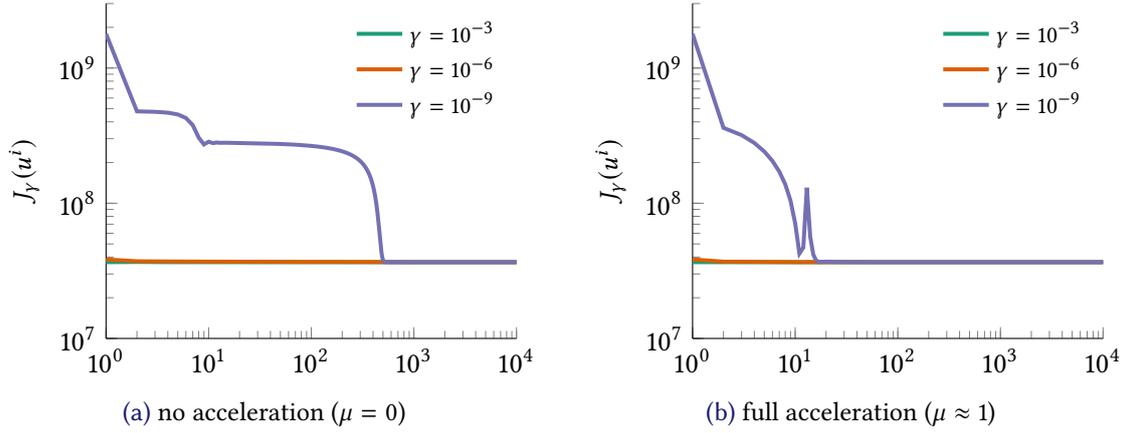

\section{Conclusion}

Accelerated primal-dual extragradient methods with nonlinear operators can be formulated and analyzed in function space. Their convergence rests on metric regularity of the corresponding saddle-point inclusion, which can be verified for the class of PDE-constrained optimization problems considered here after introducing a Moreau--Yosida regularization. Unlike semismooth Newton methods (which also require Moreau--Yosida regularization in function space, cf., e.g., \cite{ClasonJin:2011,Clason:2012,ItoKunischState}), however, in practice it is not necessary for convergence to choose $\gamma$ sufficiently large. Hence, no continuation or warm starts are required. In addition, formulating and analyzing the algorithm in function space leads to mesh independence.
These properties are observed in our numerical examples.

This work can be extended in a number of directions. We plan to investigate the possibility of obtaining convergence estimates on the primal variable alone under lesser assumptions.
An alternative would be to exploit the uniform stability with respect to regularization for fixed discretization, and with respect to discretization for fixed regularization, to obtain a combined convergence for a suitably chosen net $(\gamma,h)\to (0,0)$. This is related to the adaptive regularization and discretization of inverse problems \cite{KaltenbacherKirchnerVexler11,ClasonKaltenbacherWachsmuth15}.
Furthermore, it would be of interest to extend our analysis to include nonsmooth regularizers $G$, which were excluded in the current work for the sake of the presentation.

\appendix

\section{A slightly improved version of \texorpdfstring{\cite[Lem.~4.1]{ClasonValkonen15}}{[9, Lem. 4.1]}}
\label{app:lem41}

Here we improve the sufficient condition \cite[Lem.~4.1 (i)]{ClasonValkonen15} to allow for a more general linear operator $\bar F$ than $\bar F=\gamma I$ as well as for a more general cone $\DerivConeGSimple$ than $\DerivConeGSimple=X$ arising from the graphical derivative of $\partial F^*_\gamma$ and $\partial G$, respectively. These modifications are necessary for the treatment of state constraints; the latter is also the basis for extending the analysis in \cite{ClasonValkonen15} to cover pointwise constraints on the primal variable as mentioned in \cref{rem:feasibility}.

\begin{lemma}
    \label{lemma:general-polar-projection-lower-bound-general}
    Let $\DerivCone = \DerivConeGSimple \times \DerivConeFSimple \subset X \times Y$ be a cone, and let
    $\bar G: X \to X$, $\bar F: Y \to Y$, and $\bar K: X \to Y$ be bounded linear operators with $\bar G$ and $\bar F$ self-adjoint.
    Define
    \begin{equation}
        T \defeq 
        \begin{pmatrix}
            \bar G & \bar K^* \\
            - \bar K & \bar F
        \end{pmatrix}.
    \end{equation}
    \clearpage
    Suppose both $\bar G \succeq_{\DerivConeGSimple}\! c_G^2 I$ and $\bar F \succeq_{\DerivConeFSimple}\! c_F^2 I$ for some $c_G, c_F > 0$, i.e.,
    \begin{equation}
        \iprod{\bar G\coPrimal}{\coPrimal}  \ge c_G^2 \norm{\coPrimal}^2 \qquad (\coPrimal \in \DerivConeGSimple)\qquad\text{and} \qquad
        \iprod{\bar F\coDual}{\coDual} \ge c_F^2 \norm{\coDual}^2 \qquad (\coDual \in \DerivConeFSimple).
    \end{equation}
    Then there exists $c>0$ with
    \begin{equation}
        \label{eq:general-polar-projection-lower-bound}
        \inf_{\dcVar \in \polar\DerivCone} \norm{T^* w - \dcVar}^2 \ge c\norm{w}^2 \qquad (w \in \DerivCone).
    \end{equation}
\end{lemma}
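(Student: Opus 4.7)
The plan is to establish the bound by a Cauchy--Schwarz pairing against $w$ itself, reducing the problem to a lower bound on the quadratic form $\iprod{w}{Tw}$. For any $w \in \DerivCone$ and $\dcVar \in \polar \DerivCone$, Cauchy--Schwarz gives
\[
    \norm{T^*w - \dcVar}\,\norm{w} \ge \iprod{T^*w - \dcVar}{w} = \iprod{T^*w}{w} - \iprod{\dcVar}{w}.
\]
The defining property of the polar cone guarantees $\iprod{\dcVar}{w} \le 0$, so the right-hand side is bounded below by $\iprod{T^*w}{w} = \iprod{w}{Tw}$.

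The key structural step is to observe that the off-diagonal block of $T$ contributes nothing to this quadratic form. Writing $w = (\coPrimal, \coDual) \in \DerivConeGSimple \times \DerivConeFSimple$ and expanding the block action yields
\[
    \iprod{w}{Tw} = \iprod{\coPrimal}{\bar G \coPrimal} + \iprod{\coPrimal}{\bar K^* \coDual} - \iprod{\coDual}{\bar K \coPrimal} + \iprod{\coDual}{\bar F \coDual},
\]
and the two middle terms cancel by the adjoint relation $\iprod{\coPrimal}{\bar K^* \coDual} = \iprod{\bar K \coPrimal}{\coDual}$, reflecting the skew-adjointness of the off-diagonal part of $T$. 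Applying the coercivity hypotheses componentwise---valid precisely because $\coPrimal \in \DerivConeGSimple$ and $\coDual \in \DerivConeFSimple$---then yields
\[
    \iprod{w}{Tw} \ge c_G^2 \norm{\coPrimal}^2 + c_F^2 \norm{\coDual}^2 \ge c_0 \norm{w}^2, \quad c_0 \defeq \min(c_G^2, c_F^2).
\]

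Combining the two estimates gives $\norm{T^*w - \dcVar} \ge c_0 \norm{w}$ uniformly in $\dcVar \in \polar \DerivCone$ and $w \in \DerivCone$; squaring and taking the infimum over $\dcVar$ produces \eqref{eq:general-polar-projection-lower-bound} with $c = c_0^2$. There is no serious obstacle: the whole argument is a one-line expansion exploiting the saddle-point structure of $T$ (skew-adjoint off-diagonal, coercive self-adjoint diagonal) together with the elementary polar cone pairing. The generalization from the original hypotheses ($\bar G = I$, $\DerivConeGSimple = X$, $\bar F = \gamma I$) to the ones stated here goes through verbatim, since both the cross-term cancellation and the coercivity step are purely algebraic and use nothing beyond self-adjointness of $\bar G, \bar F$ and their cone-coercivity.
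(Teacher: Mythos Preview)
Your argument is correct, and it is genuinely different from---and considerably simpler than---the paper's own proof. The paper proceeds by directly expanding $\norm{T^*w-\dcVar}^2$ into block components, then inserts auxiliary parameters $\lambda,\beta>0$ (via terms of the form $\iprod{(\Lambda\bar K-\Lambda\bar K)\coPrimal}{\coDual}$), applies Young's inequality to absorb the cross terms and the polar-cone elements $\mu,\nu$, and finally tunes $\lambda=\beta<2\min\{c_G^2,c_F^2\}$ to kill the remaining $\iprod{\coPrimal}{\bar K^*\coDual}$ coupling. Your route sidesteps all of this: by pairing $T^*w-\dcVar$ against $w$ via Cauchy--Schwarz and using the polar cone inequality $\iprod{\dcVar}{w}\le 0$, you reduce immediately to the quadratic form $\iprod{w}{Tw}$, where the skew-adjoint off-diagonal block cancels by itself and no auxiliary parameters or Young estimates are needed. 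Both approaches yield essentially the same constant $c=\min(c_G^2,c_F^2)^2$, but yours isolates the structural reason (saddle-point skew symmetry plus diagonal cone-coercivity) in a single line, whereas the paper's expansion obscures it behind algebraic manipulation.
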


\begin{proof}
    With $w=(\coPrimal,\coDual) \in \DerivCone = \DerivConeGSimple \times \DerivConeFSimple$, and $\dcVar=(\mu, \nu) \in \polar\DerivCone$,
    we expand
    \begin{equation}
        \label{eq:general-polar-projection-lower-bound-est0}
        \begin{aligned}[t]
            \norm{T^*w-\dcVar}^2
            & =
            \norm{\bar G \coPrimal-\bar K^*\coDual-\mu}^2
            +
            \norm{\bar K \coPrimal+\bar F\coDual-\nu}^2
            \\
            & =
            \norm{\bar G \coPrimal}^2 +\norm{\bar K^*\coDual+\mu}^2-2\iprod{(\bar K\bar G-\bar F \bar K)\coPrimal}{\coDual}
            \\ \MoveEqLeft[-1]        
            +\norm{\bar K\coPrimal-\nu}^2
            +\norm{\bar F\coDual}^2
            -2\iprod{\bar F\coDual}{\nu}
            -2\iprod{\bar G\xi}{\mu}.
        \end{aligned}
    \end{equation}
    Let $\lambda,\beta > 0$ be arbitrary.
    We can insert $0=\iprod{(\Lambda \bar K - \Lambda \bar K)\coPrimal}{\coDual}$ and $0=\iprod{(\bar K M - \bar K M)\coPrimal}{\coDual}$ into \eqref{eq:general-polar-projection-lower-bound-est0}.
    We can also use $\iprod{\coDual}{(\Lambda-\bar F)\nu} \ge 0$ for all $\nu \in \polar {\DerivConeFSimple}$ and $\coDual \in {\DerivConeFSimple}$ and similarly $\iprod{\coPrimal}{(M-\bar G)\mu} \ge 0$ for all $\mu \in \polar {\DerivConeGSimple}$ and $\coPrimal \in {\DerivConeGSimple}$.
    Thus
    \begin{equation}
        \label{eq:general-polar-projection-lower-bound-lambda-introduce}
        \begin{aligned}[t]
            \norm{T^*w-\dcVar}^2
            & =
            \norm{\bar G \coPrimal}^2+\norm{\bar K^*\coDual+\mu}^2-2\iprod{(\bar K \bar G - \bar K M + \Lambda\bar K- \bar F\bar K)\coPrimal}{\coDual}
            \\ \MoveEqLeft[-1]
            +2\iprod{\bar K \coPrimal}{\Lambda\coDual}
            -2\iprod{\bar K^* \coDual}{M\coPrimal}
            +\norm{\bar K\coPrimal-\nu}^2
            +\norm{\bar F\coDual}^2
            -2\iprod{\bar F\coDual}{\nu}
            -2\iprod{\bar G\coPrimal}{\mu}
            \\
            & \ge
            \norm{\bar G \coPrimal}^2+\norm{\bar K^*\coDual+\mu}^2-2(\beta-\lambda)\iprod{\coPrimal}{\bar K^*\coDual}
            \\ \MoveEqLeft[-1]
            +2\iprod{\bar K\coPrimal-\nu}{\Lambda\coDual}
            -2\iprod{\bar K^* \coDual+\mu}{M\coPrimal}
            +\norm{\bar K\coPrimal-\nu}^2
            +\norm{\bar F\coDual}^2.
        \end{aligned}
    \end{equation}
    This we further estimate using Young's inequality as
    \begin{equation}
        \norm{T^*w-\dcVar}^2
        \ge
        \bigl(\norm{\bar G \coPrimal}^2-\norm{M\coPrimal}^2\bigr)
        +\bigl(\norm{\bar F\coDual}^2-\norm{\Lambda\coDual}^2\bigr) 
        -2(\beta-\lambda)\iprod{\coPrimal}{\bar K^* \coDual}.
    \end{equation}
    Expanding the definitions of $M$ and $\Lambda$ and using $\bar G \succeq_{\DerivConeGSimple}\! c_G^2 I$ and $\bar F \succeq_{\DerivConeFSimple}\! c_F^2 I$, we get
    \begin{equation}
        \label{eq:general-polar-projection-lower-bound-est2}
        \norm{T^*w-\dcVar}^2
        \ge
        (2c_G^2 \beta - \beta^2)\norm{\coPrimal}^2
        +(2c_F^2 \lambda-\lambda^2)\norm{\coDual}^2
        -2(\beta-\lambda)\iprod{\coPrimal}{\bar K^* \coDual}.
    \end{equation}
    Taking $\beta=\lambda$ and $0 < \lambda < 2\min\{c_G^2, c_F^2\}$, we thus see that \eqref{lemma:general-polar-projection-lower-bound-general} holds with $c=c(c_F, c_G)$.
\end{proof}

\section{Second-order generalized derivative for state constraints}\label{app:stateconstraints}

In this appendix we give the the pointwise characterization of the graphical derivative of $\partial F^*$ given by 
\begin{equation}
    \label{eq:state-constraints-fstar}
    \partial f^*(x,z) = \begin{cases}
        \{c\} & z>\alpha^{-1}( c-\target(x)),\\
        \{\alpha z+\target(x)\} & z\leq \alpha^{-1}(c-\target(x)),
    \end{cases}
\end{equation}
required for the verification of \eqref{eq:fstar-polar-form}.
We begin with the (convexified) graphical derivative of \eqref{eq:state-constraints-fstar}, where from now on we suppress the dependence on $x\in\Omega$ for the sake of presentation.
\begin{lemma}
    For $\partial f^*$ as in \eqref{eq:state-constraints-fstar}, we have 
    \begin{equation}
        \label{eq:d-subdiff-state-constr-1d}
        D(\subdiff f^*)(\Dual|\coFdVar)(\dir{\Dual})
        =
        \begin{cases}
            0, & \alpha \Dual > c - \target,\, \coFdVar=c, \\
            \alpha\dir{\Dual}, & \alpha \Dual < c - \target,\, \coFdVar=\alpha v+\target, \\
            0, & \alpha v=c-\target,\, \coFdVar=c,\, \dir{\Dual} \ge 0, \\
            \alpha\dir{\Dual}, & \alpha v=c-\target,\, \coFdVar=c,\, \dir{\Dual} < 0,
        \end{cases}
    \end{equation}
    and
    \begin{equation}
        \label{eq:d-subdiff-state-constr-1d-conv}
        \widetilde{D(\subdiff f^*)}(\Dual|\coFdVar)(\dir{\Dual})
        =
        \begin{cases}
            0, & \alpha \Dual > c - \target,\, \coFdVar=c, \\
            \alpha \dir{\Dual}, & \alpha \Dual < c - \target,\, \coFdVar=\alpha v+\target, \\
            (-\infty, 0], & \alpha v=c-\target,\, \coFdVar=c,\, \dir{\Dual} \ge 0, \\
            \alpha \dir{\Dual}+(-\infty, 0], & \alpha v=c-\target,\, \coFdVar=c,\, \dir{\Dual} < 0.
        \end{cases}
    \end{equation}
\end{lemma}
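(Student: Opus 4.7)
The plan is to work directly from the definitions \eqref{eq:tangent} and \eqref{eq:graphderiv-0}. The key observation is that the graph of $\subdiff f^*$ in the $(\Dual, \coFdVar)$-plane decomposes as the union of two half-lines
\[
    \Gamma_1 = \{(\Dual, c) \mid \alpha \Dual \geq c - \target\},
    \qquad
    \Gamma_2 = \{(\Dual, \alpha\Dual + \target) \mid \alpha\Dual \leq c - \target\},
\]
joined at the single kink point $(\Dual^*, c)$ with $\alpha\Dual^* = c - \target$ (both formulas agree there). Each $\Gamma_i$ is a smooth one-dimensional manifold, so the tangent cone at any relative-interior point is just the one-dimensional line spanning it, and at the kink point it is the union of the two one-sided rays along $\Gamma_1$ and $\Gamma_2$ pointing away from the kink.

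From this geometric picture I would read off \eqref{eq:d-subdiff-state-constr-1d} directly. For $\alpha\Dual > c-\target$ we sit in the relative interior of $\Gamma_1$, whose tangent is $\R(1,0)$, yielding $D(\subdiff f^*)(\Dual|c)(\dir\Dual)=\{0\}$. For $\alpha\Dual < c - \target$ we are on $\Gamma_2$, whose tangent is $\R(1,\alpha)$, giving $D(\subdiff f^*)(\Dual|\alpha\Dual + \target)(\dir\Dual) = \{\alpha\dir\Dual\}$. At the kink, sequences $(\Dual^i,\coFdVar^i)\in\graph\subdiff f^*$ approaching $(\Dual^*, c)$ must eventually lie entirely in $\Gamma_1$ or entirely in $\Gamma_2$, so the rescaled differences from \eqref{eq:tangent} contribute exactly the ray $\{(\dir\Dual, 0) : \dir\Dual \geq 0\}$ from the $\Gamma_1$ side and $\{(\dir\Dual, \alpha\dir\Dual) : \dir\Dual \leq 0\}$ from the $\Gamma_2$ side; slicing this tangent set at each fixed $\dir\Dual$ produces the last two cases of \eqref{eq:d-subdiff-state-constr-1d}.

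For the convexification I take the convex hull of $\graph D(\subdiff f^*)(\Dual^*|c)$ in the $(\dir\Dual,\dir{\coFdVar})$-plane, which is the conical hull of the two generators $(1,0)$ and $(-1,-\alpha)$, i.e.\ $\{s(1,0)+t(-1,-\alpha) \mid s,t\geq 0\}$. Parametrising by $\dir\Dual = s-t$ and $\dir{\coFdVar} = -\alpha t$ and eliminating $s,t\geq 0$ yields the inequalities $\dir{\coFdVar}\leq 0$ and $\dir{\coFdVar}\leq \alpha\dir\Dual$; slicing at fixed $\dir\Dual\geq 0$ gives $(-\infty,0]$, and at fixed $\dir\Dual<0$ gives $(-\infty,\alpha\dir\Dual] = \alpha\dir\Dual + (-\infty,0]$, which is exactly \eqref{eq:d-subdiff-state-constr-1d-conv}. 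The only delicate point in the argument is the kink case, where one must verify that no tangent directions other than those along the two half-lines appear; this follows because any sequence in $\graph\subdiff f^*$ converging to $(\Dual^*, c)$ decomposes, after passing to a subsequence, into points entirely on $\Gamma_1$ or entirely on $\Gamma_2$, and the rescaled differences are then constrained to the corresponding ray. Everything else is routine linear algebra.
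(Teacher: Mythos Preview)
Your proposal is correct and follows essentially the same route as the paper: the paper also works directly from the definition of the tangent cone, proceeding by casewise inspection of the piecewise-linear graph (smooth points versus the kink) and then taking the convex hull of the resulting two rays to obtain the convexified graphical derivative. Your presentation is slightly more geometric where the paper writes out the difference-quotient limits explicitly, but the argument is the same.
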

\begin{proof}
    The claim is best seen by inspecting \cref{fig:state-constr-1d};
    for completeness we however sketch the proof based on casewise inspection of \eqref{eq:state-constraints-fstar}.
    \begin{enumerate}[label=(\roman*)]
        \item     If $\alpha \Dual \ne c-\target$, we have $\partial f^*(\Dual)=\{\grad {(f^*)}(\Dual)\}$ with $\grad {(f^*)}(\Dual)$ differentiable. Computing these differentials yields the first two cases of \eqref{eq:d-subdiff-state-constr-1d}, where the constraints on $\coFdVar$ come from $\coFdVar=\grad {(f^*)}(\Dual)$.

        \item    If $\alpha \Dual = c-\target$, we have $\partial f^*(\Dual)=\{c\}$, so we need $\coFdVar=c$. Approaching $\Dual$ with $v^i=v+t^i \dir\Dual$ with $\dir\Dual \ge 0$ and $t^i \downto 0$, we have
            \[
                \limsup_{i \to \infty} \frac{\subdiff f^*(\Dual^i)-\zeta}{t^i}
                =
                \limsup_{i \to \infty} \frac{c-c}{t^i}=\{0\}.
            \]
            This gives the third case of \eqref{eq:d-subdiff-state-constr-1d}.

        \item If $\dir\Dual<0$, we obtain
            \[
                \limsup_{i \to \infty} \frac{\subdiff f^*(\Dual^i)-\zeta}{t^i}
                =
                \limsup_{i \to \infty} \frac{\alpha(\Dual+t^i\dir\Dual)+\target-c}{t^i}
                =
                \limsup_{i \to \infty} \frac{\alpha t^i\dir\Dual}{t^i}=\{\alpha\dir\Dual\}.
            \]    
            This gives the fourth case of \eqref{eq:d-subdiff-state-constr-1d}.

    \end{enumerate}

    Finally, the first two cases of the convexification \eqref{eq:d-subdiff-state-constr-1d-conv} correspond directly to those of \eqref{eq:d-subdiff-state-constr-1d}, while the last two cases come from taking the convex hull of the set
    \[
        A:=([0, \infty) \times \{0\})
        \union
        \left\{(\dir\Dual, \alpha\dir\Dual) \mid \dir\Dual < 0\right\}
    \]
    corresponding to the last two cases of \eqref{eq:d-subdiff-state-constr-1d}, which is given by
    \begin{equation}
        \begin{split}
            \conv A =  ([0, \infty) \times (-\infty, 0])
            \union
            \left\{\{\dir\Dual\} \times (-\infty, \alpha\dir\Dual] \mid \dir\Dual < 0\right\}.
            \qedhere
        \end{split}
    \end{equation}
\end{proof}
\begin{figure}[t]
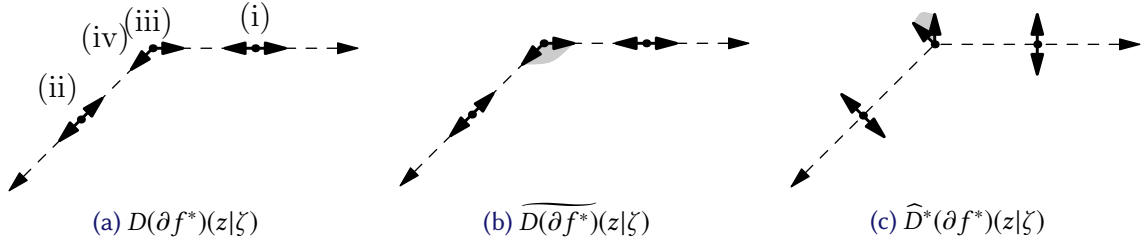

    \centering
    \begin{subfigure}{0.30\textwidth}
        \centering
        % DF
        \begin{asy}
            unitsize(45, 45);
            real l=1.7;
            real eps=0.25;
            real c=1;
            pair xx=(0, c);
            pair gfstart=xx+l*(-.7, -.7);
            pair gfend=(l, c);
            path gf=gfstart--xx--gfend;
            draw(gf, dashed, Arrows);

            draw((0.5*l-eps, c)--(0.5*l+eps, c), linewidth(1.1), Arrows);
            dot((0.5*l, c));
            label("(i)", (0.5*l, c), 1.5*N);

            pair p=(xx+gfstart)/2;
            pair d=eps/sqrt(2)*(1,1);
            draw((p-d)--(p+d), linewidth(1.1), Arrows);
            dot(p);
            label("(ii)", p, 1.5*N+W);

            draw((xx-d)--xx--(xx+eps*(1, 0)), linewidth(1.1), Arrows);
            dot(xx);
            label("(iv)", xx-d, 1.5*N+W);
            label("(iii)", xx+eps*(1,0), N+W);
        \end{asy}
        \caption{$D(\subdiff f^*)(z|\zeta)$}
    \end{subfigure}
    \hfill
    \begin{subfigure}{0.30\textwidth}
        \centering
        % \widetilde DF
        \begin{asy}
            unitsize(45, 45);
            real l=1.7;
            real eps=0.25;
            real c=1;
            pair xx=(0, c);
            pair gfstart=xx+l*(-.7, -.7);
            pair gfend=(l, c);
            path gf=gfstart--xx--gfend;
            draw(gf, dashed, Arrows);

            draw((0.5*l-eps, c)--(0.5*l+eps, c), linewidth(1.1), Arrows);
            dot((0.5*l, c));
            //label("(i)", (0.5*l, c), 1.5*N);

            pair p=(xx+gfstart)/2;
            pair d=eps/sqrt(2)*(1,1);
            draw((p-d)--(p+d), linewidth(1.1), Arrows);
            dot(p);
            //label("(ii)", p, 1.5*N+W);

            fill((xx-d)--xx--(xx+eps*(1, 0))..controls(xx+(-d+eps*(1, 0)))..cycle, gray(.8));
            draw((xx-d)--xx--(xx+eps*(1, 0)), linewidth(1.1), Arrows);
            dot(xx);
            //label("(iv)", xx-d, 1.5*N+W);
            // alignment
            label("$\phantom{(iii)}$", xx+eps*(1,0), N+W);
        \end{asy}
        \caption{$\widetilde{D(\subdiff f^*)}(z|\zeta)$}
    \end{subfigure}
    \hfill
    \begin{subfigure}{0.30\textwidth}
        \centering
        % \hat D^* F
        \begin{asy}
            unitsize(45, 45);
            real l=1.7;
            real eps=0.25;
            real c=1;
            pair xx=(0, c);
            pair gfstart=xx+l*(-.7, -.7);
            pair gfend=(l, c);
            path gf=gfstart--xx--gfend;
            draw(gf, dashed, Arrows);

            draw((0.5*l, c-eps)--(0.5*l, c+eps), linewidth(1.1), Arrows);
            dot((0.5*l, c));
            //label("(i)", (0.5*l, c), 1.5*N);

            pair p=(xx+gfstart)/2;
            pair d=eps/sqrt(2)*(1,-1);
            draw((p-d)--(p+d), linewidth(1.1), Arrows);
            dot(p);
            //label("(ii)", p, 1.5*N+W);

            fill((xx-d)--xx--(xx+eps*(0, 1))..controls(xx+(-d+eps*(0, 1))/1.5)..cycle, gray(.8));
            draw((xx-d)--xx--(xx+eps*(0, 1)), linewidth(1.1), Arrows);
            dot(xx);
            //label("(iv)", xx-d, 1.5*N+W);
            // alignment
            label("$\phantom{(iii)}$", xx+eps*(1,0), N+W);
        \end{asy}
        \caption{$\widehat D^*(\subdiff f^*)(z|\zeta)$}
    \end{subfigure}
    \caption{Illustration of the graphical derivative and Fréchet coderivative for $\subdiff f^*$ as in \eqref{eq:state-constraints-fstar}. The dashed line is $\graph \subdiff f$. The dots indicate the base points $(z, \zeta)$ where the graphical derivative or coderivative is calculated, and the thick arrows and gray areas indicate the directions of $(\dir z, \dir \zeta)$ relative to the base point.
        The labels (i) etc. denote the corresponding case of \eqref{eq:d-subdiff-state-constr-1d}.
    }
    \label{fig:state-constr-1d}
\end{figure}

Since $f$ is proper, convex, and normal, so is $f^*$; see, e.g., \cite[Thm.~14.50]{Rockafellar:1998} for the former. Furthermore, for almost every $x\in\Omega$, the functional $f^*(x,\cdot)$ is piecewise affine, and hence $\partial f^*(x,\cdot)$ is proto-differentiable; see \cite[Prop.~13.9, Thm.~13.40]{Rockafellar:1998}. We can thus apply \cite[Cor.~2.7]{ClasonValkonen15} to obtain the following pointwise characterization of the second-order generalized derivatives of the corresponding integral functional $F^*$.
\begin{corollary}
    \label{cor:g-form-state-constr}
    Let $\partial f^*$ be as in \eqref{eq:state-constraints-fstar}, and
    \[
        \partial F^*(\Dual) \defeq \left\{\eta\in L^2(\Omega)\mid \eta(x) \in \partial f^*(\Dual(x))\text{ for a.e. }x\in\Omega \right\}.
    \]
    Suppose $\alpha v(x) \ne c-\target(x)$ for a.e.~$x \in \Omega$.
    Then
    \begin{align}
        \label{eq:g-form-state-constr-dconv}
        \widetilde{D[\subdiff F^*]}(\Dual|\coDual)(\dir\Dual)&=
        \begin{cases}
            T_{\Dual}\dir{\Dual}+\DerivConeF[\Dual|\coDual]^\circ, & \dir\Dual \in \DerivConeF[\Dual|\coDual] \text{ and } \coDual \in \subdiff F^*(\Dual),\\
            \emptyset, & \text{otherwise},
        \end{cases}
        \intertext{and}
        \label{eq:g-form-state-constr-cod}
        \frechetCod{[\subdiff F^*]}(\Dual|\coDual)(\dir\coDual)&=
        \begin{cases}
            T_{\Dual}^*\dir{\coDual}+{\DerivConeF[\Dual|\coDual]}^\circ, & -\dir\coDual \in \DerivConeF[\Dual|\coDual] \text{ and } \coDual \in \subdiff F^*(\Dual),\\
            \emptyset, & \text{otherwise},
        \end{cases}
    \end{align}
    for the cone
    \begin{align}
        \DerivConeF[\Dual|\coDual] &= L^2(\Omega),
        \intertext{its polar}
        \polar{\DerivConeF[\Dual|\coDual]}&= \{0\} \subset L^2(\Omega),
    \end{align}
    and the linear operator $T_{\Dual}$ defined by
    \[
        [T_{\Dual}\dir{\Dual}](x) \defeq t_\Dual(x) \dir{\Dual}(x),
        \qquad
        t_\Dual(x)
        \defeq
        \begin{cases}
            0, & \alpha v(x)>c-\target(x), \\
            \alpha, & \alpha v(x)<c-\target(x).
        \end{cases}
    \]    
\end{corollary}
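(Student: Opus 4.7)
The plan is to apply \cite[Cor.~2.7]{ClasonValkonen15} in order to lift the pointwise characterization of $\widetilde{D(\subdiff f^*)}$ and the corresponding Fréchet coderivative, established in the preceding lemma, to the integral functional $F^*$. First I would verify the hypotheses of that result. Since $\target \in L^2(\Omega)$ is measurable and $f(x,v) = \tfrac{1}{2\alpha}|v-\target(x)|^2 + \iota_{(-\infty,c]}(v)$ is a proper, convex, and normal integrand, its conjugate $f^*$ is also a proper, convex, and normal integrand by \cite[Thm.~14.50]{Rockafellar:1998}. Moreover, for a.e.\ $x \in \Omega$ the function $f^*(x,\cdot)$ given in \eqref{eq:state-constraints-fstar} is piecewise quadratic, so by \cite[Prop.~13.9, Thm.~13.40]{Rockafellar:1998}, $\subdiff f^*(x,\cdot)$ is proto-differentiable, which is what the cited corollary requires.

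Next, under the strict complementarity assumption $\alpha v(x) \ne c - \target(x)$ a.e., only the first two branches of \eqref{eq:d-subdiff-state-constr-1d-conv} are activated in the pointwise formula. In both of these cases, $\widetilde{D(\subdiff f^*)}(v(x)|\coDual(x))(\dir\Dual(x)) = t_\Dual(x)\dir\Dual(x)$ is single-valued, with $t_\Dual(x) \in \{0,\alpha\}$ as defined in the statement; in particular, the pointwise domain is all of $\R$ and the pointwise image is a singleton. Since $|t_\Dual(x)| \le \alpha$ uniformly, the multiplication operator $T_\Dual: L^2(\Omega) \to L^2(\Omega)$ is bounded and self-adjoint. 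Assembling these pointwise facts via \cite[Cor.~2.7]{ClasonValkonen15} then yields \eqref{eq:g-form-state-constr-dconv} with $\DerivConeF[\Dual|\coDual] = L^2(\Omega)$ and $\polar{\DerivConeF[\Dual|\coDual]} = \{0\}$, provided $\coDual \in \subdiff F^*(\Dual)$; otherwise the pointwise membership $\coDual(x) \in \subdiff f^*(v(x))$ fails on a set of positive measure, and the set in \eqref{eq:g-form-state-constr-dconv} is empty.

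For the Fréchet coderivative formula \eqref{eq:g-form-state-constr-cod}, the same application of \cite[Cor.~2.7]{ClasonValkonen15} gives the pointwise reduction to $\frechetCod{(\subdiff f^*)}(v(x)|\coDual(x))$. Since in the single-valued cases the graphical derivative is a linear map $\dir\Dual \mapsto t_\Dual(x)\dir\Dual$, the corresponding Fréchet coderivative is the adjoint $\dir\coDual \mapsto t_\Dual(x)\dir\coDual$ (with the sign convention of \eqref{eq:frechetcod} accounting for the $-\dir\coDual$ in the normal cone characterization, which yields the requirement $-\dir\coDual \in \DerivConeF[\Dual|\coDual]$); assembling these pointwise data yields the stated form with $T_\Dual^* = T_\Dual$.

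I expect the proof to be essentially a routine verification once the preceding lemma on $\widetilde{D(\subdiff f^*)}$ is in place, with the only minor technical point being the careful translation of the almost-everywhere pointwise single-valuedness under strict complementarity into the global statement $\DerivConeF[\Dual|\coDual] = L^2(\Omega)$, $\polar{\DerivConeF[\Dual|\coDual]} = \{0\}$. The principal obstacle, had one wished to drop strict complementarity, would be handling the third and fourth branches of \eqref{eq:d-subdiff-state-constr-1d-conv} where the pointwise graphical derivative is a genuine cone-shaped set; the strict complementarity assumption circumvents this entirely.
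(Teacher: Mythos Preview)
Your proposal is correct and follows essentially the same route as the paper: verify that $f^*$ is a proper, convex, normal integrand with proto-differentiable subdifferential (via the same Rockafellar references), then invoke \cite[Cor.~2.7]{ClasonValkonen15} to lift the pointwise computation from the preceding lemma to the integral functional, with strict complementarity eliminating the degenerate branches. Your description is in fact slightly more careful than the paper's, which calls $f^*(x,\cdot)$ ``piecewise affine'' when it is piecewise quadratic (it is $\partial f^*(x,\cdot)$ that is piecewise affine); your phrasing is the accurate one, and either way the cited Rockafellar results apply.
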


\begin{remark}
    We have excluded the case $\alpha v(x) = c-\target(x)$---which amounts to a strict complementarity assumption for $v$---because the calculations of \cite{ClasonValkonen15} only apply when the polarity relationships in \eqref{eq:g-form-state-constr-dconv} and \eqref{eq:g-form-state-constr-cod} regarding $V$\/hold.
    We have verified that the calculations could be improved to handle this non-strictly complementary case. However, since non-strictly complementary solutions can be replaced by strictly complementary solutions by infinitesimal modifications of $v$, we have decided for conciness to simply exclude the case.
\end{remark}

\section*{Acknowledgments}

While TV was in Cambridge, he was supported by the King Abdullah University of Science and Technology (KAUST) Award No.~KUK-I1-007-43, and EPSRC grants Nr.~EP/J009539/1 ``Sparse \& Higher-order Image Restoration'' and Nr.~EP/M00483X/1 ``Efficient computational tools for inverse imaging problems''. Part of this work was also done while TV was in Quito, where he was supported by a Prometeo scholarship of the Senescyt (Ecuadorian Ministry of Science, Technology, Education, and Innovation).
CC is supported by the German Science Foundation DFG under grant Cl~487/1-1.

\section*{\texorpdfstring{\normalsize}{}A data statement for the EPSRC}

{\small
    There is no additional data supporting this publication. All source codes used to generate the results in \cref{sec:results} are archived at \url{http://dx.doi.org/10.5281/zenodo.398822}.
}

\printbibliography

\end{document}